\documentclass{amsart}
\pdfoutput=1

% Packages
\usepackage{amssymb}
\usepackage{microtype}
\usepackage[numbers]{natbib}
\usepackage{enumerate}
\usepackage[all,tips]{xy}
\SelectTips{cm}{10}
\usepackage{aliascnt}

\usepackage{hyperref}
\hypersetup{
  %     colorlinks=true,        % false: boxed links; true: colored links
  %     linkcolor=red,          % color of internal links
  %     citecolor=blue,         % color of links to bibliography
  %     filecolor=magenta,      % color of file links
  %     urlcolor=cyan           % color of external links
  pdftitle={Geometric variational crimes: Hilbert complexes, finite
    element exterior calculus, and problems on hypersurfaces},
  pdfauthor={Michael Holst and Ari Stern},
  pdfsubject={MSC 2010: 65N30, 58A12},
  bookmarksopen=false
}

        \theoremstyle{plain} %--default
        \newtheorem{theorem}             {Theorem}  [section]
        \newaliascnt{lemma}{theorem}
        \newtheorem{lemma}      [lemma]{Lemma}
        \aliascntresetthe{lemma}
        \newaliascnt{corollary}{theorem}
        \newtheorem{corollary}  [corollary]{Corollary}
        \aliascntresetthe{corollary}

        \theoremstyle{definition}
        \newaliascnt{definition}{theorem}
        \newtheorem{definition} [definition]{Definition}
        \aliascntresetthe{definition}
        
        \newtheorem{example}    [theorem]{Example}

        \theoremstyle{remark}
        \newtheorem{remark}              {Remark}

\begin{document}

\title[Geometric variational crimes]{Geometric variational
  crimes:\\Hilbert complexes, finite element exterior calculus, and
  problems on hypersurfaces}

\author{Michael Holst}
\address{Department of Mathematics\\
University of California, San Diego\\
9500 Gilman Dr \#0112\\
La Jolla CA 92093-0112}
\email{\{mholst,astern\}@math.ucsd.edu}

\author{Ari Stern}

\subjclass[2010]{Primary: 65N30, 58A12}

\begin{abstract}
  A recent paper of Arnold, Falk, and Winther
  [\emph{Bull. Amer. Math. Soc.} \textbf{47} (2010), 281--354] showed
  that a large class of mixed finite element methods can be formulated
  naturally on Hilbert complexes, where using a Galerkin-like
  approach, one solves a variational problem on a finite-dimensional
  subcomplex.  In a seemingly unrelated research direction, Dziuk
  [\emph{Lecture Notes in Math.}, vol.~1357 (1988), 142--155] analyzed
  a class of nodal finite elements for the Laplace--Beltrami equation
  on smooth $2$-surfaces approximated by a piecewise-linear
  triangulation; Demlow later extended this analysis [\emph{SIAM
    J. Numer. Anal.}, \textbf{47} (2009), 805--827] to $3$-surfaces,
  as well as to higher-order surface approximation.  In this article,
  we bring these lines of research together, first developing a
  framework for the analysis of variational crimes in abstract Hilbert
  complexes, and then applying this abstract framework to the setting
  of finite element exterior calculus on hypersurfaces.  Our framework
  extends the work of Arnold, Falk, and Winther to problems that
  violate their subcomplex assumption, allowing for the extension of
  finite element exterior calculus to approximate domains, most
  notably the Hodge--de~Rham complex on approximate manifolds.  As an
  application of the latter, we recover Dziuk's and Demlow's \emph{a
    priori} estimates for $2$- and $3$-surfaces, demonstrating that
  surface finite element methods can be analyzed completely within
  this abstract framework.  Moreover, our results generalize these
  earlier estimates dramatically, extending them from nodal finite
  elements for Laplace--Beltrami to mixed finite elements for the
  Hodge Laplacian, and from $2$- and $3$-dimensional hypersurfaces to
  those of arbitrary dimension.  By developing this analytical
  framework using a combination of general tools from differential
  geometry and functional analysis, we are led to a more geometric
  analysis of surface finite element methods, whereby the main results
  become more transparent.
\end{abstract}

%\date{September 13, 2011}

\maketitle

\clearpage

\tableofcontents

\section{Introduction}
\label{sec:intro}

The aim of this paper is to bring together three distinct ideas that
have influenced, in separate ways, the development and analysis of
geometric finite element methods for elliptic partial differential
equations.

The first idea is that of a \emph{variational crime}.  Suppose we have
a variational problem of the form: Find $ u \in V $ such that
\begin{equation}
  \label{eqn:variationalProblem}
  B \left( u, v \right) = F (v) , \quad \forall v \in V ,
\end{equation}
where $V$ is a Hilbert space, $ B \colon V \times V \rightarrow
\mathbb{R} $ is a bounded, coercive bilinear form, and $ F \in V ^\ast
$ is a bounded linear functional.  If $ V _h \subset V $ is a subspace
(usually finite-dimensional), then one can obtain an approximate
solution by solving the Galerkin variational problem: Find $ u _h \in
V _h $ such that
\begin{equation*}
  %\label{eqn:galerkin}
  B \left( u _h , v \right) = F (v) , \quad \forall v \in V _h .
\end{equation*}
This is the typical abstract setting for finite element methods.
However, for many problems of interest, especially finite element
methods on surfaces or on domains with curved boundaries, one cannot
efficiently compute the bilinear form $ B \left( \cdot , \cdot \right)
$ or the functional $ F \left( \cdot \right) $ on a subspace of $V$.
Instead, one must take an approximating space $ V _h \not\subset V $,
along with an approximate bilinear form $ B _h \colon V _h \times V _h
\rightarrow \mathbb{R} $ and functional $ F _h \in V _h ^\ast $, and
formulate the generalized Galerkin variational problem: Find $ u _h
\in V _h $ such that
\begin{equation}
  \label{eqn:generalizedGalerkin}
  B _h \left( u _h , v \right) = F _h (v) , \quad \forall v \in V _h .
\end{equation}
Such modifications to the original variational problem are called
``variational crimes.''  There is a well-understood framework for the
analysis of a large class of variational crimes, represented by the
\emph{Strang lemmas} \citep{Braess1997}: for instance, the first and
second Strang lemmas allow for the complete analysis of numerical
quadrature, the use of geometric modeling technology such as
isoparametric elements, and many other examples of variational crimes.

The emergence of \emph{surface finite elements} represents a second
distinct idea that has influenced the development of geometric finite
element methods.  The analysis of surface finite element methods,
which by construction are ``criminal'' methods, has required a more
sophisticated approach that exploits the specific nature of the crime
in order to obtain a satisfactory error analysis; this custom-tailored
analysis contrasts with the more general approach given by the Strang
lemmas.  The surface finite element research area was effectively
initiated with the 1988 article of \citet{Dziuk1988}, although there
is related work appearing about ten years earlier by
\citet{Nedelec1976}.  While there was some activity in the area during
the 1990s (cf.~\citep{Dziuk1991,DeDz1995}), beginning in 2001 there
was a tremendous expansion of research in the general area of surface
finite element methods, with many applications arising in material
science, biology, and astrophysics; examples
include~\citep{Holst2001,Christiansen2002,DeDz2003,DeDzEl2005,DzHu2006,DzEl2007,DeDz2007,Demlow2009}.

The third distinct idea that has had a major influence on the
development of geometric methods is that of \emph{mixed finite
  elements}, whose early success in areas such as computational
electromagnetics was later found to have surprising connections with
the calculus of exterior differential forms, including de~Rham
cohomology and Hodge
theory~\citep{Bossavit1988,Nedelec1980,Nedelec1986,GrKo2004}.  This
has culminated, very recently, in the powerful theory of \emph{finite
  element exterior calculus} developed
by~\citet{ArFaWi2006,ArFaWi2010}.  A key insight of the latter work,
from a functional-analytic point of view, is that a mixed variational
problem can be posed on a \emph{Hilbert complex}: a differential
complex of Hilbert spaces, in the sense of \citet{BrLe1992}.
Galerkin-type mixed methods are then obtained by solving the
variational problem on a finite-dimensional subcomplex.

In this article, we bring these lines of research together, first
developing a framework for the analysis of variational crimes in
abstract Hilbert complexes, and then applying this abstract framework
to the setting of finite element exterior calculus on hypersurfaces.
Our framework extends the work of \citet{ArFaWi2010} to problems that
violate their subcomplex assumption, allowing for the extension of
finite element exterior calculus to approximate domains, most notably
the Hodge--de~Rham complex on approximate manifolds.  As an
application of the latter, we recover Dziuk's \citep{Dziuk1988} and
Demlow's \citep{Demlow2009} \emph{a priori} estimates for $2$- and
$3$-surfaces, demonstrating that surface finite element methods can be
analyzed completely within this abstract framework.  Moreover, our
results generalize these earlier estimates dramatically, extending
them from nodal finite elements for Laplace--Beltrami to mixed finite
elements for the Hodge Laplacian, and from $2$- and $3$-dimensional
hypersurfaces to those of arbitrary dimension.  By developing this
analytical framework using a combination of general tools from
differential geometry and functional analysis, we are led to a more
geometric analysis of surface finite element methods, whereby the main
results become more transparent.

The remainder of the article is organized as follows.  In
\autoref{sec:hilbert}, we review the abstract framework of Hilbert
complexes, which plays a central role in the work of
\citet{ArFaWi2010} on finite element exterior calculus.  This includes
a brief introduction to Hilbert complexes and their morphisms, domain
complexes, Hodge decomposition, the Poincar\'{e} inequality, the Hodge
Laplacian, mixed variational problems, and approximation using Hilbert
subcomplexes.  In \autoref{sec:crimes}, we consider the approximation
of a Hilbert complex by a second complex, related to the first complex
through an injective morphism rather than through subcomplex
inclusion.  Since this morphism is not necessarily unitary (i.e.,
inner-product preserving), this allows the approximating complex to
have a different inner product, which only approximates that of the
original complex.  We develop some basic results for the pair of
complexes and the maps between them, and then prove error estimates
for generalized Galerkin-type approximations of solutions to
variational problems using the approximating complex; these estimates
generalize the results of \citet{ArFaWi2010} to ``external''
approximations.  Our results may be viewed as establishing
\emph{Strang-type lemmas} for approximating variational problems in
Hilbert complexes.  Finally, in \autoref{sec:diffForms}, we apply the
framework developed in \autoref{sec:crimes} to the Hodge--de~Rham
complex of differential forms on a compact, oriented Riemannian
manifold.  We first review Hodge--de~Rham theory, and then consider a
pair of Riemannian manifolds related by diffeomorphisms, establishing
estimates for the maps needed to apply the generalized Hilbert complex
approximation framework.  After reviewing the concept of a tubular
neighborhood, we then consider the specific case of Euclidean
hypersurfaces.  We subsequently show how the results of the previous
sections recover the analysis framework and \emph{a priori} estimates
of \citet{Dziuk1988,DeDz2007,Demlow2009}, and moreover extend their
results from scalar functions on 2- and 3-surfaces to general
$k$-forms on arbitrary dimensional hypersurfaces.  We also indicate
how our results generalize the \emph{a priori} estimates of
\citet{Dziuk1988,Demlow2009} from nodal finite element methods for the
Laplace--Beltrami operator to mixed finite element methods for the
Hodge Laplacian.

\section{Review of Hilbert complexes}
\label{sec:hilbert}

In this section, we quickly review the abstract framework of Hilbert
complexes, which forms the heart of the analysis in \citet{ArFaWi2010}
for mixed finite element methods.  Just as the space of $ L ^2 $
functions is a prototypical example of a Hilbert space, the
prototypical example of a Hilbert complex to keep in mind is the $ L
^2 $-de~Rham complex of differential forms.  (This example will be
discussed at greater length in \autoref{sec:diffForms}.)  After
stating the basic definitions, we will summarize some of the key
results from \citet{ArFaWi2010} on mixed variational problems and
their numerical approximation using Hilbert subcomplexes. The
interested reader may also refer to \citet{BrLe1992} for a
comprehensive treatment of Hilbert complexes from the viewpoint of
functional analysis.

\subsection{Basic definitions} Let us introduce the basic objects of
study, Hilbert complexes, and their morphisms.

\begin{definition}
  A \emph{Hilbert complex} $ \left( W , \mathrm{d} \right) $ consists
  of a sequence of Hilbert spaces $ W ^k $, along with closed,
  densely-defined linear maps $ \mathrm{d} ^k \colon V ^k \subset W ^k
  \rightarrow V ^{ k + 1 } \subset W ^{ k + 1 } $, possibly unbounded,
  such that $ \mathrm{d} ^k \circ \mathrm{d} ^{k-1} = 0 $ for each
  $k$.
  \begin{equation*}
    \xymatrix{  \cdots \ar[r] & V ^{ k - 1 } \ar[r]^-{ \mathrm{d} ^{k-1} }
      & V ^k \ar[r] ^-{ \mathrm{d} ^k } & V ^{ k + 1 } \ar[r] & \cdots }
  \end{equation*}
  This Hilbert complex is said to be \emph{bounded} if $ \mathrm{d} ^k
  $ is a bounded linear map from $ W ^k $ to $ W ^{ k + 1 } $ for each
  $k$, i.e., $ \left( W , \mathrm{d} \right) $ is a cochain complex in
  the category of Hilbert spaces.  It is said to be \emph{closed} if
  the image $ \mathrm{d} ^k V ^k $ is closed in $ W ^{ k + 1 } $ for
  each $k$.
\end{definition}

\begin{definition}
  \label{def:morphism}
  Given two Hilbert complexes, $ \left( W, \mathrm{d} \right) $ and $
  \left( W ^\prime , \mathrm{d} ^\prime \right) $, a \emph{morphism of
    Hilbert complexes} $ f \colon W \rightarrow W ^\prime $ consists
  of a sequence of bounded linear maps $ f ^k \colon W ^k \rightarrow
  W ^{\prime k} $ such that $ f ^k V ^k \subset V ^{ \prime k } $ and
  $ \mathrm{d} ^{\prime k} f ^k = f ^{ k + 1 } \mathrm{d} ^k $ for
  each $k$.  That is, the following diagram commutes:
  \begin{equation*}
    \xymatrix@=3em{
      \cdots \ar[r] & V ^k \ar[d]^{f ^k } \ar[r]^{\mathrm{d}^k} &
      V ^{k+1} \ar[d]^{f ^{k+1}} \ar[r] & \cdots \\
      \cdots \ar[r] & V ^{\prime k} \ar[r]^{\mathrm{d} ^{\prime k}}
      & V ^{\prime k+1} \ar[r] & \cdots 
    }
  \end{equation*}
\end{definition}

By analogy with cochain complexes, it is possible to define notions of
cocycles, coboundaries, harmonic forms, and cohomology spaces for
Hilbert complexes.

\begin{definition}
  Given a Hilbert complex $ \left( W, \mathrm{d} \right) $, the space
  of \emph{$k$-cocycles} is the kernel $ \mathfrak{Z} ^k = \ker
  \mathrm{d} ^k $, the space of \emph{$k$-coboundaries} is the image $
  \mathfrak{B} ^k = \mathrm{d} ^{ k - 1 } V ^{ k - 1 } $, the {\em
    $k$th harmonic space} is the intersection $ \mathfrak{H} ^k =
  \mathfrak{Z} ^k \cap \mathfrak{B} ^{k \perp _W} $, and the
  \emph{$k$th reduced cohomology space} is the quotient $ \mathfrak{Z}
  ^k / \overline{ \mathfrak{B} ^k } $.  When $ \mathfrak{B} ^k $ is
  closed, $ \mathfrak{Z} ^k / \mathfrak{B} ^k $ is simply called the
  \emph{$k$th cohomology space}, and is identical to reduced
  cohomology.
\end{definition}

\begin{remark}
  One can show that the harmonic space $ \mathfrak{H} ^k $ is
  isomorphic to the reduced cohomology space $ \mathfrak{Z} ^k /
  \overline{ \mathfrak{B} ^k } $.  For a closed complex, this is
  identical to the usual cohomology space $ \mathfrak{Z} ^k /
  \mathfrak{B} ^k $, since $ \mathfrak{B} ^k $ is closed for each $k$.
\end{remark}

\begin{definition}
  Given a morphism of Hilbert complexes $ f \colon W \rightarrow W
  ^\prime $, the \emph{induced map on (reduced) cohomology} is defined
  by $ [z] \mapsto [fz] $, where $[z]$ denotes the (reduced)
  cohomology class of the cocycle $z$.
\end{definition}

In general, the differentials $ \mathrm{d} ^k $ of a Hilbert complex
may be unbounded linear maps.  However, given an arbitrary Hilbert
complex $ \left( W, \mathrm{d} \right) $, it is always possible to
construct a bounded complex having the same domains and maps, as
follows.

\begin{definition}
  Given a Hilbert complex $ \left( W, \mathrm{d} \right) $, the {\em
    domain complex} $ \left( V, \mathrm{d} \right) $ consists of the
  domains $ V ^k \subset W ^k $, endowed with the graph inner product
\begin{equation*}
  \left\langle u, v \right\rangle _{ V ^k } = \left\langle u , v
  \right\rangle _{ W ^k } + {\langle \mathrm{d} ^k u, \mathrm{d}
    ^k v \rangle} _{ W ^{ k + 1 } } .
\end{equation*}
\end{definition}

\begin{remark}
  Since $ \mathrm{d} ^k $ is a closed map, each $ V ^k $ is closed
  with respect to the norm induced by the graph inner product.  Also,
  each map $ \mathrm{d} ^k $ is bounded, since
  \begin{equation*}
    {\lVert \mathrm{d} ^k v \rVert} _{V^{k+1}} = {\lVert \mathrm{d}
      ^k v \rVert} _{W^{k+1}} \leq \left\lVert v \right\rVert _{ W ^k
    } + {\lVert \mathrm{d} ^k v \rVert} _{W^{k+1}} = \left\lVert
      v \right\rVert _{ V ^k } .
  \end{equation*}
  Thus, the domain complex is a bounded Hilbert complex; moreover, it
  is a closed complex if and only if $ \left( W, \mathrm{d} \right) $
  is closed.
\end{remark}

For the remainder of the paper, we will follow the simplified notation
used by \citet{ArFaWi2010}: the $W$-inner product and norm will be
written simply as $ \left\langle \cdot , \cdot \right\rangle $ and $
\left\lVert \cdot \right\rVert $, without subscripts, while the
$V$-inner product and norm will be written explicitly as $
\left\langle \cdot , \cdot \right\rangle _V $ and $ \left\lVert \cdot
\right\rVert _V $.

\subsection{Hodge decomposition and Poincar\'e inequality}

The Helmholtz decomposition states that a rapidly-decaying vector
field on $ \mathbb{R}^3 $ can be decomposed into curl-free and
divergence-free components, i.e., the vector field can be written as
the sum of the gradient of a scalar potential and the curl of a vector
potential.  For differential forms, this is generalized by the Hodge
decomposition, which states that any differential form can be written
as a sum of exact, coexact, and harmonic components.  Here, we recall
an even further generalization of the Hodge decomposition to arbitrary
Hilbert complexes; this immediately gives rise to an abstract version
of the Poincar\'e inequality, which will be crucial to much of the
later analysis.

Following \citet{BrLe1992}, we can decompose each space $ W ^k $ in
terms of orthogonal subspaces,
\begin{equation*}
  W ^k = \mathfrak{Z}  ^k \oplus  \mathfrak{Z}  ^{ k \perp _W } =
  \mathfrak{Z}  ^k \cap \bigl( \overline{ \mathfrak{B}  ^k } \oplus
  \mathfrak{B} ^{k \perp _W } \bigr) \oplus \mathfrak{Z}  ^{ k \perp _W } =
  \overline{ \mathfrak{B}  ^k } \oplus \mathfrak{H}  ^k \oplus
  \mathfrak{Z}  ^{ k \perp _W } ,
\end{equation*}
where the final expression is known as the \emph{weak Hodge
  decomposition}.  
For the domain complex $ \left( V,
  \mathrm{d} \right) $, the spaces $ \mathfrak{Z} ^k $, $ \mathfrak{B}
^k $, and $ \mathfrak{H} ^k $ are the same as for $ \left( W ,
  \mathrm{d} \right) $, and consequently we get the decomposition
\begin{equation*}
  V ^k = \overline{ \mathfrak{B}  ^k } \oplus \mathfrak{H}  ^k \oplus
  \mathfrak{Z} ^{ k \perp _V } ,
\end{equation*}
where $ \mathfrak{Z} ^{ k \perp _V } = \mathfrak{Z} ^{ k \perp _W }
\cap V ^k $.  In particular, if $ \left( W, \mathrm{d} \right) $ is a
closed Hilbert complex, then the image $ \mathfrak{B} ^k $ is a closed
subspace, so we have the \emph{strong Hodge decomposition}
\begin{equation*}
  W ^k = \mathfrak{B}  ^k \oplus \mathfrak{H}  ^k \oplus
  \mathfrak{Z}  ^{ k \perp _W } ,
\end{equation*}
and likewise for the domain complex,
\begin{equation*}
  V ^k = \mathfrak{B}  ^k \oplus \mathfrak{H}  ^k \oplus
  \mathfrak{Z} ^{ k \perp _V } .
\end{equation*}
From here on, following the notation of \citet{ArFaWi2010}, we will
simply write $ \mathfrak{Z} ^{ k \perp} $ in place of $ \mathfrak{Z}
^{ k \perp _V } $ when there can be no confusion.

\begin{lemma}[Abstract Poincar\'e Inequality]
  \label{lem:poincareInequality}
  If $ \left( V, \mathrm{d} \right) $ is a bounded, closed Hilbert
  complex, then there exists a constant $ c _P $ such that
  \begin{equation*}
    \left\lVert v \right\rVert _V \leq c _P {\lVert \mathrm{d} ^k v
      \rVert} _V , \quad \forall v \in \mathfrak{Z}  ^{ k \perp } .
  \end{equation*}
\end{lemma}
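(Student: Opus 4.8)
The plan is to argue by contradiction, using the closedness of the complex together with a compactness-free functional-analytic argument based on the closed range theorem. First I would restrict attention to the map $\mathrm{d}^k$ acting on $\mathfrak{Z}^{k\perp}$. Observe that $\mathrm{d}^k$ is injective on $\mathfrak{Z}^{k\perp}$: indeed, if $v \in \mathfrak{Z}^{k\perp}$ and $\mathrm{d}^k v = 0$, then $v \in \mathfrak{Z}^k \cap \mathfrak{Z}^{k\perp} = \{0\}$. Moreover, since the complex is closed, the image $\mathrm{d}^k V^k = \mathrm{d}^k \mathfrak{Z}^{k\perp} = \mathfrak{B}^{k+1}$ is a closed subspace of $V^{k+1}$, hence itself a Hilbert space. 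The key point is that $\mathrm{d}^k \colon \mathfrak{Z}^{k\perp} \to \mathfrak{B}^{k+1}$ is a bounded (the complex $(V,\mathrm{d})$ is bounded), bijective linear map between Hilbert spaces.

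The main step is then to invoke the bounded inverse theorem (open mapping theorem): a bounded bijection between Banach spaces has a bounded inverse. Applying this to $\mathrm{d}^k \colon \mathfrak{Z}^{k\perp} \to \mathfrak{B}^{k+1}$, there is a constant $c_P$ such that $\lVert v \rVert_V \leq c_P \lVert \mathrm{d}^k v \rVert_V$ for all $v \in \mathfrak{Z}^{k\perp}$, which is exactly the claimed inequality. One has to be slightly careful that $\mathrm{d}^k v$ is measured in the $V^{k+1}$-norm, but since $(V,\mathrm{d})$ is the bounded domain complex, $\lVert \mathrm{d}^k v \rVert_V = \lVert \mathrm{d}^k v \rVert_{W^{k+1}}$ (as $\mathrm{d}^{k+1}\mathrm{d}^k v = 0$), so no difficulty arises there; one could equally well phrase the conclusion with the $W$-norm on the right-hand side.

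The potential obstacle is verifying the hypotheses of the open mapping theorem cleanly — specifically, that $\mathfrak{Z}^{k\perp}$ and $\mathfrak{B}^{k+1}$ are genuinely complete in the relevant norms. Completeness of $\mathfrak{Z}^{k\perp}$ follows since it is a closed subspace of the Hilbert space $V^k$ (it is the orthogonal complement of the closed subspace $\mathfrak{Z}^k$, which is closed because $\mathrm{d}^k$ is a closed map). Completeness of $\mathfrak{B}^{k+1} = \mathrm{d}^k V^k$ is precisely the closedness hypothesis on the complex. So these are immediate given what we have assumed, and no real obstacle remains. An alternative route avoiding the open mapping theorem is a direct contradiction argument: if no such $c_P$ existed, pick $v_n \in \mathfrak{Z}^{k\perp}$ with $\lVert v_n \rVert_V = 1$ and $\lVert \mathrm{d}^k v_n \rVert_V \to 0$; then $\{v_n\}$ is bounded in the Hilbert space $V^k$, so a subsequence converges weakly to some $v$, and one shows $v \in \mathfrak{Z}^{k\perp}$ with $\mathrm{d}^k v = 0$, forcing $v = 0$ — but establishing strong convergence to get $\lVert v \rVert_V = 1$ then requires an additional compactness input (e.g., via the closed range / Banach theorem again), so the open mapping approach is the most economical and is the one I would present.
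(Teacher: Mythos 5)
Your proposal is correct and follows essentially the same route as the paper: the paper's proof likewise observes that $\mathrm{d}^k$ is a bounded bijection from $\mathfrak{Z}^{k\perp}$ onto the closed subspace $\mathfrak{B}^{k+1}$ and applies the bounded inverse theorem. Your additional checks (injectivity on $\mathfrak{Z}^{k\perp}$, completeness of both subspaces, and the identification of the norms of $\mathrm{d}^k v$) simply make explicit what the paper leaves implicit.
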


\begin{proof}
  The map $ \mathrm{d} ^k $ is a bounded bijection from $ \mathfrak{Z}
  ^{ k \perp } $ to $ \mathfrak{B} ^{ k + 1 } $, which are both closed
  subspaces, so the result follows immediately by applying the bounded
  inverse theorem.
\end{proof}

\begin{corollary}
  If $ \left( V, \mathrm{d} \right) $ is the domain complex of a
  closed Hilbert complex $ \left( W, \mathrm{d} \right) $, then
  \begin{equation*}
    \left\lVert v \right\rVert _V \leq c _P {\lVert \mathrm{d} ^k v
      \rVert} , \quad \forall v \in \mathfrak{Z}  ^{ k \perp } .
  \end{equation*}
\end{corollary}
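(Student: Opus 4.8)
The plan is to invoke the Abstract Poincar\'e Inequality (\autoref{lem:poincareInequality}) and then merely to simplify its right-hand side. First I would observe that the hypotheses of that lemma are met: the domain complex $ \left( V , \mathrm{d} \right) $ is always bounded (as recorded in the remark following the definition of the domain complex), and it is closed precisely because $ \left( W, \mathrm{d} \right) $ is assumed closed. Hence there is a constant $ c _P $ with
\begin{equation*}
  \left\lVert v \right\rVert _V \leq c _P {\lVert \mathrm{d} ^k v \rVert} _V , \quad \forall v \in \mathfrak{Z} ^{ k \perp } .
\end{equation*}

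The only thing left is to replace ${\lVert \mathrm{d} ^k v \rVert} _V$ by the $W$-norm ${\lVert \mathrm{d} ^k v \rVert}$. The key point is that $ \mathrm{d} ^k v $ is itself a cocycle: since $ \mathrm{d} ^{ k + 1 } \circ \mathrm{d} ^k = 0 $, we have $ \mathrm{d} ^{ k + 1 } \left( \mathrm{d} ^k v \right) = 0 $, so the graph inner product collapses,
\begin{equation*}
  {\lVert \mathrm{d} ^k v \rVert} _V ^2 = {\lVert \mathrm{d} ^k v \rVert} ^2 + {\lVert \mathrm{d} ^{ k + 1 } \mathrm{d} ^k v \rVert} ^2 = {\lVert \mathrm{d} ^k v \rVert} ^2 .
\end{equation*}
Substituting this identity into the displayed inequality above gives $ \left\lVert v \right\rVert _V \leq c _P {\lVert \mathrm{d} ^k v \rVert} $ for all $ v \in \mathfrak{Z} ^{ k \perp } $, which is exactly the asserted estimate.

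I do not anticipate any real obstacle here, as the statement is a direct consequence of \autoref{lem:poincareInequality}. The only thing requiring a little care is the bookkeeping between the two inner products, together with the convention (adopted just before the lemma) that $ \mathfrak{Z} ^{ k \perp } $ denotes $ \mathfrak{Z} ^{ k \perp _V } = \mathfrak{Z} ^{ k \perp _W } \cap V ^k $; once this is kept in mind the argument is immediate.
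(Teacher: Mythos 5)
Your proposal is correct and is exactly the argument the paper intends: the corollary follows by applying \autoref{lem:poincareInequality} to the (bounded, closed) domain complex and noting, as in the remark following the definition of the domain complex, that $ {\lVert \mathrm{d} ^k v \rVert} _V = {\lVert \mathrm{d} ^k v \rVert} $ since $ \mathrm{d} ^{ k + 1 } \mathrm{d} ^k v = 0 $. Nothing is missing.
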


We close this subsection by defining the dual complex of a Hilbert
complex, and recalling how the Hodge decomposition can be interpreted in
terms of this complex.

\begin{definition}
  Given a Hilbert complex $ \left( W, \mathrm{d} \right) $, the {\em
    dual complex} $ \left( W ^\ast , \mathrm{d} ^\ast \right) $
  consists of the spaces $ W _k ^\ast = W ^k $, and adjoint operators
  $ \mathrm{d} _k ^\ast = \left( \mathrm{d} ^{ k - 1 } \right) ^\ast
  \colon V _k ^\ast \subset W _k ^\ast \rightarrow V _{ k - 1 } ^\ast
  \subset W _{ k - 1 } ^\ast $.
\begin{equation*}
  \xymatrix{
    \cdots & \ar[l]  V _{k-1} ^\ast & \ar[l] _-{ \mathrm{d} _k ^\ast }
    V _k ^\ast & \ar[l] _-{ \mathrm{d} _{ k + 1 } ^\ast }  V _{ k
      + 1 } ^\ast & \ar[l] \cdots 
  } 
\end{equation*}
\end{definition}

\begin{remark}
  Since the arrows in the dual complex point in the opposite
  direction, this is a Hilbert chain complex rather than a cochain
  complex.  (The chain property $ \mathrm{d} _k ^\ast \circ \mathrm{d}
  _{ k + 1 } ^\ast = 0 $ follows easily from the cochain property $
  \mathrm{d} ^k \circ \mathrm{d} ^{ k - 1 } = 0 $.) Accordingly, we
  can define the \emph{$k$-cycles} $ \mathfrak{Z} _k ^\ast = \ker
  \mathrm{d} _k ^\ast = \mathfrak{B} ^{ k \perp _W } $ and \emph{$ k
    $-boundaries} $ \mathfrak{B} _k ^\ast = \mathrm{d} _{ k + 1 }
  ^\ast V ^\ast _k $.  The $k$th harmonic space can then be rewritten
  as $ \mathfrak{H} ^k = \mathfrak{Z} ^k \cap \mathfrak{Z} _k ^\ast $;
  we also have $ \mathfrak{Z} ^k = \mathfrak{B} _k ^{\smash{\ast \perp
      _W }} $, and thus $ \mathfrak{Z} ^{k \perp _W } = \overline{
    \mathfrak{B} _k ^\ast } $.  Therefore, the weak Hodge
  decomposition can be written as
  \begin{equation*}
    W ^k = \overline{ \mathfrak{B}  ^k } \oplus \mathfrak{H}  ^k \oplus
    \overline{ \mathfrak{B}  _k ^\ast } ,
  \end{equation*}
  and in particular, for a closed Hilbert complex, the strong Hodge
  decomposition now becomes
  \begin{equation*}
    W ^k = \mathfrak{B}  ^k \oplus \mathfrak{H}  ^k \oplus
    \mathfrak{B}  _k ^\ast .
  \end{equation*}
\end{remark}

\subsection{The abstract Hodge Laplacian and mixed variational
  problem}

To obtain a ``mixed version'' of the familiar Poisson equation $ -
\Delta u = f $ for scalar functions, we now follow \citet{ArFaWi2010}
in defining an abstract version of the Hodge Laplacian for Hilbert
complexes.  The \emph{abstract Hodge Laplacian} is the operator $ L =
\mathrm{d} \mathrm{d} ^\ast + \mathrm{d} ^\ast \mathrm{d} $, which is
an unbounded operator $ W ^k \rightarrow W ^k $ with domain
\begin{equation*}
  D _L = \left\{ u \in V ^k \cap V _k ^\ast \;\middle\vert\;
    \mathrm{d} u \in V _{ k + 1 } ^\ast ,\ \mathrm{d} ^\ast u \in V ^{
      k - 1 } \right\} .
\end{equation*}
If $u \in D _L $ solves $ L u = f $, then it satisfies the variational
principle
\begin{equation*}
  \left\langle \mathrm{d} u , \mathrm{d} v \right\rangle +
  \left\langle \mathrm{d} ^\ast u , \mathrm{d} ^\ast v \right\rangle =
  \left\langle f, v \right\rangle , \quad \forall v \in V ^k \cap V _k
  ^\ast .
\end{equation*}
However, as noted by \citet{ArFaWi2010}, there are some difficulties
in using this variational principle for a finite element
approximation.  First, it may be difficult to construct finite
elements for the space $ V ^k \cap V _k ^\ast $.  A second concern is
the well-posedness of the problem.  If we take any harmonic test
function $ v \in \mathfrak{H} ^k $, then the left-hand side vanishes,
so $ \left\langle f, v \right\rangle = 0 $; hence, a solution only
exists if $ f \perp \mathfrak{H} ^k $.  Furthermore, for any $ q \in
\mathfrak{H} ^k = \mathfrak{Z} ^k \cap \mathfrak{Z} _k ^\ast $, we
have $ \mathrm{d} q = 0 $ and $ \mathrm{d} ^\ast q = 0 $; therefore,
if $u$ is a solution, then so is $ u + q $.

To avoid these existence and uniqueness issues, one can define instead
the following mixed variational problem: Find $ \left( \sigma, u, p
\right) \in V ^{ k - 1 } \times V ^k \times \mathfrak{H} ^k $
satisfying
\begin{equation}
  \label{eqn:mixedProblem}
  \begin{alignedat}{2}
    \left\langle \sigma , \tau \right\rangle - \left\langle u ,
      \mathrm{d} \tau \right\rangle &= 0, &\quad \forall \tau &\in V
    ^{ k - 1 } ,\\
    \left\langle \mathrm{d} \sigma, v \right\rangle + \left\langle
      \mathrm{d} u, \mathrm{d} v \right\rangle + \left\langle p, v
    \right\rangle &= \left\langle f, v \right\rangle , &\quad \forall
    v &\in V ^k ,\\
    \left\langle u, q \right\rangle &= 0 , &\quad \forall q &\in
    \mathfrak{H} ^k .
  \end{alignedat}
\end{equation}
Here, the first equation implies that $ \sigma = \mathrm{d} ^\ast u $,
which weakly enforces the condition $ u \in V ^k \cap V _k ^\ast
$. Next, the second equation incorporates the additional term $
\left\langle p, v \right\rangle $, which allows for solutions to exist
even when $f \not\perp \mathfrak{H} ^k $.  Finally, the third equation
fixes the issue of non-uniqueness by requiring $ u \perp \mathfrak{H}
^k $.  The following result establishes the well-posedness of the
problem \eqref{eqn:mixedProblem}.

\begin{theorem}[\citet{ArFaWi2010}, Theorem 3.1]
  Let $ \left( W, \mathrm{d} \right) $ be a closed Hilbert complex
  with domain complex $ \left( V, \mathrm{d} \right) $.  The mixed
  formulation of the abstract Hodge Laplacian is well-posed.  That is,
  for any $ f \in W ^k $, there exists a unique $ \left( \sigma, u, p
  \right) \in V ^{ k - 1 } \times V ^k \times \mathfrak{H} ^k $
  satisfying \eqref{eqn:mixedProblem}.  Moreover,
  \begin{equation*}
    \left\lVert \sigma \right\rVert _V + \left\lVert u \right\rVert _V
    + \left\lVert p \right\rVert \leq c \left\lVert f \right\rVert ,
  \end{equation*}
  where $c$ is a constant depending only on the Poincar\'e constant $
  c _P $ in \autoref{lem:poincareInequality}.
\end{theorem}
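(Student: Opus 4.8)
The plan is to recast \eqref{eqn:mixedProblem} as a single bounded bilinear form on a product Hilbert space and then to verify a two-sided inf--sup (Banach--Ne\v{c}as) condition with constant depending only on $c_P$; well-posedness and the asserted estimate then follow from the standard theory of bilinear forms on Hilbert spaces (equivalently, from the closed range theorem). Set $X = V^{k-1}\times V^k\times\mathfrak{H}^k$ with $\lVert(\sigma,u,p)\rVert_X^2 = \lVert\sigma\rVert_V^2 + \lVert u\rVert_V^2 + \lVert p\rVert^2$, which is a Hilbert space because $\mathfrak{H}^k$ is a closed subspace of $W^k$, and define
\begin{equation*}
  B(\sigma,u,p;\tau,v,q) = \langle\sigma,\tau\rangle - \langle u,\mathrm{d}\tau\rangle + \langle\mathrm{d}\sigma,v\rangle + \langle\mathrm{d}u,\mathrm{d}v\rangle + \langle p,v\rangle + \langle u,q\rangle ,
\end{equation*}
so that \eqref{eqn:mixedProblem} reads $B(\sigma,u,p;\tau,v,q) = \langle f,v\rangle$ for all $(\tau,v,q)\in X$. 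Boundedness of $B$ on $X\times X$ and of the functional $(\tau,v,q)\mapsto\langle f,v\rangle$ (with norm $\le\lVert f\rVert$) is immediate from Cauchy--Schwarz and $\lVert\mathrm{d}(\cdot)\rVert\le\lVert\cdot\rVert_V$. Thus it remains to produce $\gamma = \gamma(c_P) > 0$ so that for every $(\sigma,u,p)\in X$ there is a test triple $(\tau,v,q)\in X$ with $B(\sigma,u,p;\tau,v,q)\ge\gamma\lVert(\sigma,u,p)\rVert_X^2$ and $\lVert(\tau,v,q)\rVert_X\le\gamma^{-1}\lVert(\sigma,u,p)\rVert_X$, together with the analogous bound for the transposed form.

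Constructing the test triple is the heart of the matter. Given $(\sigma,u,p)\in X$, take the $W$-orthogonal Hodge decomposition $u = u_{\mathfrak{B}} + u_{\mathfrak{H}} + u_\perp$ with $u_{\mathfrak{B}}\in\mathfrak{B}^k$, $u_{\mathfrak{H}}\in\mathfrak{H}^k$, $u_\perp\in\mathfrak{Z}^{k\perp}$; the domain-complex form of \autoref{lem:poincareInequality} gives $\lVert u_\perp\rVert_V\le c_P\lVert\mathrm{d}u_\perp\rVert = c_P\lVert\mathrm{d}u\rVert$, so that $\lVert(\sigma,u,p)\rVert_X^2$ is dominated by $\lVert\sigma\rVert^2 + \lVert\mathrm{d}\sigma\rVert^2 + \lVert u_{\mathfrak{B}}\rVert^2 + \lVert u_{\mathfrak{H}}\rVert^2 + (1+c_P^2)\lVert\mathrm{d}u\rVert^2 + \lVert p\rVert^2$. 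I would then assemble $(\tau,v,q)$ additively so that $B$ reproduces these summands one at a time: testing against $(\sigma,u,-p)$ yields $\lVert\sigma\rVert^2 + \lVert\mathrm{d}u\rVert^2$ (the $\langle u,\mathrm{d}\sigma\rangle$ and $\langle p,u\rangle$ contributions cancel in pairs); testing against $(0,\mathrm{d}\sigma,0)$ yields $\lVert\mathrm{d}\sigma\rVert^2$ (using $\mathrm{d}\mathrm{d}\sigma = 0$ and $p\perp_W\mathfrak{B}^k$); testing against $(0,p,0)$ yields $\lVert p\rVert^2$; testing against $(0,0,u_{\mathfrak{H}})$ yields $\langle u,u_{\mathfrak{H}}\rangle = \lVert u_{\mathfrak{H}}\rVert^2$; and, choosing $\rho\in\mathfrak{Z}^{(k-1)\perp}$ with $\mathrm{d}\rho = u_{\mathfrak{B}}$ and $\lVert\rho\rVert_V\le c_P\lVert u_{\mathfrak{B}}\rVert$ (surjectivity of $\mathrm{d}$ onto $\mathfrak{B}^k$ plus Poincar\'{e}), testing against $-\delta\rho$ yields $\delta\lVert u_{\mathfrak{B}}\rVert^2 - \delta\langle\sigma,\rho\rangle$, where $\delta\lvert\langle\sigma,\rho\rangle\rvert\le\delta c_P\lVert\sigma\rVert\lVert u_{\mathfrak{B}}\rVert$ is absorbed into the already-secured $\lVert\sigma\rVert^2$ once $\delta\sim c_P^{-2}$. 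The resulting $(\tau,v,q) = (\sigma-\delta\rho,\ u+\mathrm{d}\sigma+p,\ u_{\mathfrak{H}}-p)$ then satisfies $B(\sigma,u,p;\tau,v,q)\ge c(c_P)^{-1}\lVert(\sigma,u,p)\rVert_X^2$, while $\lVert\mathrm{d}\sigma\rVert_V = \lVert\mathrm{d}\sigma\rVert\le\lVert\sigma\rVert_V$, $\lVert p\rVert_V = \lVert p\rVert$, $\lVert u_{\mathfrak{H}}\rVert_V = \lVert u_{\mathfrak{H}}\rVert$ and $\lVert\rho\rVert_V\le c_P\lVert u_{\mathfrak{B}}\rVert$ force $\lVert(\tau,v,q)\rVert_X\le c(c_P)\lVert(\sigma,u,p)\rVert_X$.

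For the transposed form $B(\tau,v,q;\sigma,u,p)$, which differs from $B(\sigma,u,p;\tau,v,q)$ only in the signs of the coupling terms $\langle u,\mathrm{d}\tau\rangle$ and $\langle\mathrm{d}\sigma,v\rangle$, the same scheme --- with those signs flipped in the test triple --- gives the matching bound and the same dependence on $c_P$, since none of the norm estimates see those signs. With the two-sided inf--sup condition established, the standard well-posedness theorem produces a unique $(\sigma,u,p)\in X$ solving \eqref{eqn:mixedProblem} together with $\lVert(\sigma,u,p)\rVert_X\le\gamma^{-1}\lVert f\rVert$, hence $\lVert\sigma\rVert_V + \lVert u\rVert_V + \lVert p\rVert\le c\lVert f\rVert$ with $c = c(c_P)$, which is the assertion. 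The step I expect to demand the most care is the test-triple estimate: bookkeeping which cross terms cancel, tuning $\delta$, and checking that every constant entering is a function of $c_P$ alone --- the rest being the by-now-routine saddle-point machinery.
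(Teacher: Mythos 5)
Your proposal is correct and follows essentially the same route as the paper: recast \eqref{eqn:mixedProblem} as a variational problem for the bilinear form $B$ on the product space $V^{k-1}\times V^k\times \mathfrak{H}^k$ and conclude via the inf--sup condition, the only difference being that you prove the inf--sup bound (the content of \autoref{thm:inf-sup}, which the paper simply cites from Arnold--Falk--Winther) by the standard Hodge-decomposition test-function construction, which is exactly their argument. The sign $+\langle u,q\rangle$ in your form (versus the paper's $-\langle u,q\rangle$) is harmless, since the corresponding equation is homogeneous and your test-triple computation is consistent with your convention.
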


To prove this, one observes that \eqref{eqn:mixedProblem} can be
rewritten as a standard variational problem
\eqref{eqn:variationalProblem} on the space $ V ^{ k - 1 } \times V ^k
\times \mathfrak{H} ^k $, with the bilinear form
\begin{equation*}
  B \left( \sigma  , u , p ; \tau , v , q \right)
  = \left\langle \sigma  , \tau  \right\rangle  - \left\langle u
     , \mathrm{d} \tau \right\rangle  \\
  + \left\langle \mathrm{d} \sigma  , v  \right\rangle  +
  \left\langle \mathrm{d} u  , \mathrm{d} v  \right\rangle  +
  \left\langle p  , v  \right\rangle  - \left\langle u  , q 
  \right\rangle 
\end{equation*} 
and functional $ F \left( \tau, v, q \right) = \left\langle f, v
\right\rangle $.  The well-posedness then follows immediately from the
following theorem, which establishes the inf-sup condition for the
bilinear form $B$.

\begin{theorem}[\citet{ArFaWi2010}, Theorem 3.2]
  \label{thm:inf-sup}
  Let $ \left( W, \mathrm{d} \right) $ be a closed Hilbert complex
  with domain complex $ \left( V , \mathrm{d} \right) $.  There exists
  a constant $ \gamma > 0 $, depending only on the constant $ c _P $
  in the Poincar\'e inequality (\autoref{lem:poincareInequality}),
  such that for any $ \left( \sigma, u , p \right) \in V ^{ k - 1 }
  \times V ^k \times \mathfrak{H}  ^k $, there exists $ \left( \tau ,
    v, q \right) \in V ^{ k - 1 } \times V ^k \times \mathfrak{H}  ^k
  $ with
  \begin{equation*}
    B \left( \sigma, u, p; \tau , v, q \right) \geq \gamma \left(
      \left\lVert \sigma \right\rVert _V + \left\lVert u \right\rVert
      _V + \left\lVert p \right\rVert \right) \left( \left\lVert \tau
      \right\rVert _V + \left\lVert v \right\rVert _V + \left\lVert q
      \right\rVert \right) .
  \end{equation*}
\end{theorem}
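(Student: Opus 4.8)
The plan is to prove the inf-sup condition by, given an arbitrary $(\sigma, u, p)$, explicitly constructing a test triple $(\tau, v, q)$ for which $B$ is bounded below in the required product form. The starting observation is that the "diagonal" choice $(\tau, v, q) = (\sigma, u, p)$ only gives control of $\lVert \sigma \rVert^2 + \lVert \mathrm{d} u \rVert^2 - \lVert p \rVert \lVert u \rVert$ (the cross terms $-\langle u, \mathrm{d}\tau\rangle$ and $\langle \mathrm{d}\sigma, v\rangle$ cancel, and the $p$-terms partially cancel), which is far from the full norm: it misses $\lVert u \rVert$, $\lVert \mathrm{d}\sigma \rVert$, and $\lVert p \rVert$. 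So the heart of the argument is to add correction terms to the test function that recover each missing piece, using the Hodge decomposition and the Poincaré inequality.

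The key steps, in order, are as follows. First, decompose $u$ via the strong Hodge decomposition $u = \mathrm{d}\rho + u_{\mathfrak{H}} + u_\perp$ with $\rho \in \mathfrak{Z}^{k-1\perp}$, $u_{\mathfrak{H}} \in \mathfrak{H}^k$, $u_\perp \in \mathfrak{Z}^{k\perp}$; the third equation of the mixed problem will force $u_{\mathfrak{H}} = 0$ in the solution, but here we just use the decomposition structurally. Second, to recover $\lVert u \rVert_V$, exploit that $\langle u, \mathrm{d}\tau\rangle$ appears with a minus sign: choosing $\tau$ related to $\rho$ (so that $\mathrm{d}\tau$ picks out $\mathrm{d}\rho$, the $\mathfrak{B}^k$-part of $u$) and using Poincaré to bound $\lVert\rho\rVert_V \lesssim \lVert \mathrm{d}\rho\rVert$ lets the term $-\langle u, \mathrm{d}\tau \rangle$ contribute a positive multiple of $\lVert \mathrm{d}\rho\rVert^2$. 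Third, to recover $\lVert \mathrm{d}\sigma\rVert$ — equivalently $\lVert\sigma_\perp\rVert_V$ where $\sigma_\perp$ is the $\mathfrak{Z}^{k-1\perp}$-component of $\sigma$ — add to $v$ a term proportional to $\mathrm{d}\sigma$, so that $\langle \mathrm{d}\sigma, v\rangle$ produces $\lVert \mathrm{d}\sigma\rVert^2$; one must then control the damage this does to the other terms of $B$ (notably $\langle \mathrm{d} u, \mathrm{d} v\rangle$ and $\langle p, v\rangle$) using Cauchy–Schwarz, Poincaré, and Young's inequality. Fourth, recover $\lVert u_\perp \rVert_V$: since $u_\perp \in \mathfrak{Z}^{k\perp}$ with $\mathrm{d} u_\perp = \mathrm{d} u$, Poincaré gives $\lVert u_\perp\rVert_V \lesssim \lVert \mathrm{d} u\rVert$, which is already controlled from the diagonal term. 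Fifth, recover $\lVert p \rVert$ by choosing the $q$-component to be a negative multiple of $p$, so that $-\langle u, q\rangle$ contributes $\lVert p\rVert^2$ modulo a term involving $\langle u_{\mathfrak{H}}, p\rangle$ that is handled by the earlier control on $u_{\mathfrak{H}}$. Finally, assemble: take $(\tau, v, q)$ to be the diagonal choice plus small multiples (with carefully chosen, $c_P$-dependent coefficients) of the three corrections, expand $B$, and absorb all cross terms via Young's inequality so that a positive definite combination of $\lVert\sigma\rVert_V^2, \lVert u\rVert_V^2, \lVert p\rVert^2$ survives; then check that the norm of the constructed $(\tau, v, q)$ is bounded by a constant times $\lVert\sigma\rVert_V + \lVert u\rVert_V + \lVert p\rVert$, which converts the quadratic lower bound into the stated bilinear one.

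The main obstacle is the bookkeeping in the fifth-to-last step: the correction terms interact, and each one, while producing a desired positive square, also generates several indefinite cross terms that must be dominated. The delicate point is choosing the hierarchy of small parameters (one for each correction) in the right order — typically the $\lVert\mathrm{d}\sigma\rVert$-correction must be much smaller than the $\lVert\mathrm{d}\rho\rVert$-correction, which in turn is controlled relative to the leading diagonal terms — so that the final quadratic form is manifestly positive definite with a constant depending only on $c_P$. This is exactly the computation carried out in \citet{ArFaWi2010}; I would follow their choice of test function rather than reinvent the parameter juggling, and the only real content beyond careful estimation is the repeated, essential use of the Hodge decomposition (to split $u$ and $\sigma$) together with the Poincaré inequality (to trade $\lVert\cdot\rVert_V$ on the $\mathfrak{Z}^{\perp}$-components for $\lVert\mathrm{d}\,\cdot\rVert$).
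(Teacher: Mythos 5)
First, note that the paper itself gives no proof of this statement: it is quoted verbatim from Arnold--Falk--Winther (their Theorem 3.2), so the only meaningful comparison is with the AFW argument you yourself invoke. Your overall strategy --- Hodge decomposition of $u$, the Poincar\'e inequality to trade $V$-norms of $\mathfrak{Z}^\perp$-components for $\lVert \mathrm{d}\,\cdot\rVert$, and a test triple equal to the diagonal choice plus corrections --- is indeed that argument. But the specific mechanism you describe for the harmonic quantities is wrong, and it leaves a genuine gap. In $B(\sigma,u,p;\tau,v,q)$ the trial variable $p$ appears \emph{only} in the term $\left\langle p, v \right\rangle$, so $\lVert p \rVert^2$ can only be produced by putting $p$ into $v$. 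Your step five instead takes $q$ to be a negative multiple of $p$ and claims $-\left\langle u, q\right\rangle$ yields $\lVert p\rVert^2$; in fact it yields $c\left\langle u, p\right\rangle = c\left\langle u_{\mathfrak{H}}, p\right\rangle$, a sign-indefinite cross term and nothing more. Moreover, the harmonic component $u_{\mathfrak{H}}$ of $u$ is never controlled anywhere in your sketch: step two recovers $\lVert u_{\mathfrak{B}}\rVert$, step four recovers $\lVert u_\perp\rVert_V$, but the remark that the third equation of the mixed problem forces $u_{\mathfrak{H}}=0$ is irrelevant here, since the inf-sup condition quantifies over \emph{arbitrary} $(\sigma,u,p)$, not solutions, and there is no ``earlier control on $u_{\mathfrak{H}}$'' for your step five to appeal to. The correct choices are $\tau = \sigma - c_P^{-2}\rho$, $v = u + \mathrm{d}\sigma + p$, and $q = p - u_{\mathfrak{H}}$, for which $\left\langle p, v\right\rangle - \left\langle u, q\right\rangle = \lVert p\rVert^2 + \lVert u_{\mathfrak{H}}\rVert^2$ exactly, the two $\left\langle u_{\mathfrak{H}}, p\right\rangle$ cross terms cancelling.

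Two smaller points: the diagonal choice gives exactly $\lVert\sigma\rVert^2 + \lVert\mathrm{d}u\rVert^2$ (the $p$-terms cancel completely, not ``partially''), and the ``hierarchy of small parameters'' you anticipate is not needed --- adding $\mathrm{d}\sigma$ and $p$ to $v$ does no damage elsewhere, because $\mathrm{d}\mathrm{d}\sigma = 0$ kills the interaction with $\left\langle \mathrm{d}u, \mathrm{d}v\right\rangle$ and $p \in \mathfrak{H}^k \perp \mathfrak{B}^k$ kills $\left\langle p, \mathrm{d}\sigma\right\rangle$; the only $c_P$-dependent coefficient in the whole construction is the one multiplying $\rho$.
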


From the well-posedness result, it follows that there exists a bounded
solution operator $ K \colon W ^k \rightarrow W ^k $ defined by $ K f
= u $.

\subsection{Approximation by a subcomplex}

In order to obtain approximate numerical solutions to the mixed
variational problem \eqref{eqn:mixedProblem}, \citet{ArFaWi2010}
suppose that one is given a (finite-dimensional) subcomplex $ V _h
\subset V $ of the domain complex: that is, $ V _h ^k \subset V ^k $
is a Hilbert subspace for each $k$, and the inclusion mapping $ i _h
\colon V _h \hookrightarrow V $ is a morphism of Hilbert complexes.
By analogy with the Galerkin method, one can then consider the mixed
variational problem on the subcomplex: Find $ \left( \sigma _h , u _h
  , p _h \right) \in V _h ^{ k - 1 } \times V _h ^k \times
\mathfrak{H} _h ^k $ satisfying
\begin{equation}
  \label{eqn:subcomplexProblem}
  \begin{alignedat}{2}
    \left\langle \sigma _h , \tau \right\rangle - \left\langle u _h ,
      \mathrm{d} \tau \right\rangle &= 0, &\quad \forall \tau &\in V
    _h
    ^{ k - 1 } ,\\
    \left\langle \mathrm{d} \sigma _h , v \right\rangle + \left\langle
      \mathrm{d} u _h , \mathrm{d} v \right\rangle + \left\langle p _h
      , v \right\rangle &= \left\langle f, v \right\rangle , &\quad
    \forall
    v &\in V _h  ^k ,\\
    \left\langle u _h , q \right\rangle &= 0 , &\quad \forall q &\in
    \mathfrak{H} _h ^k .
  \end{alignedat}
\end{equation}

For the error analysis of this method, one more crucial assumption
must be made: that there exists some Hilbert complex ``projection'' $
\pi _h \colon V \rightarrow V _h $.  We put ``projection'' in quotes
because this need not be the actual orthogonal projection $ i _h ^\ast
$ with respect to the inner product; indeed, that projection is not
generally a morphism of Hilbert complexes, since it may not commute
with the differentials.  However, the map $ \pi _h $ is $V$-bounded,
surjective, and idempotent.  It follows, then, that although it does
not satisfy the optimality property of the true projection, it does
still satisfy a \emph{quasi-optimality} property, since
\begin{equation*}
  \left\lVert u - \pi _h u  \right\rVert _V = \inf _{ v \in V _h }
  \left\lVert \left( I - \pi _h \right) \left( u - v \right)
  \right\rVert _V \leq \left\lVert I - \pi _h \right\rVert \inf _{ v
    \in V _h } \left\lVert u - v \right\rVert _V ,
\end{equation*}
where the first step follows from the idempotence of $ \pi _h $, i.e.,
$ \left( I - \pi _h \right) v = 0 $ for all $ v \in V _h $.  With this
framework in place, the following error estimate can be established.

\begin{theorem}[\citet{ArFaWi2010}, Theorem 3.9]
  \label{thm:subcomplex}
  Let $ \left( V _h , \mathrm{d} \right) $ be a family of subcomplexes
  of the domain complex $ \left( V, \mathrm{d} \right) $ of a closed
  Hilbert complex, parametrized by $h$ and admitting uniformly
  $V$-bounded cochain projections, and let $ \left( \sigma, u, p
  \right) \in V ^{ k - 1 } \times V ^k \times \mathfrak{H} ^k $ be the
  solution of \eqref{eqn:mixedProblem} and $ \left( \sigma _h , u _h ,
    p _h \right) \in V _h ^{ k - 1 } \times V _h ^k \times
  \mathfrak{H} _h ^k $ the solution of problem
  \eqref{eqn:subcomplexProblem}.  Then
  \begin{multline*}
    \left\lVert \sigma - \sigma _h \right\rVert _V + \left\lVert u - u
      _h \right\rVert _V + \left\lVert p - p _h \right\rVert \\
    \leq C \bigl( \inf _{ \tau \in V _h ^{ k - 1 } } \left\lVert
      \sigma - \tau \right\rVert _V + \inf _{ v \in V _h ^k }
    \left\lVert u - v \right\rVert _V + \inf _{ q \in V _h ^k }
    \left\lVert p - q \right\rVert _V + \mu \inf _{ v \in V _h ^k }
    \left\lVert P _{ \mathfrak{B} } u - v \right\rVert _V \bigr) ,
  \end{multline*}
  where $ \mu = \mu _h ^k = \displaystyle\sup _{ \substack{r \in
      \mathfrak{H} ^k \\ \left\lVert r \right\rVert = 1 } }
  \left\lVert \left( I - \pi _h ^k \right) r \right\rVert $.
\end{theorem}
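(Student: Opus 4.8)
The plan is to recognize \eqref{eqn:subcomplexProblem} as exactly the mixed formulation of the abstract Hodge Laplacian for the subcomplex $(V_h,\mathrm{d})$, to show that this subcomplex is itself a closed Hilbert complex whose Poincar\'e constant is bounded independently of $h$, and then to combine the resulting uniform inf-sup stability with a consistency estimate, in the manner of the first Strang lemma. The one genuinely new feature, compared with a standard C\'ea/Strang argument, is the mismatch between the harmonic spaces $\mathfrak{H}^k$ and $\mathfrak{H}_h^k$; this is the source of the term $\mu\inf_{v\in V_h^k}\lVert P_{\mathfrak{B}}u-v\rVert_V$.

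\emph{Step 1 (uniform discrete stability).} First I would establish the discrete Poincar\'e inequality. Given $v\in\mathfrak{Z}_h^{k\perp}$, lift $\mathrm{d}v\in\mathfrak{B}^{k+1}$ through \autoref{lem:poincareInequality} to some $\rho\in\mathfrak{Z}^{k\perp}$ with $\mathrm{d}\rho=\mathrm{d}v$ and $\lVert\rho\rVert_V\le c_P\lVert\mathrm{d}v\rVert$; since $\pi_h$ is a cochain map restricting to the identity on $V_h$, we get $\mathrm{d}(\pi_h\rho)=\pi_h\mathrm{d}\rho=\mathrm{d}v$, so $\pi_h\rho-v\in\mathfrak{Z}_h^k$ and hence $\lVert v\rVert_V\le\lVert\pi_h\rho\rVert_V\le c_P(\sup_h\lVert\pi_h\rVert)\lVert\mathrm{d}v\rVert$. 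In particular $\mathrm{d}$ is bounded below on $\mathfrak{Z}_h^{k\perp}$, so $(V_h,\mathrm{d})$ is closed, and \autoref{thm:inf-sup} together with the well-posedness theorem preceding it, applied to $(V_h,\mathrm{d})$, yields the unique solvability of \eqref{eqn:subcomplexProblem} and an inf-sup constant $\gamma>0$ independent of $h$. I would also record the harmonic-space gap estimates $\lVert(I-P_{\mathfrak{H}_h^k})r\rVert\le C\mu\lVert r\rVert$ for $r\in\mathfrak{H}^k$ and $\lVert(I-P_{\mathfrak{H}^k})q\rVert\le C\mu\lVert q\rVert$ for $q\in\mathfrak{H}_h^k$; both follow from the definition of $\mu$, the cochain property $\pi_h\mathfrak{H}^k\subset\mathfrak{Z}_h^k$, and the fact that $\mu<1$ for small $h$, which also forces $\dim\mathfrak{H}_h^k=\dim\mathfrak{H}^k$.

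\emph{Step 2 (quasi-optimality plus consistency).} Next I would use the uniform discrete inf-sup condition in quasi-optimality form: there is a test triple $(\tau,v,q)\in V_h^{k-1}\times V_h^k\times\mathfrak{H}_h^k$ with $\lVert\tau\rVert_V+\lVert v\rVert_V+\lVert q\rVert\le1$ such that
\[ \lVert\sigma_h-\pi_h\sigma\rVert_V+\lVert u_h-\pi_h u\rVert_V+\lVert p_h-p_h'\rVert\le\tfrac1\gamma\,B\bigl(\sigma_h-\pi_h\sigma,\ u_h-\pi_h u,\ p_h-p_h';\ \tau,v,q\bigr), \]
where $p_h':=P_{\mathfrak{H}_h^k}p$. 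Because $(\tau,v,q)$ lies in the discrete spaces, \eqref{eqn:subcomplexProblem} gives $B(\sigma_h,u_h,p_h;\tau,v,q)=\langle f,v\rangle$, while \eqref{eqn:mixedProblem} gives $B(\sigma,u,p;\tau,v,q)=\langle f,v\rangle-\langle u,q\rangle$ --- the harmonic equation being the only one that can fail, since $q$ need not lie in $\mathfrak{H}^k$. Subtracting, the right-hand side above equals $B(\sigma-\pi_h\sigma,\ u-\pi_h u,\ p-p_h';\ \tau,v,q)+\langle u,q\rangle$; expanding $B$ term by term and using $\lVert\mathrm{d}\,\cdot\rVert\le\lVert\cdot\rVert_V$ with the normalization of $(\tau,v,q)$ bounds the first summand by a constant multiple of $\lVert\sigma-\pi_h\sigma\rVert_V+\lVert u-\pi_h u\rVert_V+\lVert p-p_h'\rVert$. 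The $\pi_h$-terms become best-approximation errors by quasi-optimality of $\pi_h$, and $\lVert p-p_h'\rVert\le C\inf_{q\in V_h^k}\lVert p-q\rVert_V$ follows from the discrete Hodge decomposition and the uniform discrete Poincar\'e inequality (project a near-optimal approximant of $p$ onto $\mathfrak{H}_h^k$ and estimate the discarded $\mathfrak{B}_h$- and $\mathfrak{Z}_h^\perp$-components).

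\emph{Step 3 (the consistency residual) and assembly.} The delicate term is $\langle u,q\rangle$ with $q\in\mathfrak{H}_h^k$, $\lVert q\rVert\le1$. Using $u\perp\mathfrak{H}^k$ and $q\in\mathfrak{Z}^k=\mathfrak{B}^k\oplus\mathfrak{H}^k$ gives $\langle u,q\rangle=\langle P_{\mathfrak{B}}u,q\rangle$; moreover, for any $v\in\mathfrak{B}_h^k$ --- note $\mathfrak{B}_h^k\subset\mathfrak{B}^k$ and $\mathfrak{B}_h^k\perp\mathfrak{H}_h^k\ni q$ --- we have $\langle P_{\mathfrak{B}}u,q\rangle=\langle P_{\mathfrak{B}}u-v,q\rangle=\langle P_{\mathfrak{B}}u-v,(I-P_{\mathfrak{H}^k})q\rangle$, since $P_{\mathfrak{B}}u-v\in\mathfrak{B}^k$ is $W$-orthogonal to the harmonic component of $q$. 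Hence $\lvert\langle u,q\rangle\rvert\le C\mu\lVert P_{\mathfrak{B}}u-v\rVert_V$ for every $v\in\mathfrak{B}_h^k$, and taking $v=\pi_h P_{\mathfrak{B}}u\in\mathfrak{B}_h^k$ and invoking quasi-optimality once more yields $\lvert\langle u,q\rangle\rvert\le C\mu\inf_{v\in V_h^k}\lVert P_{\mathfrak{B}}u-v\rVert_V$. Finally, the triangle inequality $\lVert\sigma-\sigma_h\rVert_V\le\lVert\sigma-\pi_h\sigma\rVert_V+\lVert\pi_h\sigma-\sigma_h\rVert_V$ and its analogues, together with quasi-optimality of $\pi_h$ to pass from the $\lVert\,\cdot\,-\pi_h\,\cdot\,\rVert_V$ terms to the infima, assemble everything into the claimed estimate. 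The main obstacle throughout is the non-nested harmonic spaces: establishing the gap bound $\lesssim\mu$ (hence $\dim\mathfrak{H}_h^k=\dim\mathfrak{H}^k$), and making sure the residual $\langle u,q\rangle$ is charged against $\mu\inf_{v\in V_h^k}\lVert P_{\mathfrak{B}}u-v\rVert_V$ rather than the coarser $\mu\lVert u\rVert$ --- which is exactly what the choice $v\in\mathfrak{B}_h^k$ buys us, since it simultaneously kills $\langle v,q\rangle$ and ensures that $P_{\mathfrak{B}}u-v$ only pairs against the $O(\mu)$ part of $q$.
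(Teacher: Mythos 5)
The paper itself gives no proof of this statement---it is quoted, with citation, from Arnold--Falk--Winther (2010, Theorem~3.9)---and your argument is essentially their proof: a uniform discrete Poincar\'e inequality yielding $h$-independent inf-sup stability on the subcomplex, an error split through the interpolants $\left( \pi _h \sigma , \pi _h u , P _{ \mathfrak{H} _h } p \right)$ with Galerkin orthogonality in the first two equations, and the consistency residual $\left\langle u , q \right\rangle$ charged to $\mu \inf _{ v \in V _h ^k } \left\lVert P _{ \mathfrak{B} } u - v \right\rVert _V$ by testing against $v \in \mathfrak{B} _h ^k$ together with the harmonic-gap bound. The one ingredient you assert rather than prove, $\left\lVert \left( I - P _{ \mathfrak{H} } \right) q \right\rVert \leq \mu \left\lVert q \right\rVert$ for $q \in \mathfrak{H} _h ^k$, is exactly their Theorem~3.5 and holds with constant one and without needing $\mu < 1$: since $\left( I - P _{ \mathfrak{H} } \right) q = P _{ \mathfrak{B} } q$ and $\pi _h P _{ \mathfrak{B} } q \in \mathfrak{B} _h ^k \perp q$, one has $\left\lVert P _{ \mathfrak{B} } q \right\rVert ^2 = \left\langle \left( I - \pi _h \right) P _{ \mathfrak{B} } q , q \right\rangle = - \left\langle \left( I - \pi _h \right) P _{ \mathfrak{H} } q , q \right\rangle \leq \mu \left\lVert q \right\rVert ^2$, using $\left( I - \pi _h \right) q = 0$, so your proof is correct as written.
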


Therefore, if $ V _h $ is pointwise approximating, in the sense that $
\inf _{ v \in V _h } \left\lVert u - v \right\rVert \rightarrow 0 $ as
$ h \rightarrow 0 $ for every $ u \in V $, then the numerical solution
converges to the exact solution.

\section{Analysis of variational crimes}
\label{sec:crimes}

In this section, we extend the results of \citet{ArFaWi2010},
summarized in the previous section, by removing the requirement for $
V _h $ to be a subcomplex of $V$.  The key point of departure is in
the map $ i _h \colon V _h \hookrightarrow V $; rather than being an
inclusion, we require only that it is an injective morphism of Hilbert
complexes, with the property that $ \pi _h \circ i _h $ is the
identity.  (The latter requirement simply corresponds to the earlier
condition that $ \pi _h $ be idempotent in the case of subcomplexes.)
After stating some basic results about complexes equipped with such
maps, we develop error estimates for the mixed variational problem and
eigenvalue problem on $ V _h $.  These estimates contain two
additional error terms, in addition to those in the analysis of
\citet{ArFaWi2010}.  These extra terms, analogous to those in the
Strang lemmas for generalized Galerkin methods, measure the
``severity'' of two variational crimes: first, how well the right-hand
side $ i _h f _h $ approximates $f$; and second, the extent to which $
i _h $ fails to be unitary.

\subsection{Approximation by an arbitrary complex}
In order to approximate a Hilbert complex $ \left( W, \mathrm{d}
\right) $, suppose we have another Hilbert complex $ \left( W _h ,
  \mathrm{d} _h \right) $, along with a pair of morphisms: an
injection $ i _h \colon W _h \hookrightarrow W $ and a projection $
\pi _h \colon W \rightarrow W _h $, such that $ \pi _h ^k \circ i _h
^k $ is the identity on $ W _h ^k $ for each $k$.  Recall that, by
\autoref{def:morphism} of a Hilbert complex morphism, the maps $ i _h
^k $ and $ \pi _h ^k $ must be bounded for each $k$.  The
relationships among the domains and maps are illustrated in the
following diagram:
\begin{equation*}
  \xymatrix@=3em{
    \cdots \ar[r] & V ^k \ar@<.3em>[d]^{\pi _h ^k }
    \ar[r]^{\mathrm{d}^k}&   V ^{k+1}
    \ar@<.3em>[d]^{\pi _h ^{k+1}} \ar[r] & \cdots \phantom{.}\\
    \cdots \ar[r] & V _h ^k \ar@<.3em>[u]^{i _h ^k} \ar[r]^{\mathrm{d}
      _h ^k}
    &   V _h ^{k+1} \ar@<.3em>[u]^{i _h ^{k+1}} \ar[r] & \cdots .
  } 
\end{equation*}
\citet{ArFaWi2010} consider the case where $ W _h \subset W $ is a
subcomplex, and $ i _h $ is the inclusion of $ W _h $ into $W$.  In
this special case, $ i _h $ is unitary (i.e., an isometry), since for
all $ u, v \in W _h ^k $, we have $ \left\langle i _h u , i _h v
\right\rangle = \left\langle u, v \right\rangle = \left\langle u , v
\right\rangle _h $.  Indeed, if $ i _h $ is unitary, then we can
simply identify $ W _h $ with the subcomplex $ i _h W _h \subset W$.
More generally, though, we will consider cases where $ W _h
\not\subset W $, and where $ i _h $ is not necessarily unitary.

We begin by demonstrating some basic facts about these approximations.

\begin{theorem}
  If $ \left( W , \mathrm{d} \right) $ is a bounded Hilbert complex,
  then so is $ \left( W _h , \mathrm{d} _h \right) $.
\end{theorem}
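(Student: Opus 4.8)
The plan is to verify the two defining properties of a bounded Hilbert complex for $\left( W _h , \mathrm{d} _h \right)$: that each $ \mathrm{d} _h ^k $ is a bounded linear map $ W _h ^k \rightarrow W _h ^{ k + 1 } $, and that the complex property $ \mathrm{d} _h ^{k} \circ \mathrm{d} _h ^{k-1} = 0 $ holds. (Densely-defined and closed are automatic here since, being bounded, $ \mathrm{d} _h ^k $ will be defined on all of $ W _h ^k $.) The key tool is the commuting-diagram relations supplied by the morphisms: since $ i _h $ is a morphism of Hilbert complexes, $ \mathrm{d} ^k i _h ^k = i _h ^{k+1} \mathrm{d} _h ^k $, and since $ \pi _h $ is a morphism, $ \mathrm{d} _h ^k \pi _h ^k = \pi _h ^{k+1} \mathrm{d} ^k $; and $ \pi _h ^k \circ i _h ^k = \mathrm{id} _{ W _h ^k } $ for every $k$.

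First I would establish boundedness. Apply $ \pi _h ^{k+1} $ to the relation $ \mathrm{d} ^k i _h ^k = i _h ^{k+1} \mathrm{d} _h ^k $ and use $ \pi _h ^{k+1} i _h ^{k+1} = \mathrm{id} $ to obtain the factorization
\begin{equation*}
  \mathrm{d} _h ^k = \pi _h ^{k+1} \circ \mathrm{d} ^k \circ i _h ^k .
\end{equation*}
Each factor on the right is a bounded linear map: $ i _h ^k $ and $ \pi _h ^{k+1} $ are bounded by the definition of a Hilbert complex morphism, and $ \mathrm{d} ^k \colon W ^k \rightarrow W ^{k+1} $ is bounded by the hypothesis that $ \left( W, \mathrm{d} \right) $ is a bounded Hilbert complex. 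Hence $ \mathrm{d} _h ^k $ is bounded, with $ \left\lVert \mathrm{d} _h ^k \right\rVert \leq \left\lVert \pi _h ^{k+1} \right\rVert \left\lVert \mathrm{d} ^k \right\rVert \left\lVert i _h ^k \right\rVert $.

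Next I would check the cochain property. Using the factorization twice and the commuting relations,
\begin{equation*}
  \mathrm{d} _h ^k \circ \mathrm{d} _h ^{k-1} = \bigl( \pi _h ^{k+1} \mathrm{d} ^k i _h ^k \bigr) \bigl( \pi _h ^k \mathrm{d} ^{k-1} i _h ^{k-1} \bigr) ;
\end{equation*}
the cleanest route is to instead apply the morphism property of $ i _h $ iteratively: $ i _h ^{k+1} \mathrm{d} _h ^k \mathrm{d} _h ^{k-1} = \mathrm{d} ^k i _h ^k \mathrm{d} _h ^{k-1} = \mathrm{d} ^k \mathrm{d} ^{k-1} i _h ^{k-1} = 0 $, where the last equality is the cochain property of $ \left( W, \mathrm{d} \right) $. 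Since $ i _h ^{k+1} $ is injective, it follows that $ \mathrm{d} _h ^k \mathrm{d} _h ^{k-1} = 0 $, completing the verification. There is no serious obstacle here; the only point requiring minor care is recognizing that the hypotheses already force $ i _h ^k $, $ \pi _h ^k $, and $ \mathrm{d} ^k $ to be everywhere-defined bounded operators, so that the composition argument is legitimate and $ \mathrm{d} _h ^k $ inherits boundedness directly. Injectivity of $ i _h $ is what lets us cancel it in the last step, but one could alternatively avoid even that by composing with $ \pi _h ^{k+1} $ on the left.
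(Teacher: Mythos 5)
Your proof is correct and uses exactly the paper's argument: the factorization $\mathrm{d}_h^k = \pi_h^{k+1} i_h^{k+1} \mathrm{d}_h^k = \pi_h^{k+1} \mathrm{d}^k i_h^k$ together with the bound $\lVert \mathrm{d}_h^k \rVert \leq \lVert \pi_h^{k+1} \rVert \lVert \mathrm{d}^k \rVert \lVert i_h^k \rVert$. The additional verification of $\mathrm{d}_h^k \circ \mathrm{d}_h^{k-1} = 0$ is harmless but unnecessary, since $(W_h, \mathrm{d}_h)$ is already assumed to be a Hilbert complex and only boundedness of the differentials needs to be checked.
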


\begin{proof}
  $ \bigl\lVert \mathrm{d} _h ^k \bigr\rVert = \bigl\lVert \pi _h
  ^{k+1} i _h ^{ k + 1 } \mathrm{d} _h ^k \bigr\rVert = \bigl\lVert
  \pi _h ^{ k + 1 } \mathrm{d} ^k i _h ^k \bigr\rVert \leq \bigl\lVert
  \pi _h ^{ k + 1 } \bigr\rVert \bigl\lVert \mathrm{d} ^k \bigr\rVert
  \bigl\lVert i _h ^k \bigr\rVert < \infty $.
\end{proof}

\begin{theorem}
  If $ \left( W , \mathrm{d} \right) $ is a closed Hilbert complex,
  then so is $ \left( W _h , \mathrm{d} _h \right) $.
\end{theorem}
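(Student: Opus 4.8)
The plan is to prove the closed-image condition directly: I will show that for each $k$ the coboundary space $\mathfrak{B}_h^{k+1} = \mathrm{d}_h^k V_h^k$ is closed in $W_h^{k+1}$. (Recall that $(W_h,\mathrm{d}_h)$ is already assumed to be a Hilbert complex, so the maps $\mathrm{d}_h^k$ are closed and densely defined; only the closedness of the images needs to be established.) The idea is a diagram chase: transport a convergent sequence of coboundaries up to $W$ via the bounded injection $i_h$, use that $(W,\mathrm{d})$ is closed there, and then pull the limit back down via the bounded projection $\pi_h$.

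Concretely, suppose $v_n \in V_h^k$ with $\mathrm{d}_h^k v_n \to w$ in $W_h^{k+1}$; I want to produce $\tilde v \in V_h^k$ with $\mathrm{d}_h^k \tilde v = w$. Applying the bounded map $i_h^{k+1}$ and using the morphism identity $i_h^{k+1} \mathrm{d}_h^k = \mathrm{d}^k i_h^k$ gives
\begin{equation*}
  \mathrm{d}^k \bigl( i_h^k v_n \bigr) = i_h^{k+1} \mathrm{d}_h^k v_n \longrightarrow i_h^{k+1} w \quad \text{in } W^{k+1},
\end{equation*}
with each $i_h^k v_n \in V^k$ since $i_h$ is a morphism. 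Because $(W,\mathrm{d})$ is closed, $\mathfrak{B}^{k+1}$ is a closed subspace of $W^{k+1}$, so $i_h^{k+1} w = \mathrm{d}^k u$ for some $u \in V^k$.

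Now apply $\pi_h^{k+1}$. Using the morphism relation $\mathrm{d}_h^k \pi_h^k = \pi_h^{k+1} \mathrm{d}^k$ together with the hypothesis $\pi_h^{k+1} i_h^{k+1} = \mathrm{id}$, I get
\begin{equation*}
  w = \pi_h^{k+1} i_h^{k+1} w = \pi_h^{k+1} \mathrm{d}^k u = \mathrm{d}_h^k \bigl( \pi_h^k u \bigr),
\end{equation*}
and $\pi_h^k u \in V_h^k$ because $\pi_h$ is a morphism. Hence $w \in \mathrm{d}_h^k V_h^k = \mathfrak{B}_h^{k+1}$, so this image is closed and $(W_h,\mathrm{d}_h)$ is a closed Hilbert complex.

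I do not expect a genuine obstacle here — the argument is essentially a diagram chase. The only points needing care are that $i_h$ and $\pi_h$, being Hilbert-complex morphisms, are bounded (so they commute with the limit) and carry domains into domains ($i_h^k V_h^k \subset V^k$ and $\pi_h^k V^k \subset V_h^k$), and that the commutation identities $i_h^{k+1}\mathrm{d}_h^k = \mathrm{d}^k i_h^k$ and $\pi_h^{k+1}\mathrm{d}^k = \mathrm{d}_h^k \pi_h^k$ are applied only on the appropriate domains. If one wished to also record that the associated domain complex $(V_h,\mathrm{d}_h)$ is closed, that follows immediately, just as in the remark following the definition of the domain complex.
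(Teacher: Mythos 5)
Your proof is correct and is essentially the paper's argument: the paper proves $\mathfrak{B}_h^k = i_h^{-1}\mathfrak{B}^k$ (using that $i_h$ and $\pi_h$ are morphisms with $\pi_h \circ i_h = \mathrm{id}$) and concludes closedness from continuity of $i_h$, which is exactly your sequential diagram chase packaged as a preimage statement. No gap; the two write-ups differ only in presentation.
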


\begin{proof}
  Assume that $ \left( W, \mathrm{d} \right) $ is closed, so that each
  coboundary space $ \mathfrak{B} ^k $ is closed in $ W ^k $.  Now,
  since $ i _h $ is a morphism, if $ v _h \in \mathfrak{B} _h ^k $
  then $ i _h v _h \in \mathfrak{B} ^k $, so $ \mathfrak{B} _h ^k
  \subset i _h ^{-1} \mathfrak{B} ^k $.  Conversely, since $ \pi _h $
  is a morphism, if $ i _h v _h \in \mathfrak{B} ^k $ then $ v _h =
  \pi _h i _h v _h \in \mathfrak{B} _h ^k $, so $ i _h ^{-1}
  \mathfrak{B} ^k \subset \mathfrak{B} _h ^k $.  Therefore, $
  \mathfrak{B} _h ^k = i _h ^{-1} \mathfrak{B} ^k $, and since $ i _h
  $ is bounded (and hence continuous), it follows that $ \mathfrak{B}
  _h ^k $ is closed.
\end{proof}

Since $ \pi _h ^k \circ i _h ^k = \mathrm{id} _{ W _h ^k } $, this
composition induces the identity map on the reduced cohomology space $
\mathfrak{Z} _h ^k / \overline{ \mathfrak{B} _h ^k } $; thus $ i _h $
induces an injection on reduced cohomology, while $ \pi _h $ induces a
surjection.  We now show that, given a certain approximation condition
on the harmonic spaces $ \mathfrak{H} ^k $, these induced maps are in
fact isomorphisms (which are inverses of one another, since their
composition is the identity).

\begin{theorem}
  Let $ \left( W, \mathrm{d} \right) $ and $ \left( W _h , \mathrm{d}
    _h \right) $ be Hilbert complexes, with morphisms $ i _h \colon W
  _h \hookrightarrow W $ and $ \pi _h \colon W \rightarrow W _h
  $ such that $ \pi _h ^k \circ i _h ^k = \mathrm{id} _{ W _h ^k } $
  for each $k$.  If, for all $k$,
  \begin{equation*}
    \bigl\lVert q - i _h ^k \pi _h ^k q \bigr\rVert < \left\lVert q
    \right\rVert , \quad \forall  q \in \mathfrak{H}  ^k,\ q \neq 0 ,
  \end{equation*}
  then $ \pi _h $ (and thus $ i _h $) induces an isomorphism on
  the reduced cohomology spaces.
\end{theorem}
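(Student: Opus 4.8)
The plan is to show that the induced map $\pi_h^*$ on reduced cohomology, $[z] \mapsto [\pi_h z]$, is injective (we already know from the remark preceding the statement that it is surjective, since $\pi_h^k \circ i_h^k = \mathrm{id}$). Because reduced cohomology is isomorphic to the harmonic space via the canonical identification $\mathfrak{Z}^k/\overline{\mathfrak{B}^k} \cong \mathfrak{H}^k$, it is equivalent and more convenient to work directly with harmonic representatives: the composition $i_h^* \circ \pi_h^*$ on $\mathfrak{H}^k$ may be computed as $q \mapsto P_{\mathfrak{H}_h^k}(\pi_h^k q)$ followed by $P_{\mathfrak{H}^k}(i_h^k(\cdot))$, and I want to show this is an isomorphism of $\mathfrak{H}^k$. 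Once $\pi_h^*$ is known to be injective on all cohomology spaces, combining with the already-established surjectivity gives that it is an isomorphism, and then $i_h^*$ is its two-sided inverse.

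First I would fix $k$ and take $q \in \mathfrak{H}^k$ with $q \neq 0$; the goal is to show $\pi_h^k q$ is not a coboundary in $W_h^{k+1}$, i.e. that its reduced cohomology class is nonzero, equivalently that its harmonic projection $P_{\mathfrak{H}_h^k}(\pi_h^k q)$ is nonzero. Suppose for contradiction that $\pi_h^k q \in \overline{\mathfrak{B}_h^k}$. Then applying $i_h^k$ and using that $i_h$ is a morphism (so carries coboundaries to coboundaries, and by continuity $\overline{\mathfrak{B}_h^k}$ into $\overline{\mathfrak{B}^k}$), we get $i_h^k \pi_h^k q \in \overline{\mathfrak{B}^k}$. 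Now write $q = (q - i_h^k \pi_h^k q) + i_h^k \pi_h^k q$: the second term lies in $\overline{\mathfrak{B}^k}$, while $q$ itself is harmonic, hence orthogonal to $\overline{\mathfrak{B}^k}$; taking the inner product with $q$ gives $\left\lVert q \right\rVert^2 = \langle q - i_h^k \pi_h^k q, q\rangle \leq \bigl\lVert q - i_h^k \pi_h^k q \bigr\rVert \, \left\lVert q \right\rVert$ by Cauchy--Schwarz. Since $q \neq 0$ this forces $\left\lVert q \right\rVert \leq \bigl\lVert q - i_h^k \pi_h^k q \bigr\rVert$, directly contradicting the hypothesis. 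Hence $\pi_h^k q$ has nonzero reduced cohomology class, so $\pi_h^*$ is injective on $\mathfrak{H}^k \cong \mathfrak{Z}^k/\overline{\mathfrak{B}^k}$.

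I expect the main technical point to be the careful bookkeeping that relates the abstract induced map on the quotient $\mathfrak{Z}^k/\overline{\mathfrak{B}^k}$ to the concrete inequality stated in terms of harmonic forms $q \in \mathfrak{H}^k$ — specifically, justifying that it suffices to test injectivity on harmonic representatives, and that "$\pi_h^k q$ is a coboundary" can be promoted through $i_h^k$ to "$i_h^k \pi_h^k q \in \overline{\mathfrak{B}^k}$" using continuity of $i_h^k$. The rest is a one-line Cauchy--Schwarz argument. Finally, once injectivity and surjectivity of $\pi_h^*$ are both in hand, the relation $\pi_h^k \circ i_h^k = \mathrm{id}_{W_h^k}$ descends to cohomology to show $i_h^*$ is a right inverse of $\pi_h^*$; since $\pi_h^*$ is bijective this makes $i_h^*$ its genuine inverse, so both induced maps are isomorphisms, as claimed.
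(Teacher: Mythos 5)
Your proposal is correct and follows essentially the same route as the paper: reduce injectivity of the induced map to harmonic representatives (the paper does this by writing a cocycle $z = q + b$ with $q \in \mathfrak{H}^k$, $b \in \overline{\mathfrak{B}^k}$), push $\pi_h q \in \overline{\mathfrak{B}_h^k}$ through the morphism $i_h$ to get $i_h \pi_h q \in \overline{\mathfrak{B}^k} \perp q$, and contradict the hypothesis. The only cosmetic difference is that you conclude via Cauchy--Schwarz where the paper invokes orthogonality of $i_h \pi_h q$ and $q$ directly; both yield $\lVert q \rVert \leq \lVert q - i_h \pi_h q \rVert$, forcing $q = 0$.
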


\begin{proof}
  Since $ \pi _h $ induces a surjection on reduced cohomology, it
  suffices to show that this is also an injection.  That is, given $ z
  \in \mathfrak{Z} ^k $ with $ \pi _h z \in \overline{ \mathfrak{B} _h
    ^k } $, we must demonstrate that $ z \in \overline{ \mathfrak{B}
    ^k } $.  Using the weak Hodge decomposition, write $ z = q + b $,
  where $ q \in \mathfrak{H} ^k $ and $ b \in \overline{ \mathfrak{B}
    ^k } $.  By assumption, $ \pi _h z \in \overline{ \mathfrak{B} _h
    ^k } $, and since $ \pi _h $ is a morphism, $ \pi _h b \in
  \overline{ \mathfrak{B} _h ^k } $ as well.  Thus, $ \pi _h q = \pi
  _h z - \pi _h b \in \overline{ \mathfrak{B} _h ^k } $, and since $ i
  _h $ is also a morphism, $ i _h \pi _h q \in \overline{ \mathfrak{B}
    ^k } \perp \mathfrak{H} ^k $.  Therefore, $ i _h \pi _h q \perp q
  $, which implies that $q$ violates the inequality above, so we must
  have $ q = 0 $ and hence $ z \in \overline{ \mathfrak{B} ^k } $.
\end{proof}

\begin{corollary}
  If $ \left( W, \mathrm{d} \right) $ and $ \left( W _h , \mathrm{d}
    _h \right) $ are closed Hilbert complexes, with morphisms $ \pi _h
  $ and $ i _h $ satisfying the above assumptions, then $ \pi _h $
  (and thus $ i _h $) induces an isomorphism on cohomology.
\end{corollary}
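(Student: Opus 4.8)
The plan is to deduce the corollary from the immediately preceding theorem by upgrading "reduced cohomology" to "cohomology" using closedness. The key observation is that for a closed Hilbert complex, the coboundary spaces $\mathfrak{B}^k$ and $\mathfrak{B}_h^k$ are closed, so $\overline{\mathfrak{B}^k} = \mathfrak{B}^k$ and $\overline{\mathfrak{B}_h^k} = \mathfrak{B}_h^k$; hence reduced cohomology $\mathfrak{Z}^k/\overline{\mathfrak{B}^k}$ coincides with ordinary cohomology $\mathfrak{Z}^k/\mathfrak{B}^k$, and likewise for the $h$-complex. Therefore the isomorphism on reduced cohomology furnished by the previous theorem is literally an isomorphism on cohomology.

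First I would invoke the hypothesis that both $(W,\mathrm{d})$ and $(W_h,\mathrm{d}_h)$ are closed, together with the earlier remark that in a closed complex the (reduced) cohomology spaces coincide with the ordinary cohomology spaces $\mathfrak{Z}^k/\mathfrak{B}^k$ and $\mathfrak{Z}_h^k/\mathfrak{B}_h^k$ respectively. (Note that closedness of $(W_h,\mathrm{d}_h)$ is not actually an independent assumption here — it follows from closedness of $(W,\mathrm{d})$ by the second theorem in this subsection — but it is convenient to have it stated.) Next I would apply the preceding theorem: under the stated assumptions on $\pi_h$ and $i_h$, the map $\pi_h$ (and hence $i_h$) induces an isomorphism on reduced cohomology. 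Finally, since in the closed case these maps are, tautologically, the very same induced maps on $\mathfrak{Z}^k/\mathfrak{B}^k$, the conclusion follows. One may also recall that the induced maps $[\mathfrak{z}]\mapsto[\mathfrak{z}_h]$ composed in either order give the identity on the relevant quotient, so the two induced maps are mutually inverse isomorphisms.

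There is essentially no obstacle here: the corollary is a direct specialization of the theorem, and the only content is the elementary fact that closedness collapses reduced cohomology onto ordinary cohomology. The proof is therefore a single short paragraph — one sentence citing closedness to identify the two notions of cohomology, one sentence invoking the theorem, and a concluding sentence. If anything requires a word of care, it is simply confirming that the induced map "$[z]\mapsto[\pi_h z]$" is well-defined on the honest quotient $\mathfrak{Z}^k/\mathfrak{B}^k$, which is immediate since $\pi_h$ is a morphism and thus carries $\mathfrak{B}^k$ into $\mathfrak{B}_h^k$.
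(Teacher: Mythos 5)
Your proposal is correct and follows exactly the route the paper intends: the paper states this corollary without proof precisely because, for closed complexes, $\mathfrak{B}^k$ and $\mathfrak{B}_h^k$ are closed, so the reduced cohomology spaces coincide with the ordinary cohomology spaces $\mathfrak{Z}^k/\mathfrak{B}^k$ and $\mathfrak{Z}_h^k/\mathfrak{B}_h^k$, and the isomorphism from the preceding theorem carries over verbatim. Your side remarks (that closedness of $(W_h,\mathrm{d}_h)$ already follows from closedness of $(W,\mathrm{d})$, and that the induced maps are well-defined and mutually inverse) are accurate and consistent with the earlier results in that subsection.
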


\begin{remark}
  This result is slightly more general than \citet[Theorem
  3.4]{ArFaWi2010}, which only treated the case of a bounded, closed
  Hilbert complex.  However, the proof is essentially identical.
\end{remark}

Next, suppose that $ \left( V , \mathrm{d} \right) $ and $ \left( V _h
  , \mathrm{d} _h \right) $ are bounded, closed Hilbert complexes; for
example, they may be the domain complexes corresponding, respectively,
to closed complexes $ \left( W, \mathrm{d} \right) $ and $ \left( W _h
  , \mathrm{d} _h \right) $.  We now show that the Poincar\'e
inequality for $ \left( V _h , \mathrm{d} _h \right) $ can be written
entirely in terms of the Poincar\'e constant for $ \left( V,
  \mathrm{d} \right) $, denoted by $ c _P $, along with the operator
norms of $ i _h $ and $ \pi _h $.

\begin{theorem}
  \label{thm:discretePoincare}
  Let $ \left( V, \mathrm{d} \right) $ and $ \left( V _h , \mathrm{d}
    _h \right) $ be bounded, closed Hilbert complexes, with morphisms
  $ i _h \colon V _h \hookrightarrow V $ and $ \pi _h \colon V
  \rightarrow V _h $ such that $ \pi _h ^k \circ i _h ^k =
  \mathrm{id} _{ V _h ^k } $ for each $k$.  Then
  \begin{equation*}
    \left\lVert v _h \right\rVert _{V _h} \leq c _P  \bigl\lVert \pi _h
    ^k \bigr\rVert \bigl\lVert i _h ^{k+1} \bigr\rVert \left\lVert
      \mathrm{d} _h v _h \right\rVert _{V_h} ,  \quad \forall v _h \in
    \mathfrak{Z}  _h ^{ k \perp}.
  \end{equation*}
\end{theorem}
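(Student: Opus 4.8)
The plan is to transport the Poincar\'e inequality from the ``large'' complex $ \left( V, \mathrm{d} \right) $ to the approximating complex $ \left( V _h , \mathrm{d} _h \right) $ along the two morphisms $ i _h $ and $ \pi _h $. The obvious temptation is to apply \autoref{lem:poincareInequality} directly to $ i _h ^k v _h $, but this fails: even if $ v _h \in \mathfrak{Z} _h ^{ k \perp } $, there is no reason for $ i _h ^k v _h $ to lie in $ \mathfrak{Z} ^{ k \perp } $. The fix is to first pass $ v _h $ up to $ V ^k $ via $ i _h ^k $, Hodge-decompose \emph{there}, apply the (already available) Poincar\'e inequality to the part that lives in $ \mathfrak{Z} ^{ k \perp } $, and then push the result back down via $ \pi _h ^k $.

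Concretely, I would fix $ v _h \in \mathfrak{Z} _h ^{ k \perp } $ and use the $V$-orthogonal Hodge decomposition (valid since $ \left( V, \mathrm{d} \right) $ is closed) to write $ i _h ^k v _h = z + w $ with $ z \in \mathfrak{Z} ^k $ and $ w \in \mathfrak{Z} ^{ k \perp } $. Since $z$ is a cocycle, $ \mathrm{d} ^k ( i _h ^k v _h ) = \mathrm{d} ^k w $; and since $ i _h $ is a morphism, $ \mathrm{d} ^k i _h ^k v _h = i _h ^{ k + 1 } \mathrm{d} _h ^k v _h $. Applying \autoref{lem:poincareInequality} to $ w $ then gives
\begin{equation*}
  \left\lVert w \right\rVert _V \leq c _P \bigl\lVert \mathrm{d} ^k w \bigr\rVert _V = c _P \bigl\lVert i _h ^{ k + 1 } \mathrm{d} _h ^k v _h \bigr\rVert _V \leq c _P \bigl\lVert i _h ^{ k + 1 } \bigr\rVert \left\lVert \mathrm{d} _h ^k v _h \right\rVert _{ V _h } .
\end{equation*}

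It remains to bound $ \left\lVert v _h \right\rVert _{ V _h } $ by $ \left\lVert w \right\rVert _V $. Because $ \pi _h ^k \circ i _h ^k = \mathrm{id} $, we have $ v _h = \pi _h ^k z + \pi _h ^k w $. The key point is that $ \pi _h ^k z $ is a \emph{discrete} cocycle: as $ \pi _h $ is a morphism and $ \mathrm{d} ^k z = 0 $, we get $ \mathrm{d} _h ^k \pi _h ^k z = \pi _h ^{ k + 1 } \mathrm{d} ^k z = 0 $, so $ \pi _h ^k z \in \mathfrak{Z} _h ^k $. Since $ v _h \perp _{ V _h } \mathfrak{Z} _h ^k $ by hypothesis, $ \langle v _h , \pi _h ^k z \rangle _{ V _h } = 0 $, hence
\begin{equation*}
  \left\lVert v _h \right\rVert _{ V _h } ^2 = \langle v _h , \pi _h ^k w \rangle _{ V _h } \leq \left\lVert v _h \right\rVert _{ V _h } \bigl\lVert \pi _h ^k \bigr\rVert \left\lVert w \right\rVert _V ,
\end{equation*}
and dividing through and combining with the previous estimate yields the claim. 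I expect the only genuinely delicate step is this last one: one must observe that the ``bad'' cocycle component $z$ of $ i _h ^k v _h $ is killed after applying $ \pi _h ^k $ and pairing against $ v _h $, precisely because $ \pi _h ^k $ maps cocycles to cocycles and $ v _h $ was chosen $ V _h $-orthogonal to $ \mathfrak{Z} _h ^k $. Everything else is a routine chain of morphism identities and operator-norm bounds.
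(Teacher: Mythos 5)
Your proof is correct and is essentially the paper's argument: your $w$ (the $\mathfrak{Z}^{k\perp}$-component of $i_h^k v_h$) is exactly the paper's auxiliary element $z\in\mathfrak{Z}^{k\perp}$ with $\mathrm{d}z=\mathrm{d}i_h v_h$, and your final step, killing $\pi_h^k z$ against $v_h$ because $\pi_h^k$ maps cocycles to cocycles, is the same orthogonality argument the paper phrases as $v_h-\pi_h z\in\mathfrak{Z}_h^k\perp v_h$. No substantive difference.
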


\begin{proof}
  Given $ v _h \in \mathfrak{Z} _h ^{ k \perp } $, let $ z \in
  \mathfrak{Z} ^{k \perp} $ be the unique element such that $
  \mathrm{d} z = \mathrm{d} i _h v _h = i _h \mathrm{d} _h v _h $.  Then,
  applying the abstract Poincar\'e inequality on $V$,
  \begin{equation*}
    \left\lVert z \right\rVert _V \leq c _P \left\lVert \mathrm{d} z
    \right\rVert _V = c _P \left\lVert i _h \mathrm{d} _h v _h 
    \right\rVert _V \leq c _P
    \bigl\lVert i _h ^{ k + 1 } \bigr\rVert \left\lVert \mathrm{d} _h v _h 
    \right\rVert _{V_h} .
  \end{equation*}
  It now suffices to show $ \left\lVert v _h \right\rVert _{ V _h } \leq
  \bigl\lVert \pi _h ^k \bigr\rVert \left\lVert z \right\rVert _V $.
  Observe that $ v _h - \pi _h z \in V _h ^k $, and furthermore,
  \begin{equation*}
    \mathrm{d} _h \pi _h z = \pi _h \mathrm{d} z = \pi _h i _h
    \mathrm{d} _h v _h = \mathrm{d} _h v _h ,
  \end{equation*}
  so $ v _h - \pi _h z \in \mathfrak{Z} _h ^k \perp v _h $.  Therefore,
  \begin{equation*}
    \left\lVert v _h \right\rVert _{ V _h } ^2 = \left\langle v _h, \pi _h z
    \right\rangle _{ V _h } + \left\langle v _h , v _h - \pi _h z
    \right\rangle _{ V _h } = \left\langle v _h , \pi _h z \right\rangle _{
      V _h } \leq \left\lVert v _h \right\rVert _{ V _h } \bigl\lVert \pi
    _h ^k \bigr\rVert  \left\lVert z \right\rVert _V ,
  \end{equation*}
  and the result follows.
\end{proof}

\begin{corollary}
  If $ \left( V, \mathrm{d} \right) $ and $ \left( V _h , \mathrm{d}
    _h \right) $ are the domain complexes corresponding, respectively,
  to closed Hilbert complexes $ \left( W, \mathrm{d} \right) $ and $
  \left( W _h , \mathrm{d} _h \right) $, then
  \begin{equation*}
    \left\lVert v _h \right\rVert _{V _h} \leq c _P  \bigl\lVert \pi _h
    ^k \bigr\rVert \bigl\lVert i _h ^{k+1} \bigr\rVert \left\lVert
      \mathrm{d} _h v _h \right\rVert _h ,  \quad \forall v _h \in
    \mathfrak{Z}  _h ^{ k \perp}.
  \end{equation*}
\end{corollary}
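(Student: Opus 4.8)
The plan is to reduce this directly to \autoref{thm:discretePoincare}, in exactly the same way that the corollary to \autoref{lem:poincareInequality} was deduced from that lemma. First I would observe that $\left(V,\mathrm{d}\right)$ and $\left(V_h,\mathrm{d}_h\right)$, being the domain complexes of the closed Hilbert complexes $\left(W,\mathrm{d}\right)$ and $\left(W_h,\mathrm{d}_h\right)$, are themselves bounded, closed Hilbert complexes, as was noted in the remark following the definition of the domain complex. So \autoref{thm:discretePoincare} is applicable as soon as we exhibit the required morphisms between them.

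The second step is to check that $i_h$ and $\pi_h$, which by hypothesis are morphisms of the Hilbert complexes $\left(W_h,\mathrm{d}_h\right)$ and $\left(W,\mathrm{d}\right)$, restrict to morphisms of the associated domain complexes. The conditions $i_h^k V_h^k \subset V^k$ and $\pi_h^k V^k \subset V_h^k$, together with the commutation relations $\mathrm{d}^k i_h^k = i_h^{k+1}\mathrm{d}_h^k$ and $\mathrm{d}_h^k \pi_h^k = \pi_h^{k+1}\mathrm{d}^k$, are exactly the morphism conditions for $W_h \to W$ and $W \to W_h$ and so are inherited unchanged. The only remaining point is boundedness with respect to the graph inner products, which follows from a one-line estimate, e.g.
\begin{equation*}
  \bigl\lVert i_h^k v_h \bigr\rVert_V^2 = \bigl\lVert i_h^k v_h \bigr\rVert^2 + \bigl\lVert i_h^{k+1}\mathrm{d}_h^k v_h \bigr\rVert^2 \le \max\!\bigl\{ \bigl\lVert i_h^k \bigr\rVert^2, \bigl\lVert i_h^{k+1} \bigr\rVert^2 \bigr\}\,\lVert v_h \rVert_{V_h}^2 ,
\end{equation*}
and similarly for $\pi_h$. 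Moreover, $\pi_h^k \circ i_h^k = \mathrm{id}_{W_h^k}$ restricts to $\mathrm{id}_{V_h^k}$. Hence all hypotheses of \autoref{thm:discretePoincare} are satisfied.

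Applying \autoref{thm:discretePoincare} then gives, for every $v_h \in \mathfrak{Z}_h^{k\perp}$,
\begin{equation*}
  \lVert v_h \rVert_{V_h} \le c_P\,\bigl\lVert \pi_h^k \bigr\rVert\,\bigl\lVert i_h^{k+1} \bigr\rVert\,\lVert \mathrm{d}_h v_h \rVert_{V_h} ,
\end{equation*}
so it remains only to replace the graph norm on the right-hand side by the $W_h$-norm. But $\mathrm{d}_h v_h$ is a coboundary, hence $\mathrm{d}_h^{k+1}\bigl(\mathrm{d}_h^k v_h\bigr) = 0$, and therefore $\lVert \mathrm{d}_h v_h \rVert_{V_h}^2 = \lVert \mathrm{d}_h v_h \rVert_h^2 + \bigl\lVert \mathrm{d}_h^{k+1}\mathrm{d}_h^k v_h \bigr\rVert_h^2 = \lVert \mathrm{d}_h v_h \rVert_h^2$. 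Substituting yields the claimed inequality. I do not expect a genuine obstacle here: the substance lies entirely in \autoref{thm:discretePoincare}, and this corollary merely records the fact that on a domain complex the graph norm and the $W$-norm coincide on coboundaries, together with the routine verification that the ambient morphisms descend to the domain complexes.
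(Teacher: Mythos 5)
Your proposal is correct and follows exactly the route the paper intends (the corollary is left as immediate there): domain complexes of closed complexes are bounded and closed, the $W$-level morphisms restrict to graph-norm-bounded morphisms of the domain complexes, and $\left\lVert \mathrm{d}_h v_h \right\rVert_{V_h} = \left\lVert \mathrm{d}_h v_h \right\rVert_h$ since $\mathrm{d}_h \circ \mathrm{d}_h = 0$, so \autoref{thm:discretePoincare} yields the stated bound.
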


Finally, given the importance of the projection morphism $ \pi _h $ in
finite element exterior calculus, we now prove a short but useful
result on the existence of such projections.  In particular, the next
theorem states how a projection morphism on another complex, $ W
^\prime $, can be ``pulled back'' to obtain one on $W$, as pictured in
the following diagram:
\begin{equation*}
  \xymatrix{
    W \ar[rr] ^f  \ar@<.3em>@{-->}[rd] ^{ \pi _h } & & W ^\prime
    \ar@<-.3em>[ld] _{ \pi _h ^\prime } \\
    & \ar@<.3em>[ul] ^{ i _h } W _h \ar@<-.3em>[ur] _{ i _h ^\prime 
    }   &
  } .
\end{equation*} 
In \autoref{sec:diffForms}, this will allow us to obtain a projection
morphism for the de~Rham complex on a manifold, by pulling back the
usual projection defined on its piecewise-linear triangulation.

\begin{theorem}
  \label{thm:projection}
  Let $ \left( W, \mathrm{d} \right) $ and $ \left( W _h , \mathrm{d}
    _h \right) $ be Hilbert complexes with an injection morphism $ i
  _h \colon W _h \hookrightarrow W $.  Suppose there exists another
  complex $ \left( W ^\prime, \mathrm{d} ^\prime \right) $ and a
  morphism $ f \colon W \rightarrow W ^\prime $, such that $ i _h
  ^\prime = f \circ i _h \colon W _h \hookrightarrow W ^\prime $ is
  injective and has a corresponding projection morphism $ \pi _h
  ^\prime \colon W ^\prime \rightarrow W _h $ with $ \pi _h ^\prime
  \circ i _h ^\prime = \mathrm{id} _{ W _h } $.  Then there also
  exists a projection morphism $ \pi _h \colon W \rightarrow W _h $
  such that $ \pi _h \circ i _h = \mathrm{id} _{ W _h } $.
\end{theorem}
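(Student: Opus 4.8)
The plan is to define the candidate projection by the obvious composition suggested by the diagram, namely $\pi_h := \pi_h' \circ f \colon W \to W_h$, and then verify the two required properties: that $\pi_h$ is a morphism of Hilbert complexes, and that $\pi_h \circ i_h = \mathrm{id}_{W_h}$. The second property is immediate by associativity: $\pi_h \circ i_h = \pi_h' \circ f \circ i_h = \pi_h' \circ (f \circ i_h) = \pi_h' \circ i_h' = \mathrm{id}_{W_h}$, using the hypothesis on $\pi_h'$ and $i_h'$. So the real content is checking that $\pi_h$ is a morphism.

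For the morphism property, I would simply observe that $f$ is a morphism $W \to W'$ by hypothesis and $\pi_h'$ is a morphism $W' \to W_h$ by hypothesis, and that a composition of morphisms of Hilbert complexes is again a morphism. Concretely, each component $\pi_h^k = (\pi_h')^k \circ f^k$ is a composition of bounded linear maps, hence bounded; it carries $V^k$ into $V'^k$ into $V_h^k$, so $\pi_h^k V^k \subset V_h^k$; and the required commutation follows by stacking the two commuting squares: $\mathrm{d}_h^k \pi_h^k = \mathrm{d}_h^k (\pi_h')^k f^k = (\pi_h')^{k+1} \mathrm{d}'^k f^k = (\pi_h')^{k+1} f^{k+1} \mathrm{d}^k = \pi_h^{k+1} \mathrm{d}^k$, where the first commutation uses that $\pi_h'$ is a morphism and the second uses that $f$ is a morphism.

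There is essentially no obstacle here — the statement is a formal diagram-chase, and the only thing one needs is that morphisms of Hilbert complexes compose (which is transparent from \autoref{def:morphism}, since each of the three defining conditions — boundedness, preservation of domains, and commutation with differentials — is stable under composition). I would keep the write-up to a few lines, perhaps remarking explicitly that one should first check composability of morphisms in general and then apply it with $\pi_h = \pi_h' \circ f$, noting also that injectivity of $i_h$ was not even needed for the conclusion (it is part of the hypotheses only because it is the standing assumption for the setup). If desired, one could also record the operator-norm bound $\lVert \pi_h^k \rVert \leq \lVert (\pi_h')^k \rVert \, \lVert f^k \rVert$, which is useful when later applying \autoref{thm:discretePoincare}.
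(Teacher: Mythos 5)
Your proposal is correct and follows exactly the paper's argument: the paper also defines $\pi _h = \pi _h ^\prime \circ f$ and verifies $\pi _h \circ i _h = \mathrm{id} _{ W _h }$ by the same composition, leaving the (routine) fact that morphisms compose implicit, which you simply spell out.
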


\begin{proof}
  Take $ \pi _h = \pi _h ^\prime \circ f $.  Then $ \pi _h \circ i _h
  = \pi _h ^\prime \circ f \circ i _h = \pi _h ^\prime \circ i _h
  ^\prime = \mathrm{id} _{ W _h } $.
\end{proof}

\subsection{Modified inner product and Hodge decomposition}
As noted in the previous section, this generalized framework
introduces some new complications, due to the possible non-unitarity
of $ i _h $.  The following result shows that the subspace $ i _h W _h
\subset W $ can be identified with $ W _h $, endowed with a {\em
  modified} inner product $ \left\langle J _h \cdot, \cdot
\right\rangle _h $ instead of $ \left\langle \cdot , \cdot
\right\rangle _h $.  This defines a modified Hilbert complex,
which will be denoted by $ \left( i _h ^\ast W , \mathrm{d} _h \right)
$.

\begin{theorem}
  \label{thm:jacobianDeterminant}
  Let $ i _h \colon W _h \hookrightarrow W $ be a morphism of Hilbert
  complexes, and define $ J _h ^k = i _h ^{ k \ast } i _h ^k \colon W
  _h ^k \rightarrow W _h ^k $ for each $k$.  Then
  \begin{equation*}
    \left\langle J _h u _h , v _h \right\rangle _h =    \left\langle i
      _h u _h , i _h v _h \right\rangle , \quad \forall u _h , v _h
    \in W _h ^k ,
  \end{equation*}
  is an inner product, which defines a Hilbert space structure on $ W
  _h ^k $.
\end{theorem}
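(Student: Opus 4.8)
The plan is to verify the inner-product axioms directly from the defining formula, and then to establish completeness by a norm-equivalence argument using the projection morphism $\pi_h$.

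First, I would observe that the displayed identity is just the defining property of the Hilbert-space adjoint: for $u_h, v_h \in W_h^k$,
\begin{equation*}
  \langle J_h^k u_h, v_h \rangle_h = \langle i_h^{k\ast} i_h^k u_h, v_h \rangle_h = \langle i_h^k u_h, i_h^k v_h \rangle ,
\end{equation*}
which uses nothing beyond boundedness of $i_h^k$ (so that $i_h^{k\ast}$ is defined). From this formula, bilinearity and symmetry of $(u_h, v_h) \mapsto \langle J_h^k u_h, v_h \rangle_h$ follow at once from the corresponding properties of $\langle \cdot, \cdot \rangle$ together with linearity of $i_h^k$; equivalently, $J_h^k = (i_h^k)^\ast i_h^k$ is self-adjoint and positive semidefinite. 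For positive-definiteness I would note $\langle J_h^k u_h, u_h \rangle_h = \lVert i_h^k u_h \rVert^2 \geq 0$, with equality only when $i_h^k u_h = 0$, hence only when $u_h = 0$ since $i_h^k$ is injective.

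It then remains to check that $W_h^k$ equipped with $\langle J_h^k \cdot, \cdot \rangle_h$ is complete. Here I would invoke the standing hypothesis of this section, namely that there is a morphism $\pi_h$ with $\pi_h^k \circ i_h^k = \mathrm{id}_{W_h^k}$. On one hand $i_h^k$ is bounded, so $\lVert i_h^k u_h \rVert \leq \lVert i_h^k \rVert \, \lVert u_h \rVert_h$; on the other hand $\lVert u_h \rVert_h = \lVert \pi_h^k i_h^k u_h \rVert_h \leq \lVert \pi_h^k \rVert \, \lVert i_h^k u_h \rVert$. Thus the modified norm $u_h \mapsto \langle J_h^k u_h, u_h \rangle_h^{1/2} = \lVert i_h^k u_h \rVert$ is equivalent to the original norm $\lVert \cdot \rVert_h$ on $W_h^k$. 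Since $(W_h^k, \langle \cdot, \cdot \rangle_h)$ is a Hilbert space by assumption, an equivalent norm is also complete, so $\langle J_h^k \cdot, \cdot \rangle_h$ indeed defines a Hilbert space structure on $W_h^k$.

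The main obstacle, such as it is, lies precisely in this last step: positive-definiteness of $J_h^k$ needs only injectivity of $i_h^k$, but completeness of the modified norm genuinely requires $i_h^k$ to be bounded below — equivalently, the range $i_h^k W_h^k$ to be closed in $W^k$ — which is why the projection $\pi_h$ must be brought in. The remaining verifications are entirely routine.
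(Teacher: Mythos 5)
Your proof is correct, and on the algebraic side (the adjoint identity $\langle J_h u_h, v_h\rangle_h = \langle i_h u_h, i_h v_h\rangle$, bilinearity, symmetry, and positive-definiteness via injectivity of $i_h^k$) it coincides with the paper's argument. Where you diverge is the completeness step. The paper disposes of it in one line, asserting that $W_h^k$ is closed in the induced norm because $\lVert i_h v_h \rVert \le \lVert i_h \rVert \, \lVert v_h \rVert_h$ and $i_h$ is bounded; that inequality only shows the new norm is dominated by the old one, which by itself does not yield completeness (a bounded injective map that is not bounded below, e.g.\ a compact diagonal injection on $\ell^2$, produces an incomplete induced norm). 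You instead invoke the section's standing hypothesis that there is a bounded projection morphism $\pi_h$ with $\pi_h^k \circ i_h^k = \mathrm{id}_{W_h^k}$, deduce the reverse bound $\lVert v_h \rVert_h \le \lVert \pi_h^k \rVert \, \lVert i_h^k v_h \rVert$, and conclude by norm equivalence. This is a genuinely different, and more careful, justification of the completeness claim: it isolates the fact that what is really needed is $i_h^k$ bounded below (equivalently, $i_h^k W_h^k$ closed in $W^k$), at the modest cost of appealing to $\pi_h$, which does not appear in the theorem's hypotheses but is part of the standing setup of the section in which the theorem sits. The paper's route is shorter but leaves exactly this point implicit, so your version can be read as filling in the gap rather than merely rederiving the result.
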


\begin{proof}
  $ \left\langle i _h u _h , i _h v _h \right\rangle = \left\langle i
    _h ^\ast i _h u _h , v _h \right\rangle _h = \left\langle J _h u
    _h , v _h \right\rangle _h $.  This is an inner product, since $ i
  _h $ is linear and injective.  Moreover, $ W _h ^k $ is closed with
  respect to the induced norm, since $ \left\lVert i _h v _h
  \right\rVert \leq \left\lVert i _h \right\rVert \left\lVert v _h
  \right\rVert $ and $ i _h $ is bounded, so this is indeed a Hilbert
  space.
\end{proof}

\begin{remark}
  We use the notation $ J _h $ due to the similarity with the Jacobian
  determinant used in the ``change of variables'' formula for
  integration.  Note that, although each $ J _h ^k \colon W _h ^k
  \rightarrow W _h ^k $ is a bounded linear map, $ J _h $ is not
  necessarily a Hilbert complex automorphism.  This is because, in
  general, $ \mathrm{d} $ commutes with $ i _h $ but not with its
  adjoint $ i _h ^\ast $.  Also, clearly $ i _h ^k $ is unitary if and
  only if $ J _h ^k = \mathrm{id} _{ W _h ^k } $.
\end{remark}

Now, if $ i _h $ does not preserve the inner product, in particular it
does not preserve orthogonality: that is, $ u _h \perp v _h $ does not
imply $ i _h u _h \perp i _h v _h $.  This has significant
implications for the Hodge decomposition, since although $ W _h ^k =
\overline{\mathfrak{B} _h ^k} \oplus \mathfrak{H} _h ^k \oplus
\mathfrak{Z} _h ^{\smash{k \perp _{W_h}}} $ is $ W _h $-orthogonal, it
is generally not $ i _h ^\ast W $-orthogonal.  Therefore, we define
the new, modified subspaces
\begin{equation*}
  \mathfrak{H} _h ^{\prime k} = \left\{ z \in \mathfrak{Z}  _h
    ^k \;\middle\vert\; i _h z \perp i _h \mathfrak{B}  _h ^k
  \right\}, \qquad \mathfrak{Z} _h ^{\smash{k \perp \prime _W
    }} = \left\{ v \in W _h ^k \;\middle\vert\; i _h v \perp i _h
    \mathfrak{Z}  _h ^k   \right\} .
\end{equation*}
This gives a modified Hodge decomposition $ W _h ^k = \overline{
  \mathfrak{B} _h ^k} \oplus \mathfrak{H} _h ^{\prime k} \oplus
\mathfrak{Z} _h ^{\smash{ k \perp\prime_W }} $, which is no
longer necessarily $ W _h $-orthogonal, but is now $ i _h ^\ast W
$-orthogonal.  As before, this also gives a modified Hodge
decomposition for the domain complex $ V _h ^k = \overline{
  \mathfrak{B} _h ^k} \oplus \mathfrak{H} _h ^{\prime k} \oplus
\mathfrak{Z} _h ^{ k \perp \prime } $.

\subsection{Stability and convergence of the mixed method}

Let $ \left( W, \mathrm{d} \right) $ be a closed Hilbert complex with
domain complex $ \left( V, \mathrm{d} \right) $.  To approximate a
solution to the mixed variational problem \eqref{eqn:mixedProblem},
suppose that $ \left( W _h , \mathrm{d} _h \right) $ is another
Hilbert complex with domain complex $ \left( V _h , \mathrm{d} _h
\right) $, and that we have morphisms $ i _h \colon V _h
\hookrightarrow V $ and $ \pi _h \colon V \rightarrow V _h $ such that
$ \pi _h ^k \circ i _h ^k = \mathrm{id} _{ V _h ^k } $ for each $k$.
We assume that $ i _h $ is $W$-bounded, so that it also can be
extended to $ W _h \hookrightarrow W $, but that $ \pi _h $ might only
be $V$-bounded.  Then consider the solution of the following mixed
variational problem: Find $ \left( \sigma _h , u _h , p _h \right) \in
V _h ^{ k - 1 } \times V _h ^k \times \mathfrak{H} _h ^k $ satisfying
\begin{equation}
  \label{eqn:discreteProblem}
  \begin{alignedat}{2}
    \left\langle \sigma _h , \tau _h \right\rangle _h - \left\langle u
      _h , \mathrm{d} _h \tau _h \right\rangle _h &= 0, &\quad \forall
    \tau _h &\in V _h ^{ k - 1 } ,\\
    \left\langle \mathrm{d} _h \sigma _h , v _h \right\rangle _h +
    \left\langle \mathrm{d} _h u _h , \mathrm{d} _h v _h \right\rangle
    _h + \left\langle p _h , v _h \right\rangle _h &= \left\langle f
      _h , v _h \right\rangle _h , &\quad \forall v _h
    &\in V _h ^k , \\
    \left\langle u _h , q _h \right\rangle _h &= 0 , &\quad \forall q
    _h &\in \mathfrak{H} _h ^k .
  \end{alignedat}
\end{equation}
This corresponds to the generalized variational problem
\eqref{eqn:generalizedGalerkin} with bilinear form
\begin{multline*}
  B _h \left( \sigma _h , u _h , p _h ; \tau _h , v _h , q _h \right)
  = \left\langle \sigma _h , \tau _h \right\rangle _h - \left\langle u
    _h , \mathrm{d}_h \tau _h \right\rangle _h  \\
  + \left\langle \mathrm{d}_h \sigma _h , v _h \right\rangle _h +
  \left\langle \mathrm{d}_h u _h , \mathrm{d}_h v _h \right\rangle _h +
  \left\langle p _h , v _h \right\rangle _h - \left\langle u _h , q _h
  \right\rangle _h 
\end{multline*}
and functional $ F _h \left( \tau _h , v _h , q _h \right) =
\left\langle f _h , v _h \right\rangle _h $.  The following theorem
establishes the inf-sup condition for the mixed method
\eqref{eqn:discreteProblem}.

\begin{theorem}
  \label{thm:discreteStability}
  Let $ \left( V , \mathrm{d} \right) $ be the domain complex of a
  closed Hilbert complex $ \left( W , \mathrm{d} \right) $, and let $
  \left( V _h , \mathrm{d} _h \right) $ be a family of domain
  complexes of closed Hilbert complexes $ \left( W _h , \mathrm{d} _h
  \right) $, equipped with uniformly $W$-bounded inclusion morphisms $
  i _h \colon V _h \hookrightarrow V $ and $V$-bounded projection
  morphisms $ \pi _h \colon V \rightarrow V _h $ satisfying $ \pi _h
  ^k \circ i _h ^k = \mathrm{id} _{ V _h ^k } $.  Then there exists a
  constant $ \gamma _h > 0 $, depending only on $ c _P $ and the norms
  of $ i _h $ and $ \pi _h $, such that for any $ \left( \sigma _h , u
    _h , p _h \right) \in V _h ^{ k - 1 } \times V _h ^k \times
  \mathfrak{H} _h ^k $, there exists $ \left( \tau _h , v _h , q _h
  \right) \in V _h ^{ k - 1 } \times V _h ^k \times \mathfrak{H} _h ^k
  $ where
  \begin{multline*}
    B _h \left( \sigma _h , u _h , p _h ; \tau _h , v _h , q _h
    \right) \\
    \geq \gamma _h \left( \left\lVert \sigma _h \right\rVert _{ V _h }
      + \left\lVert u _h \right\rVert _{ V _h } + \left\lVert p _h
      \right\rVert _h \right) \left( \left\lVert \tau _h \right\rVert
      _{ V _h } + \left\lVert v _h \right\rVert _{ V _h } +
      \left\lVert q _h \right\rVert _h \right) .
  \end{multline*}
\end{theorem}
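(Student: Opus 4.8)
The plan is to recognize that the discrete problem \eqref{eqn:discreteProblem}, together with its bilinear form $B_h$, is precisely the abstract mixed variational problem \eqref{eqn:mixedProblem} written for the Hilbert complex $\left( V_h, \mathrm{d}_h \right)$ equipped with the inner products $\left\langle \cdot, \cdot \right\rangle_h$ --- and then to invoke the abstract inf-sup estimate \autoref{thm:inf-sup} verbatim, tracking the constant. The key observation making this legitimate is that $B_h$ involves only $\mathrm{d}_h$ and the $h$-inner products, never the map $i_h$; so the possible non-unitarity of $i_h$ --- the source of the new complications elsewhere in this section --- plays no role whatsoever in the stability analysis. (It will reappear only in the convergence analysis, through the right-hand side $f_h$ and the modified Hodge decomposition of the preceding subsection.) Likewise, the harmonic space appearing in \eqref{eqn:discreteProblem} is the intrinsic $W_h$-orthogonal space $\mathfrak{H}_h^k$, not the modified space $\mathfrak{H}_h^{\prime k}$, so that the whole problem is genuinely internal to $\left( V_h, \mathrm{d}_h \right)$.

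First I would note that, by hypothesis, $\left( V_h, \mathrm{d}_h \right)$ is the domain complex of a closed Hilbert complex $\left( W_h, \mathrm{d}_h \right)$, hence a bounded, closed Hilbert complex, so the hypotheses of \autoref{thm:inf-sup} are met for it. Applying that theorem produces a constant $\gamma_h > 0$ with exactly the asserted inf-sup property, where a priori $\gamma_h$ depends only on the Poincar\'e constant of $\left( V_h, \mathrm{d}_h \right)$. Second, I would control that Poincar\'e constant using \autoref{thm:discretePoincare}: on $\mathfrak{Z}_h^{k\perp}$ the Poincar\'e inequality holds with constant $c_P \bigl\lVert \pi_h^k \bigr\rVert \bigl\lVert i_h^{k+1} \bigr\rVert$, and taking the maximum over the finitely many degrees $k$ gives a single Poincar\'e constant for the complex depending only on $c_P$ and on $\bigl\lVert i_h \bigr\rVert$, $\bigl\lVert \pi_h \bigr\rVert$. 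Combining the two steps shows $\gamma_h$ depends only on $c_P$ and the norms of $i_h$ and $\pi_h$; and since the family is uniformly $W$-bounded in $i_h$ and $V$-bounded in $\pi_h$, the constant may in fact be taken independent of $h$.

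I do not expect a serious obstacle here: the argument is essentially a reduction, and no rerun of the Arnold--Falk--Winther proof of \autoref{thm:inf-sup} is needed. The one point genuinely requiring care --- as opposed to routine bookkeeping --- is the verification in the first paragraph that $B_h$ and the test space $V_h^{k-1} \times V_h^k \times \mathfrak{H}_h^k$ reproduce the abstract setup with no concealed dependence on $i_h$; once that is confirmed, the result follows immediately from \autoref{thm:inf-sup} and \autoref{thm:discretePoincare}.
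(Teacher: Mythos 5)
Your proposal is correct and follows exactly the paper's argument: apply \autoref{thm:inf-sup} to the closed complex $\left( V_h , \mathrm{d}_h \right)$ and bound its Poincar\'e constant by $c_P \left\lVert \pi_h \right\rVert \left\lVert i_h \right\rVert$ via \autoref{thm:discretePoincare}. The additional check that $B_h$ involves no hidden dependence on $i_h$ is a sensible remark but not a different route.
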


\begin{proof}
  This is just \autoref{thm:inf-sup} applied to the Hilbert complex $
  \left( V _h , \mathrm{d} _h \right) $, combined with the fact that
  the Poincar\'e constant is $ c _P \left\lVert \pi _h \right\rVert
  \left\lVert i _h \right\rVert $ by \autoref{thm:discretePoincare}.
\end{proof}

\begin{remark}
  Since we have assumed that the morphisms $ i _h $ and $ \pi _h $ are
  uniformly bounded with respect to $h$, it follows that the inf-sup
  constants $ \gamma _h $ can be bounded below by some constant, which
  is independent of $h$.
\end{remark}

The goal, for the remainder of this section, will be to control the
error
\begin{equation*}
  \left\lVert \sigma - i _h \sigma _h \right\rVert _V + \left\lVert
    u - i _h u _h \right\rVert _V + \left\lVert p - i _h p _h
  \right\rVert ,
\end{equation*}
where $ \left( \sigma , u , p \right) $ is a solution to
\eqref{eqn:mixedProblem} and $ \left( \sigma _h , u _h , p _h \right)
$ is a solution to \eqref{eqn:discreteProblem}.  To do this, it will
be helpful to introduce the following modified mixed problem on $ i _h
^\ast V $: Find $ \left( \sigma _h ^\prime , u _h ^\prime , p _h
  ^\prime \right) \in V _h ^{ k - 1 } \times V _h ^k \times
\mathfrak{H} _h ^{ \prime k } $ satisfying
\begin{equation}
  \label{eqn:modifiedProblem}
  \begin{alignedat}{2}
    \left\langle J _h \sigma _h ^\prime , \tau _h \right\rangle _h -
    \left\langle J _h u _h ^\prime , \mathrm{d} _h \tau _h
    \right\rangle _h &= 0, &\quad \forall
    \tau _h &\in V _h ^{ k - 1 } ,\\
    \left\langle J _h \mathrm{d} _h \sigma _h ^\prime , v _h
    \right\rangle _h + \left\langle J _h \mathrm{d} _h u _h ^\prime ,
      \mathrm{d} _h v _h \right\rangle _h + \left\langle J _h p
      ^\prime _h , v _h \right\rangle _h &= \left\langle i _h ^\ast f
      , v _h \right\rangle _h , &\quad \forall v _h
    &\in V _h ^k , \\
    \left\langle J _h u _h ^\prime , q _h ^\prime \right\rangle _h &=
    0 , &\quad \forall q _h ^\prime &\in \mathfrak{H} _h ^{\prime k }
    .
  \end{alignedat}
\end{equation}
This has the corresponding bilinear form
\begin{multline*}
  B _h ^\prime \left( \sigma _h ^\prime , u _h ^\prime , p _h ^\prime ;
    \tau _h , v _h , q _h ^\prime \right) = \left\langle J _h \sigma _h
    ^\prime , \tau _h \right\rangle _h - \left\langle J _h
    u _h ^\prime , \mathrm{d}_h \tau _h \right\rangle _h \\
  + \left\langle J _h \mathrm{d}_h \sigma _h ^\prime , v _h
  \right\rangle _h + \left\langle J _h \mathrm{d}_h u _h ^\prime ,
    \mathrm{d}_h v _h \right\rangle _h + \left\langle J _h p _h ^\prime
    , v _h \right\rangle _h - \left\langle J _h u _h ^\prime ,
    q _h ^\prime \right\rangle _h ,
\end{multline*}
and the functional $ F _h ^\prime \left( \tau _h , v _h , q _h ^\prime
\right) = \left\langle i _h ^\ast f , v _h \right\rangle _h $.

This is precisely equivalent to the mixed problem on the subcomplex $
i _h V _h \subset V $, which has the bounded cochain projection $ i _h
\circ \pi _h \colon V \rightarrow i _h V _h $.  Therefore, the
stability and convergence analysis of \citet{ArFaWi2010} can be
applied immediately to this modified discrete problem.  In the end, we
will obtain the desired bound by applying the triangle inequality,
\begin{multline}
  \label{eqn:triangleInequality}
  \left\lVert \sigma - i _h \sigma _h \right\rVert _V + \left\lVert u
    - i _h u _h \right\rVert _V + \left\lVert p - i _h p _h
  \right\rVert\ \\
  \leq \left\lVert \sigma - i _h \sigma _h ^\prime \right\rVert _V +
  \left\lVert u - i _h u _h ^\prime \right\rVert _V +
  \left\lVert p - i _h p _h ^\prime \right\rVert \\
  + \left\lVert i _h \left( \sigma _h - \sigma_h ^\prime \right)
  \right\rVert _V + \left\lVert i _h \left( u _h - u_h ^\prime \right)
  \right\rVert _V + \left\lVert i _h \left( p _h - p_h ^\prime \right)
  \right\rVert .
\end{multline}
Observe that, since $ i _h $ is bounded, we can write
\begin{multline*}
  \left\lVert i _h \left( \sigma _h - \sigma_h ^\prime \right)
  \right\rVert _V + \left\lVert i _h \left( u _h - u_h ^\prime \right)
  \right\rVert _V + \left\lVert i _h \left( p _h - p_h ^\prime \right)
  \right\rVert \\
  \leq C \left( \left\lVert \sigma _h - \sigma_h ^\prime \right\rVert
    _{V _h} + \left\lVert u _h - u_h ^\prime \right\rVert _{V _h} +
    \left\lVert p _h - p_h ^\prime \right\rVert _h \right) ,
\end{multline*}
so it will suffice to control the error between solutions to
\eqref{eqn:discreteProblem} and \eqref{eqn:modifiedProblem} in $ V _h
$.

\begin{theorem}
  \label{thm:variationalPerturbation}
  Under the assumptions of \autoref{thm:discreteStability}, suppose
  that $ \left( \sigma _h , u _h , p _h \right) \in V _h ^{ k - 1 }
  \times V _h ^k \times \mathfrak{H} _h ^k $ is a solution to
  \eqref{eqn:discreteProblem} and $ \left( \sigma _h ^\prime , u _h
    ^\prime , p _h ^\prime \right) \in V _h ^{ k - 1 } \times V _h ^k
  \times \mathfrak{H} _h ^{ \prime k } $ is a solution to
  \eqref{eqn:modifiedProblem}.  Then
  \begin{equation*}
    \left\lVert \sigma _h - \sigma _h ^\prime \right\rVert _{ V _h } +
    \left\lVert u _h - u _h ^\prime \right\rVert _{ V _h } +
    \left\lVert p _h - p _h ^\prime \right\rVert _h \leq C \left(
      \left\lVert f _h - i _h ^\ast f \right\rVert _h + \left\lVert I
        - J _h \right\rVert \left\lVert f \right\rVert \right) .
  \end{equation*}
\end{theorem}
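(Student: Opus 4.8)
The plan is to run a \emph{Strang-type} argument, carried out in two stages so that the two distinct crimes --- the inexact load $f_h$ and the non-unitarity of $i_h$ --- are separated. Introduce an intermediate solution $(\sigma_h'', u_h'', p_h'') \in V_h^{k-1} \times V_h^k \times \mathfrak{H}_h^k$ of \eqref{eqn:discreteProblem} with the right-hand side $\langle f_h, v_h\rangle_h$ replaced by $\langle i_h^\ast f, v_h\rangle_h$. By the triangle inequality, the quantity to be estimated splits into a ``right-hand side'' contribution $\lVert \sigma_h - \sigma_h'' \rVert_{V_h} + \lVert u_h - u_h'' \rVert_{V_h} + \lVert p_h - p_h'' \rVert_h$ and a ``geometry'' contribution $\lVert \sigma_h'' - \sigma_h' \rVert_{V_h} + \lVert u_h'' - u_h' \rVert_{V_h} + \lVert p_h'' - p_h' \rVert_h$, which I would estimate in turn.

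For the first contribution, $(\sigma_h, u_h, p_h)$ and $(\sigma_h'', u_h'', p_h'')$ lie in the \emph{same} space and satisfy the \emph{same} bilinear form $B_h$, differing only in the load; subtracting, and invoking the inf-sup stability of $B_h$ from \autoref{thm:discreteStability}, the error is bounded by the supremum of $\langle f_h - i_h^\ast f, v_h\rangle_h$ over test triples of unit norm, hence by $C\lVert f_h - i_h^\ast f\rVert_h$ with $C$ uniform in $h$ by the remark following \autoref{thm:discreteStability}.

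For the second contribution the two solutions live in \emph{different} spaces, since the harmonic components lie in $\mathfrak{H}_h^k$ versus $\mathfrak{H}_h^{\prime k}$, and $B_h'$ differs from $B_h$ through the insertion of $J_h = i_h^{k\ast} i_h^k$ (\autoref{thm:jacobianDeterminant}). Since \eqref{eqn:modifiedProblem} is exactly the subcomplex mixed problem of \citet{ArFaWi2010} on $i_h V_h \subset V$ with cochain projection $i_h\pi_h$, the form $B_h'$ is inf-sup stable with a constant uniform in $h$ (via \autoref{thm:discretePoincare} and the uniform bounds on $i_h$, $\pi_h$). I would take the comparison triple $(\sigma_h'', u_h'', \tilde p) \in V_h^{k-1} \times V_h^k \times \mathfrak{H}_h^{\prime k}$, with $\tilde p$ the $i_h^\ast W$-orthogonal projection of $p_h''$ onto $\mathfrak{H}_h^{\prime k}$, and estimate the consistency residual $B_h'(\sigma_h'', u_h'', \tilde p; \tau_h, v_h, q_h') - \langle i_h^\ast f, v_h\rangle_h$. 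Writing $\langle J_h\cdot,\cdot\rangle_h = \langle\cdot,\cdot\rangle_h + \langle(J_h - I)\cdot,\cdot\rangle_h$ and cancelling the unweighted part against the three equations satisfied by $(\sigma_h'', u_h'', p_h'')$, the leftover consists of the $(J_h - I)$-terms, the term $\langle p_h'' - \tilde p, v_h\rangle_h$, and the term $-\langle u_h'', q_h'\rangle_h$, which no longer vanishes since $q_h' \notin \mathfrak{H}_h^k$. Each carries a factor $\lVert I - J_h\rVert$: the first directly; the second because $p_h'' \perp_h \mathfrak{B}_h^k$ forces $\langle J_h p_h'', b\rangle_h = \langle(J_h - I)p_h'', b\rangle_h$ for $b \in \mathfrak{B}_h^k$, while $J_h$ is uniformly positive definite ($\pi_h i_h = I$ gives $\langle J_h v, v\rangle_h \ge \lVert v\rVert_h^2/\lVert\pi_h\rVert^2$), so $\lVert p_h'' - \tilde p\rVert_h \le C\lVert I - J_h\rVert\lVert p_h''\rVert_h$; and the third because $\langle u_h'', q_h'\rangle_h$ reduces, via the Hodge decompositions of $u_h''$ and $q_h'$, to the pairing of a coboundary with the $\overline{\mathfrak{B}_h^k}$-component of $q_h'$, which has norm $\le C\lVert I - J_h\rVert\lVert q_h'\rVert_h$ by the same argument. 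Since $\lVert\sigma_h''\rVert_{V_h}$, $\lVert u_h''\rVert_{V_h}$, $\lVert p_h''\rVert_h$, $\lVert\mathrm{d}_h\sigma_h''\rVert_h \le C\lVert i_h^\ast f\rVert_h \le C\lVert i_h\rVert\lVert f\rVert$ by well-posedness, the residual is $\le C\lVert I - J_h\rVert\lVert f\rVert$ times the test norm; inf-sup stability of $B_h'$ then bounds $\lVert\sigma_h'' - \sigma_h'\rVert_{V_h} + \lVert u_h'' - u_h'\rVert_{V_h} + \lVert\tilde p - p_h'\rVert_h$, and adding $\lVert p_h'' - \tilde p\rVert_h$ back gives the geometry contribution. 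Summing the two contributions, and absorbing the cross term $\lVert I - J_h\rVert\lVert f_h - i_h^\ast f\rVert_h$ (legitimate since $\lVert I - J_h\rVert \le 1 + \lVert i_h\rVert^2$ is uniformly bounded), yields the claimed estimate.

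The main obstacle is precisely the mismatch $\mathfrak{H}_h^k \ne \mathfrak{H}_h^{\prime k}$ in the second stage, which forbids simply subtracting \eqref{eqn:modifiedProblem} from \eqref{eqn:discreteProblem}; the delicate parts are isolating and bounding the extra consistency term $\langle u_h'', q_h'\rangle_h$ and checking that each leftover genuinely scales like $\lVert I - J_h\rVert$ rather than $O(1)$. Everything else is a routine application of the inf-sup machinery of \citet{ArFaWi2010} to two perturbation problems.
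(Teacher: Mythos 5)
Your proposal is correct, but it takes a genuinely different route from the paper. The paper runs a single-stage argument entirely against the discrete form $B_h$: for an arbitrary comparison triple $(\tau,v,q)$ it expands $B_h$ evaluated at the modified solution as $B_h'$ plus $(I-J_h)$-correction terms, invokes both variational principles to identify the residual $\langle f_h-i_h^\ast f,v_h\rangle_h+\langle u_h',q_h\rangle_h$ plus $(I-J_h)$-terms, applies the inf-sup bound of \autoref{thm:discreteStability}, then chooses $\tau=\sigma_h'$, $v=u_h'$, $q=P_{\mathfrak{H}_h}p_h'$ and finishes by showing $\lVert P_{\mathfrak{H}_h}u_h'\rVert_h$ and $\lVert P_{\mathfrak{B}_h}p_h'\rVert_h$ are of order $\lVert I-J_h\rVert\,\lVert f\rVert$ via the modified Hodge decomposition. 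You instead insert an auxiliary solution $(\sigma_h'',u_h'',p_h'')$ of \eqref{eqn:discreteProblem} with load $i_h^\ast f$, dispose of the data crime by a routine inf-sup perturbation in $B_h$, and then compare the auxiliary and modified solutions through a consistency residual tested against $B_h'$, projecting $p_h''$ in the opposite direction ($J_h$-orthogonally onto $\mathfrak{H}_h^{\prime k}$) and controlling the extra term $\langle u_h'',q_h'\rangle_h$. The two arguments are essentially dual—you test in $\mathfrak{H}_h^{\prime k}$ against $B_h'$, the paper tests in $\mathfrak{H}_h^k$ against $B_h$—and both hinge on the same structural facts: $i_h^\ast W$-orthogonality of $\mathfrak{H}_h^{\prime k}$ to $\mathfrak{B}_h^k$, uniform discrete Poincar\'e constants, and stability bounds of the solutions by $\lVert f\rVert$. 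What your split buys is a clean separation of the two crimes, at the price of a second inf-sup argument for $B_h'$ (legitimate, as you note, since \eqref{eqn:modifiedProblem} is the subcomplex problem of \citet{ArFaWi2010} on $i_hV_h$, together with the uniform norm equivalence furnished by $\lVert i_h\rVert$ and $\lVert\pi_h\rVert_V$); the paper avoids the auxiliary problem but must carry the harmonic-space mismatch through one combined estimate. One small caution: your justification of uniform coercivity, $\langle J_hv,v\rangle_h\ge\lVert v\rVert_h^2/\lVert\pi_h\rVert^2$, implicitly requires a $W$-bound on $\pi_h$, whereas the hypotheses only give $V$-boundedness; but you apply it only to coboundaries such as $b=p_h''-\tilde p\in\mathfrak{B}_h^k$, where $\mathrm{d}_hb=0$ makes the $V_h$- and $W_h$-norms coincide, so $\lVert\pi_h\rVert_V$ suffices—or you can bypass coercivity entirely, as in your treatment of the third term, via $\lVert b\rVert_h^2=-\langle\tilde p,b\rangle_h=\langle(J_h-I)\tilde p,b\rangle_h$ and a harmless absorption.
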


\begin{proof}
  For any $ \left( \tau , v, q \right) \in V _h ^{ k - 1 } \times V _h
  ^k \times \mathfrak{H} _h ^k $, we can write
  \begin{multline*}
    B _h \left( \sigma _h - \tau , u _h - v , p _h - q ; \tau _h , v
      _h , q _h \right) = B _h \left( \sigma _h - \sigma _h ^\prime ,
      u _h - u _h ^\prime , p _h - p _h ^\prime ; \tau _h , v _h , q
      _h \right) \\
    + B _h \left( \sigma _h ^\prime - \tau , u _h ^\prime - v , p _h
      ^\prime - q ; \tau _h , v _h , q _h \right) .
  \end{multline*}
  Ignoring the first term momentarily, observe for the second term
  that
  \begin{multline*} 
    B _h \left( \sigma _h ^\prime , u _h ^\prime , p _h ^\prime ; \tau
      _h , v _h , q _h \right) = B _h ^\prime \left( \sigma _h ^\prime
      , u _h ^\prime , p _h ^\prime ; \tau _h , v _h , q _h \right) \\
    + \left\langle \left( I - J _h \right) \sigma _h ^\prime , \tau _h
    \right\rangle _h - \left\langle \left( I - J _h \right) u _h
      ^\prime , \mathrm{d} _h \tau _h \right\rangle _h + \left\langle
      \left( I - J _h \right) \mathrm{d} _h \sigma _h ^\prime , v _h
    \right\rangle _h \\+ \left\langle \left( I - J _h \right)
      \mathrm{d} _h u _h ^\prime , \mathrm{d} _h v _h \right\rangle _h
    + \left\langle \left( I - J _h \right) p ^\prime _h , v _h
    \right\rangle _h - \left\langle \left( I - J _h \right) u _h
      ^\prime , q _h \right\rangle _h ,
  \end{multline*} 
  so by the variational principles \eqref{eqn:discreteProblem} and
  \eqref{eqn:modifiedProblem},
  \begin{align*}
    B _h ^\prime \left( \sigma _h ^\prime , u _h ^\prime , p _h
      ^\prime ; \tau _h , v _h , q _h \right) &= \left\langle i _h
      ^\ast f , v _h \right\rangle _h - \left\langle J _h u _h ^\prime
      , q _h \right\rangle _h ,\\
    B _h \left( \sigma _h , u _h , p _h ; \tau _h , v _h , q _h
    \right) &= \left\langle f _h , v _h \right\rangle _h .
  \end{align*}
  Therefore,
  \begin{multline*}
    B _h \left( \sigma _h - \sigma _h ^\prime , u _h - u _h ^\prime ,
      p _h - p _h ^\prime ; \tau _h , v _h , q _h \right) =
    \left\langle f _h - i _h ^\ast f , v _h \right\rangle _h +
    \left\langle u _h ^\prime , q _h \right\rangle _h \\
    - \left\langle \left( I - J _h \right) \sigma _h ^\prime , \tau _h
    \right\rangle _h + \left\langle \left( I - J _h \right) u _h
      ^\prime , \mathrm{d} _h \tau _h \right\rangle _h - \left\langle
      \left( I - J _h \right) \mathrm{d} _h \sigma _h ^\prime , v _h
    \right\rangle _h \\- \left\langle \left( I - J _h \right)
      \mathrm{d} _h u _h ^\prime , \mathrm{d} _h v _h \right\rangle _h
    - \left\langle \left( I - J _h \right) p ^\prime _h , v _h
    \right\rangle _h ,
  \end{multline*}
  so using the boundedness of the bilinear form and Cauchy--Schwarz,
  we get the upper bound
  \begin{multline*}
    B _h \left( \sigma _h - \tau , u _h - v , p _h - q ; \tau _h , v
      _h , q _h \right) \\
    \leq C \Bigl( \left\lVert f _h - i _h ^\ast f \right\rVert _h +
    \left\lVert P _{ \mathfrak{H} _h } u _h ^\prime \right\rVert _h +
    \left\lVert I - J _h \right\rVert \left( \left\lVert \sigma _h
        ^\prime \right\rVert _{ V _h } + \left\lVert u _h ^\prime
      \right\rVert _{ V _h } + \left\lVert p _h ^\prime \right\rVert
      _h \right) \\
    + \left\lVert \sigma _h ^\prime - \tau \right\rVert _{ V _h } +
    \left\lVert u _h ^\prime - v \right\rVert _{ V _h } + \left\lVert
      p _h ^\prime - q \right\rVert _h \Bigr) \Bigl( \left\lVert \tau
      _h \right\rVert _{ V _h } + \left\lVert v _h \right\rVert _{ V
      _h } + \left\lVert q _h \right\rVert _h \Bigr) .
  \end{multline*}
  Next, \autoref{thm:discreteStability} gives the lower bound
  \begin{multline*}
    B _h \left( \sigma _h - \tau , u _h - v , p _h - q ; \tau _h , v
      _h , q _h \right) \\
    \geq \gamma _h \left( \left\lVert \sigma _h - \tau \right\rVert _{
        V _h } + \left\lVert u _h - v \right\rVert _{ V _h } +
      \left\lVert p _h - q \right\rVert _h \right) \left( \left\lVert
        \tau _h \right\rVert _{ V _h } + \left\lVert v _h \right\rVert
      _{ V _h } + \left\lVert q _h \right\rVert _h \right) 
  \end{multline*}
  for some $ \left( \tau _h , v _h , q _h \right) \in V _h ^{ k - 1 }
  \times V _h ^k \times \mathfrak{H} _h ^k $, where $ \gamma _h $ can
  be bounded below independently of $h$.  Therefore, combining the
  upper and lower bounds and dividing out $ \left\lVert \tau _h
  \right\rVert _{ V _h } + \left\lVert v _h \right\rVert _{ V _h } +
  \left\lVert q _h \right\rVert _h $, we get
  \begin{multline*}
    \left\lVert \sigma _h - \tau \right\rVert _{ V _h } + \left\lVert
      u _h - v \right\rVert _{ V _h } + \left\lVert p _h - q
    \right\rVert _h \\
    \leq C \Bigl( \left\lVert f _h - i _h ^\ast f \right\rVert _h +
    \left\lVert P _{ \mathfrak{H} _h } u _h ^\prime \right\rVert _h +
    \left\lVert I - J _h \right\rVert \left( \left\lVert \sigma _h
        ^\prime \right\rVert _{ V _h } + \left\lVert u _h ^\prime
      \right\rVert _{ V _h } + \left\lVert p _h ^\prime \right\rVert
      _h \right) \\
    + \left\lVert \sigma _h ^\prime - \tau \right\rVert _{ V _h } +
    \left\lVert u _h ^\prime - v \right\rVert _{ V _h } + \left\lVert
      p _h ^\prime - q \right\rVert _h \Bigr) .
  \end{multline*}
  This expression can be simplified considerably by choosing $ \tau =
  \sigma _h ^\prime $, $ v = u _h ^\prime $, and $ q = P _{
    \mathfrak{H} _h } p _h ^\prime $, so applying the triangle
  inequality gives the error estimate
  \begin{multline*}
    \left\lVert \sigma _h - \sigma _h ^\prime \right\rVert _{ V _h } +
    \left\lVert u _h - u _h ^\prime \right\rVert _{ V _h } +
    \left\lVert p _h - p _h ^\prime \right\rVert _h  \\
    \leq C \left( \left\lVert f _h - i _h ^\ast f \right\rVert _h +
      \left\lVert P _{ \mathfrak{H} _h } u _h ^\prime \right\rVert _h
      + \left\lVert I - J _h \right\rVert \left\lVert f \right\rVert +
      \left\lVert p _h ^\prime - q \right\rVert _h \right) .
  \end{multline*}
  All that remains is to deal with the terms $ \left\lVert P _{
      \mathfrak{H} _h } u _h ^\prime \right\rVert _h $ and $
  \left\lVert p _h ^\prime - q \right\rVert _h $.  First, since $ u _h
  ^\prime $ is $ i _h ^\ast V $-orthogonal to $ \mathfrak{H} _h ^{
    \prime k } $, the modified Hodge decomposition lets us write $ u
  ^\prime _h = u ^\prime _{ \mathfrak{B} } + u ^\prime _\perp $, where
  $ u ^\prime _{ \mathfrak{B} } \in \mathfrak{B} _h ^k $ and $ u
  ^\prime _\perp \in \mathfrak{Z} _h ^{ k \perp \prime } $.  Now,
  observe that $ P _{ \mathfrak{H} _h } u _{ \mathfrak{B} } ^\prime =
  0 $ since $ \mathfrak{B} _h ^k \perp \mathfrak{H} _h ^k $, and
  furthermore $ P _{ \mathfrak{H} _h } J _h u _\perp ^\prime = 0 $
  since $ u _\perp ^\prime \in \mathfrak{Z} _h ^{ k \perp \prime } $
  implies $ J _h u _\perp ^\prime \perp \mathfrak{Z} _h ^k $.
  Therefore,
  \begin{equation*}
    \left\lVert P _{ \mathfrak{H}  _h } u _h ^\prime \right\rVert _h =
    \left\lVert P _{ \mathfrak{H}  _h } u _\perp ^\prime \right\rVert
    _h = \left\lVert P _{ \mathfrak{H}  _h } \left( I - J _h \right) u
      _\perp ^\prime \right\rVert _h \leq C \left\lVert I - J _h
    \right\rVert \left\lVert f \right\rVert .
  \end{equation*}
  Next, since $ p _h ^\prime \in \mathfrak{H} _h ^{ \prime k } \subset
  \mathfrak{Z} _h ^k $, the Hodge decomposition gives $ p _h ^\prime =
  P _{ \mathfrak{B} _h } p _h ^\prime + P _{ \mathfrak{H} _h } p _h
  ^\prime = P _{ \mathfrak{B} _h } p _h ^\prime + q $.  Also, similar
  to the previous term, since $ p _h ^\prime \in \mathfrak{H} _h ^{
    \prime k } $ we have $ J _h p _h ^\prime \perp \mathfrak{B} _h ^k
  $, so $ P _{ \mathfrak{B} _h } J _h p _h ^\prime = 0 $.  Thus,
  \begin{equation*}
    \left\lVert p _h ^\prime - q \right\rVert _h = \left\lVert P _{
        \mathfrak{B}  _h } p _h ^\prime \right\rVert _h = \left\lVert
      P _{ \mathfrak{B}  _h } \left( I - J _h \right) p _h ^\prime
    \right\rVert _h \leq C \left\lVert I - J _h \right\rVert
    \left\lVert f \right\rVert .
  \end{equation*}
  Therefore, these two terms can be combined with the existing $
  \left\lVert I - J _h \right\rVert \left\lVert f \right\rVert $ term,
  leaving the final error estimate,
  \begin{equation*}
    \left\lVert \sigma _h - \sigma _h ^\prime \right\rVert _{ V _h } +
    \left\lVert u _h - u _h ^\prime \right\rVert _{ V _h } +
    \left\lVert p _h - p _h ^\prime \right\rVert _h 
    \leq C \left( \left\lVert f _h - i _h ^\ast f \right\rVert _h 
      + \left\lVert I - J _h \right\rVert \left\lVert f \right\rVert
    \right) ,
  \end{equation*}
  as desired, which completes the proof.
\end{proof}

\begin{corollary}
  If $ \left( \sigma , u , p \right) \in V ^{ k - 1 } \times V ^k
  \times \mathfrak{H} ^k $ is a solution to \eqref{eqn:mixedProblem}
  and $ \left( \sigma _h , u _h , p _h \right) \in V _h ^{ k - 1 }
  \times V _h ^k \times \mathfrak{H} _h ^k $ is a solution to
  \eqref{eqn:discreteProblem}, then
    \begin{multline*}
      \left\lVert \sigma - i _h \sigma _h \right\rVert _V +
      \left\lVert u - i _h u _h \right\rVert _V + \left\lVert p - i _h
        p _h \right\rVert \\
      \leq C \bigl( \inf _{ \tau \in i _h V _h ^{ k - 1 } }
      \left\lVert \sigma - \tau \right\rVert _V + \inf _{ v \in i _h V
        _h ^k } \left\lVert u - v \right\rVert _V + \inf _{ q \in i _h
        V _h ^k } \left\lVert p - q \right\rVert _V + \mu \inf _{ v
        \in i _h V _h
        ^k } \left\lVert P _{ \mathfrak{B} } u - v \right\rVert _V \\
      + \left\lVert f _h - i _h ^\ast f \right\rVert _h + \left\lVert
        I - J _h \right\rVert \left\lVert f \right\rVert \bigr) ,
  \end{multline*}
  where $ \mu = \mu _h ^k = \displaystyle\sup _{ \substack{r \in
      \mathfrak{H} ^k \\ \left\lVert r \right\rVert = 1 } }
  \left\lVert \left( I - i _h ^k \pi _h ^k \right) r \right\rVert $.
\end{corollary}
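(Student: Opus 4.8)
The plan is to obtain the estimate from the triangle inequality \eqref{eqn:triangleInequality}, bounding its two groups of terms by two results already in hand. The key observation, recorded above, is that the modified problem \eqref{eqn:modifiedProblem} on $i_h^\ast V$ is \emph{literally} the Arnold--Falk--Winther subcomplex problem \eqref{eqn:subcomplexProblem} for the subcomplex $i_h V_h \subset V$. Indeed, under the identification of $V_h^k$ equipped with $\langle J_h \cdot , \cdot \rangle_h$ with the subspace $i_h V_h^k \subset V^k$ (which is isometric, in both the $W$- and $V$-norms, by \autoref{thm:jacobianDeterminant} and the morphism property $\mathrm{d}\, i_h = i_h \mathrm{d}_h$), the $J_h$-weighted equations of \eqref{eqn:modifiedProblem} become the unweighted equations of \eqref{eqn:subcomplexProblem}, the modified harmonic space $\mathfrak{H}_h^{\prime k}$ becomes the harmonic space of the subcomplex $i_h V_h$, and $i_h \circ \pi_h \colon V \to i_h V_h$ is a uniformly $V$-bounded cochain projection (it is idempotent since $\pi_h i_h = \mathrm{id}$, and commutes with $\mathrm{d}$ since both $i_h$ and $\pi_h$ do). Hence $(i_h \sigma_h^\prime, i_h u_h^\prime, i_h p_h^\prime)$ is the solution of \eqref{eqn:subcomplexProblem} on $i_h V_h$.

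First, I would apply \autoref{thm:subcomplex} to the subcomplex $i_h V_h \subset V$ with cochain projection $i_h \circ \pi_h$, which yields
\begin{multline*}
  \left\lVert \sigma - i_h \sigma_h^\prime \right\rVert_V + \left\lVert u - i_h u_h^\prime \right\rVert_V + \left\lVert p - i_h p_h^\prime \right\rVert \\
  \leq C \bigl( \inf_{\tau \in i_h V_h^{k-1}} \left\lVert \sigma - \tau \right\rVert_V + \inf_{v \in i_h V_h^k} \left\lVert u - v \right\rVert_V + \inf_{q \in i_h V_h^k} \left\lVert p - q \right\rVert_V + \mu \inf_{v \in i_h V_h^k} \left\lVert P_{\mathfrak{B}} u - v \right\rVert_V \bigr) ,
\end{multline*}
where the constant appearing there is precisely $\mu = \sup_{ r \in \mathfrak{H}^k,\ \lVert r \rVert = 1 } \lVert ( I - i_h^k \pi_h^k ) r \rVert$, since the cochain projection in use is $i_h \circ \pi_h$. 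This controls the first group of terms on the right-hand side of \eqref{eqn:triangleInequality}.

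Second, for the remaining group $\lVert i_h( \sigma_h - \sigma_h^\prime ) \rVert_V + \lVert i_h( u_h - u_h^\prime ) \rVert_V + \lVert i_h( p_h - p_h^\prime ) \rVert$, I would use the (uniform) $V$-boundedness of $i_h$ to pass to the $V_h$-norms, as already noted, and then invoke \autoref{thm:variationalPerturbation} to bound $\lVert \sigma_h - \sigma_h^\prime \rVert_{V_h} + \lVert u_h - u_h^\prime \rVert_{V_h} + \lVert p_h - p_h^\prime \rVert_h$ by $C( \lVert f_h - i_h^\ast f \rVert_h + \lVert I - J_h \rVert \lVert f \rVert )$. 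Substituting both bounds into \eqref{eqn:triangleInequality} and absorbing constants gives the claimed estimate. The argument is thus mostly bookkeeping; the one step needing genuine care is the identification of \eqref{eqn:modifiedProblem} with the subcomplex problem on $i_h V_h$ — in particular, checking that the harmonic constraint and the $J_h$-weights transform correctly under the isometry — but once that is in place, \autoref{thm:subcomplex} and \autoref{thm:variationalPerturbation} supply everything, so I do not anticipate a real obstacle.
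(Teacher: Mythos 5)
Your proposal is correct and follows essentially the same route as the paper: the paper's proof is exactly the triangle inequality \eqref{eqn:triangleInequality}, with the first group of terms bounded by \autoref{thm:subcomplex} applied to the subcomplex $ i _h V _h \subset V $ with cochain projection $ i _h \circ \pi _h $ (using the identification of \eqref{eqn:modifiedProblem} with the subcomplex problem, which the paper records just before \autoref{thm:variationalPerturbation}), and the second group bounded via the $V$-boundedness of $ i _h $ together with \autoref{thm:variationalPerturbation}. Your additional care about the isometric identification of $ \left( V _h , \left\langle J _h \cdot , \cdot \right\rangle _h \right) $ with $ i _h V _h $ and the matching of $ \mathfrak{H} _h ^{\prime k} $ with the harmonic space of the subcomplex is exactly the content the paper relies on there.
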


\begin{proof}
  Use the triangle inequality, as in \eqref{eqn:triangleInequality},
  and then apply \autoref{thm:subcomplex} and
  \autoref{thm:variationalPerturbation} to bound the respective error
  terms.
\end{proof}

This theorem establishes convergence, as long as our approximations
satisfy $ \left\lVert I - J _h \right\rVert \rightarrow 0 $ and $
\left\lVert f _h - i _h ^\ast f \right\rVert _h \rightarrow 0 $ when $
h \rightarrow 0 $.  This raises the question of how to choose $ f _h
\in V _h ^k $; although clearly $ f _h = i _h ^\ast f $ will work, in
many cases this cannot be computed efficiently.  The next result
demonstrates that, if $ \Pi _h \colon W ^k \rightarrow W _h ^k $ is
any bounded linear projection (i.e., satisfying $ \Pi _h \circ i _h ^k
= \mathrm{id} _{ W _h ^k } $), then simply choosing $ f _h = \Pi _h f
$ is sufficient to get a quasi-optimally convergent solution.

\begin{theorem}
  If $ \Pi _h \colon W ^k \rightarrow W _h ^k $ is a family of linear
  projections, bounded uniformly with respect to $h$, then we have the
  inequality
  \begin{equation*}
    \left\lVert \Pi _h f - i _h ^\ast f \right\rVert _h \leq C \bigl(
    \left\lVert I - J _h \right\rVert \left\lVert f \right\rVert +
    \displaystyle\inf _{ \phi  \in i _h W _h ^k } \left\lVert f - \phi 
    \right\rVert \bigr) .
  \end{equation*} 
\end{theorem}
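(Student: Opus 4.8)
The plan is to exploit the defining property of $i_h^\ast$ as a Hilbert-space adjoint together with the projection identity $\Pi_h \circ i_h^k = \mathrm{id}_{W_h^k}$, reducing the whole estimate to controlling how far $J_h = i_h^\ast i_h$ is from the identity on a suitably normalized element of $W_h^k$. First I would fix an arbitrary $\phi \in i_h W_h^k$ and write $\phi = i_h w_h$ for the (unique, by injectivity) $w_h \in W_h^k$. Two identities are then immediate: $\Pi_h \phi = \Pi_h i_h w_h = w_h$, and $i_h^\ast \phi = i_h^\ast i_h w_h = J_h w_h$.

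Next I would insert $\phi$ and split the quantity to be bounded as
\[
  \Pi_h f - i_h^\ast f = \Pi_h (f - \phi) \;+\; (w_h - J_h w_h) \;-\; i_h^\ast (f - \phi).
\]
Taking $\lVert\cdot\rVert_h$-norms and applying the triangle inequality, the first term is at most $\lVert \Pi_h \rVert \, \lVert f - \phi \rVert$ and the third is at most $\lVert i_h^\ast \rVert \, \lVert f - \phi \rVert = \lVert i_h \rVert \, \lVert f - \phi \rVert$; by the uniform boundedness of the $\Pi_h$ and of the $i_h$, each of these is $\le C \lVert f - \phi \rVert$. The middle term is at most $\lVert I - J_h \rVert \, \lVert w_h \rVert_h$, so everything comes down to bounding $\lVert w_h \rVert_h$ by a constant multiple of $\lVert f \rVert$.

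For that last point I would commit to a specific choice of $\phi$: a near-best approximation of $f$ from $i_h W_h^k$, say with $\lVert f - \phi \rVert \le 2 \inf_{\psi \in i_h W_h^k} \lVert f - \psi \rVert$. Since $0 \in i_h W_h^k$, this infimum is at most $\lVert f \rVert$, so $\lVert \phi \rVert \le \lVert f \rVert + \lVert f - \phi \rVert \le 3 \lVert f \rVert$, whence $\lVert w_h \rVert_h = \lVert \Pi_h \phi \rVert_h \le \lVert \Pi_h \rVert \, \lVert \phi \rVert \le 3 C \lVert f \rVert$. Assembling the three estimates and absorbing the near-best factor of $2$ into the constant yields $\lVert \Pi_h f - i_h^\ast f \rVert_h \le C\bigl( \lVert I - J_h \rVert \, \lVert f \rVert + \inf_{\phi \in i_h W_h^k} \lVert f - \phi \rVert \bigr)$, as claimed, with $C$ depending only on the uniform bounds for $\lVert \Pi_h \rVert$ and $\lVert i_h \rVert$.

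The only real obstacle is the bookkeeping that keeps $\lVert w_h \rVert_h$ under control: $w_h$ is tied to the chosen approximant $\phi$, and a careless $\phi$ could make it large relative to $\lVert f \rVert$; committing to a near-best approximant and using $0 \in i_h W_h^k$ to bound the approximation error by $\lVert f \rVert$ is exactly what renders this harmless. Everything else is a one-line consequence of the adjoint relation and uniform boundedness.
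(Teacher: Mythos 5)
Your argument is correct and uses essentially the same ingredients as the paper's proof: the triangle inequality, the identities $\Pi_h i_h = \mathrm{id}_{W_h^k}$ and $J_h = i_h^\ast i_h$, uniform boundedness of $\Pi_h$ and $i_h$, and insertion of a (near-)best approximant $\phi \in i_h W_h^k$. In fact, taking $\phi = i_h \Pi_h f$ collapses your three-term decomposition to the paper's two-term split $\Pi_h f - i_h^\ast f = \left( I - J_h \right) \Pi_h f - i_h^\ast \left( I - i_h \Pi_h \right) f$, which avoids your extra bookkeeping for $\lVert w_h \rVert_h$ (since $\lVert \Pi_h f \rVert_h \leq \lVert \Pi_h \rVert \lVert f \rVert$ directly) and reaches the infimum via the quasi-optimality identity $\left( I - i_h \Pi_h \right) f = \left( I - i_h \Pi_h \right) \left( f - \phi \right)$ instead of a near-best choice of $\phi$.
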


\begin{proof}
  Using the triangle inequality, we write
  \begin{align*}
    \left\lVert \left( \Pi _h - i _h ^\ast \right) f \right\rVert _h
    &\leq \left\lVert \left( \Pi _h - i _h ^\ast i _h \Pi _h \right) f
    \right\rVert _h + \left\lVert \left( i _h ^\ast - i _h ^\ast i _h
        \Pi _h \right) f \right\rVert _h \\
    &= \left\lVert \left( I - i _h ^\ast i _h \right) \Pi _h f
    \right\rVert _h + \left\lVert i _h ^\ast \left( I - i _h \Pi _h
      \right) f \right\rVert _h \\
    &\leq \left\lVert I - J _h \right\rVert \left\lVert \Pi _h f
    \right\rVert _h + \left\lVert i _h ^\ast \right\rVert \left\lVert
      \left( I - i _h \Pi _h \right) f \right\rVert \\
    &\leq C \bigl( \left\lVert I - J _h \right\rVert \left\lVert f
    \right\rVert + \inf _{ \phi \in i _h W _h ^k } \left\lVert f -
      \phi \right\rVert \bigr) ,
  \end{align*}
  where the final step follows from the $W$-boundedness of $ \Pi _h $ and
  the quasi-optimality property of $ I - i _h \Pi _h $, i.e., $ \left(
    I - i _h \Pi _h \right) f = \left( I - i _h \Pi _h \right) \left(
    f - \phi \right) $ for any $ \phi \in i _h W _h ^k $.
\end{proof}

\subsection{Remarks on obtaining improved error estimates}

\citet{ArFaWi2010} were also able to obtain improved error estimates
by making some additional assumptions: namely, that $ \pi _h $ is
$W$-bounded rather than merely $V$-bounded, and that the Hilbert
complex $V$ satisfies a certain compactness property.  With these
assumptions, the continuous solution operator $K \colon W ^k
\rightarrow W ^k $ becomes a compact operator, and hence converts the
pointwise convergence of $ I - \pi _h \rightarrow 0 $ (which follows
from the quasi-optimality property) to norm convergence.  This norm
convergence is essential for applying the so-called ``Aubin--Nitsche
trick'' (also known as ``$ L ^2 $ lifting''), where one obtains
improved estimates by applying the solution operator to the error term
itself.  Roughly speaking, one needs norm convergence, rather than
pointwise convergence, since the solution operator is being applied to
quantities that depend on the parameter $h$.

However, there are no such improved estimates for the additional error
terms obtained in the previous subsection.  Essentially, this is
because norm convergence is already required for $ \left\lVert I - J
  _h \right\rVert \rightarrow 0 $ as $ h \rightarrow 0 $, and there is
no analogous quasi-optimality result for $ J _h $ as there is for $
\pi _h $.  Therefore, these terms remain the same, and the improved
estimates only apply to the terms already analyzed by
\citet{ArFaWi2010} for the subcomplex case.

\subsection{Convergence of the eigenvalue problem}

While we have primarily focused on the numerical approximation of the
mixed variational problem, \citet{ArFaWi2010} also analyzed an
eigenvalue problem associated to the Hodge Laplacian.  The extension
of their eigenvalue convergence result to non-subcomplexes is fairly
straightforward, and follows from the results already given in this
section, as we will now show.

Consider the eigenvalue problem
\begin{equation}
  \label{eqn:evProblem}
  \begin{alignedat}{2}
    \left\langle \sigma , \tau \right\rangle - \left\langle u ,
      \mathrm{d} \tau \right\rangle &= 0, &\quad \forall \tau &\in V
    ^{ k - 1 } ,\\
    \left\langle \mathrm{d} \sigma, v \right\rangle + \left\langle
      \mathrm{d} u, \mathrm{d} v \right\rangle + \left\langle p, v
    \right\rangle &= \lambda \left\langle u, v \right\rangle , &\quad
    \forall
    v &\in V ^k ,\\
    \left\langle u, q \right\rangle &= 0 , &\quad \forall q &\in
    \mathfrak{H} ^k ,
  \end{alignedat}
\end{equation}
the discrete problem
\begin{equation}
  \label{eqn:evDiscreteProblem}
  \begin{alignedat}{2}
    \left\langle \sigma _h , \tau _h \right\rangle _h - \left\langle u
      _h , \mathrm{d} _h \tau _h \right\rangle _h &= 0, &\quad \forall
    \tau _h &\in V _h ^{ k - 1 } ,\\
    \left\langle \mathrm{d} _h \sigma _h , v _h \right\rangle _h +
    \left\langle \mathrm{d} _h u _h , \mathrm{d} _h v _h \right\rangle
    _h + \left\langle p _h , v _h \right\rangle _h &= \lambda _h
    \left\langle u _h , v _h \right\rangle _h , &\quad \forall v _h
    &\in V _h ^k , \\
    \left\langle u _h , q _h \right\rangle _h &= 0 , &\quad \forall q
    _h &\in \mathfrak{H} _h ^k ,
  \end{alignedat}
\end{equation}
and the modified discrete problem
\begin{equation}
  \label{eqn:evModifiedProblem}
  \begin{alignedat}{2}
    \left\langle J _h \sigma _h ^\prime , \tau _h \right\rangle _h -
    \left\langle J _h u _h ^\prime , \mathrm{d} _h \tau _h
    \right\rangle _h &= 0, &\ \forall
    \tau _h &\in V _h ^{ k - 1 } ,\\
    \left\langle J _h \mathrm{d} _h \sigma _h ^\prime , v _h
    \right\rangle _h + \left\langle J _h \mathrm{d} _h u _h ^\prime ,
      \mathrm{d} _h v _h \right\rangle _h + \left\langle J _h p
      ^\prime _h , v _h \right\rangle _h &= \lambda _h ^\prime
    \left\langle u _h ^\prime , v _h \right\rangle _h , &\ \forall
    v _h &\in V _h ^k , \\
    \left\langle J _h u _h ^\prime , q _h ^\prime \right\rangle _h &=
    0 , &\ \forall q _h ^\prime &\in \mathfrak{H} _h ^{\prime k } .
  \end{alignedat}
\end{equation}
As shown by \citet[Theorem 3.19]{ArFaWi2010}, solutions to the
subcomplex problem \eqref{eqn:evModifiedProblem} converge to those of
\eqref{eqn:evProblem}, which follows immediately from the fact that $
i _h K _h ^\prime P _h $ converges to $K$ in the $ \mathcal{L} \left(
  W ^k , W ^k \right) $ operator norm.  We now show that this result
also holds for the problem \eqref{eqn:evDiscreteProblem}.

\begin{theorem}
  Let $ \left( V , \mathrm{d} \right) $ be the domain complex of a
  closed Hilbert complex $ \left( W , \mathrm{d} \right) $ satisfying
  the compactness property, and let $ \left( V _h , \mathrm{d} _h
  \right) $ be a family of domain complexes of closed Hilbert
  complexes $ \left( W _h , \mathrm{d} _h \right) $, equipped with
  morphisms $ i _h \colon W _h \hookrightarrow W $ and $ \pi _h \colon
  W \rightarrow W _h $ such that $ \pi _h ^k \circ i _h ^k =
  \mathrm{id} _{ W _h ^k } $, where $ i _h $ and $ \pi _h $ are
  bounded uniformly with respect to $h$.  Then the discrete eigenvalue
  problems \eqref{eqn:evDiscreteProblem} converge to the problem
  \eqref{eqn:evProblem}.
\end{theorem}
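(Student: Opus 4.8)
The plan is to reduce the convergence of \eqref{eqn:evDiscreteProblem} to the already-known convergence of the modified problem \eqref{eqn:evModifiedProblem}, using the perturbation estimate of \autoref{thm:variationalPerturbation} in exactly the way the extra Strang-type terms entered the analysis of the mixed problem. Recall that \eqref{eqn:evModifiedProblem} is precisely the Hodge Laplacian eigenvalue problem on the genuine subcomplex $ i _h V _h \subset V $, which carries the bounded cochain projection $ i _h \circ \pi _h $; hence \citet[Theorem 3.19]{ArFaWi2010} applies verbatim, and since $ \left( W , \mathrm{d} \right) $ satisfies the compactness property so that the solution operator $ K $ is compact and self-adjoint, it yields that $ T _h ^\prime := i _h K _h ^\prime P _h $ converges to $ K $ in the $ \mathcal{L} \left( W ^k , W ^k \right) $ operator norm. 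Here $ P _h := J _h ^{-1} i _h ^\ast \colon W ^k \to W _h ^k $, so that $ i _h P _h $ is the $ W $-orthogonal projection onto $ i _h W _h ^k $ and $ P _h \circ i _h = \mathrm{id} _{ W _h ^k } $; note also that $ \bigl\lVert J _h ^{-1} \bigr\rVert \leq \bigl\lVert \pi _h ^k \bigr\rVert ^2 $ (from $ \langle J _h v _h , v _h \rangle _h = \lVert i _h v _h \rVert ^2 \geq \lVert v _h \rVert _h ^2 / \lVert \pi _h ^k \rVert ^2 $), so $ P _h $ is bounded uniformly in $h$.

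First I would introduce the solution operator $ K _h \colon W _h ^k \to W _h ^k $ of the \emph{unmodified} discrete source problem \eqref{eqn:discreteProblem} and set $ T _h := i _h K _h P _h \colon W ^k \to W ^k $. Since $ P _h \circ i _h = \mathrm{id} $, the restriction $ T _h \vert _{ i _h W _h ^k } $ is conjugate through $ i _h $ to $ K _h $; consequently $ T _h $ is diagonalizable with real, nonnegative eigenvalues, and its nonzero spectrum is in multiplicity-preserving bijection with the eigenvalues $ \lambda _h $ of \eqref{eqn:evDiscreteProblem} via $ \mu = \lambda _h ^{-1} $, with the corresponding eigenspaces related by $ i _h $. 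Thus it suffices to prove $ T _h \to K $ in $ \mathcal{L} \left( W ^k , W ^k \right) $, after which the standard theory of spectral approximation for compact operators (norm convergence of $ T _h $ to the compact $ K $) delivers convergence of the eigenvalues with multiplicities and of the eigenspaces in the gap metric, which is exactly the asserted convergence of \eqref{eqn:evDiscreteProblem} to \eqref{eqn:evProblem}. (That $ T _h $ fails to be self-adjoint on $ W ^k $ is harmless, since it has real spectrum and is similar to the self-adjoint $ K _h $.)

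The estimate $ T _h \to K $ follows by the triangle inequality once I bound $ \bigl\lVert T _h - T _h ^\prime \bigr\rVert _{ \mathcal{L}( W ^k , W ^k ) } $. For $ f \in W ^k $, the element $ K _h ( P _h f ) $ solves \eqref{eqn:discreteProblem} with the admissible datum $ f _h = P _h f $, while $ K _h ^\prime ( P _h f ) $ solves \eqref{eqn:modifiedProblem} with datum $ i _h ^\ast f $ (indeed $ \langle J _h P _h f , v _h \rangle _h = \langle i _h ^\ast f , v _h \rangle _h $). Applying \autoref{thm:variationalPerturbation} to this pairing gives
\begin{equation*}
  \left\lVert ( K _h - K _h ^\prime ) P _h f \right\rVert _{ V _h } \leq C \left( \left\lVert P _h f - i _h ^\ast f \right\rVert _h + \left\lVert I - J _h \right\rVert \left\lVert f \right\rVert \right) \leq C \left\lVert I - J _h \right\rVert \left\lVert f \right\rVert ,
\end{equation*}
where the last step uses $ \lVert P _h f - i _h ^\ast f \rVert _h = \lVert ( J _h ^{-1} - I ) i _h ^\ast f \rVert _h \leq \lVert J _h ^{-1} \rVert \lVert I - J _h \rVert \lVert i _h ^\ast \rVert \lVert f \rVert $ together with the uniform bounds on $ \lVert i _h \rVert $ and $ \lVert J _h ^{-1} \rVert $. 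Multiplying by the uniformly bounded $ \lVert i _h \rVert $ yields $ \bigl\lVert T _h - T _h ^\prime \bigr\rVert \leq C \lVert I - J _h \rVert $, so combining with $ T _h ^\prime \to K $ shows $ \bigl\lVert T _h - K \bigr\rVert _{ \mathcal{L}( W ^k , W ^k ) } \to 0 $ as soon as $ \lVert I - J _h \rVert \to 0 $ — the same asymptotic-unitarity hypothesis already required for convergence of the mixed method.

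The main obstacle is bookkeeping rather than a genuinely new difficulty: one must keep straight the three inner products in play ($ \langle \cdot , \cdot \rangle $ on $ W $, $ \langle \cdot , \cdot \rangle _h $ on $ W _h $, and the modified $ \langle J _h \cdot , \cdot \rangle _h $), recognize that $ T _h $ is self-adjoint only with respect to the modified structure (equivalently, that $ i _h $ conjugates it to the $ \langle \cdot , \cdot \rangle _h $-self-adjoint $ K _h $) and hence has real spectrum even though it is not symmetric on $ W ^k $, and identify the correct lift $ P _h = J _h ^{-1} i _h ^\ast $ so that the spectrum of $ T _h $ actually coincides with the eigenvalues of \eqref{eqn:evDiscreteProblem}. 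Once these identifications are in place, \autoref{thm:variationalPerturbation} supplies the one new estimate needed, and the remainder is a routine appeal to the spectral-approximation results already invoked by \citet{ArFaWi2010}.
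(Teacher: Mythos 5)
Your proposal is correct and follows essentially the same route as the paper: reduce to norm convergence of $ i _h K _h P _h $ to the compact operator $K$, split via the triangle inequality through the modified (subcomplex) solution operator $ i _h K _h ^\prime P _h $, invoke the Arnold--Falk--Winther subcomplex results for that term, and control $ K _h - K _h ^\prime $ by \autoref{thm:variationalPerturbation}, yielding a bound of the form $ C \left\lVert I - J _h \right\rVert $. The only differences are expository: you write $ P _h = J _h ^{-1} i _h ^\ast $ (the paper uses $ \pi _h P _{ i _h W _h } $, which is the same operator since both compose with $ i _h $ to give the $W$-orthogonal projection onto $ i _h W _h $) and you make explicit the data-mismatch estimate $ \left\lVert P _h f - i _h ^\ast f \right\rVert _h \leq C \left\lVert I - J _h \right\rVert \left\lVert f \right\rVert $ that the paper leaves implicit.
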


\begin{proof}
  It suffices to show that $ i _h K _h P _h $ converges to $K$ in the
  $ \mathcal{L} \left( W ^k, W ^k \right) $ operator norm.  (As stated
  by \citet{ArFaWi2010}, the sufficiency of norm convergence follows
  from \citet{BoBrGa2000}.)  Using the triangle inequality, we write
  \begin{equation*}
    \left\lVert K - i _h K _h P _h \right\rVert \leq \left\lVert K - i
      _h K _h ^\prime P _h \right\rVert + \left\lVert i _h \left( K _h
        ^\prime - K _h \right) P _h \right\rVert .
  \end{equation*}
  The first term on the right-hand side converges to zero, by
  \citet[Corollary 3.17]{ArFaWi2010}.  For the second term, recall
  that $ i _h $ and $ \pi _h $ are assumed to be bounded uniformly
  with respect to $h$, and since $ \left\lVert P _h \right\rVert =
  \left\lVert \pi _h P _{ i _h W _h } \right\rVert \leq \left\lVert
    \pi _h \right\rVert $, it follows that $ P _h $ is bounded
  uniformly with respect to $h$, as well.  Therefore, it suffices to
  control $ \left\lVert K _h ^\prime - K _h \right\rVert $ in $
  \mathcal{L} \left( W _h ^k , W _h ^k \right) $.  However, the
  earlier analysis in \autoref{thm:variationalPerturbation} shows that
  $ \left\lVert K _h ^\prime - K _h \right\rVert \leq C \left\lVert I
    - J _h \right\rVert $, which completes the proof of convergence.
\end{proof}

\section{Application to differential forms on Riemannian manifolds}
\label{sec:diffForms}

In this section, we apply the framework developed in
\autoref{sec:crimes} to the Hodge--de~Rham complex of differential
forms on a compact oriented Riemannian manifold.  We will begin by
first recalling the basic definitions of the de~Rham complex of smooth
forms; its completion as a Hilbert complex, called the $ L ^2
$-de~Rham complex; and the corresponding domain complex, which
dovetails with the theory of Sobolev spaces.  Next, we discuss the
general problem of approximating the de~Rham complex on a manifold $ M
$ by a family of ``nearby'' manifolds $ M _h $, each equipped with an
orientation-preserving diffeomorphism $ \varphi _h \colon M _h
\rightarrow M $.  We subsequently establish the correspondence between
this setup and the generalized Hilbert complex approximation framework
of \autoref{sec:crimes}, obtaining estimates for the appropriate maps,
as needed.  We then specialize the discussion a bit further by
considering the case when $M$ is a submanifold of some larger manifold
$N$; in this case, the approximating submanifolds $ M _h \subset N $
can be taken to lie in a tubular neighborhood of $M$, and $ \varphi _h
\colon M _h \rightarrow M $ is obtained by projection along normals.

Finally, we then look at the specific case where $ N = \mathbb{R}^n $,
and where we wish to approximate a solution on some $m$-dimensional
Euclidean hypersurface $ M \subset \mathbb{R}^n $, $ n = m + 1 $, by
finite elements defined on a piecewise-linear mesh $ M _h \subset
\mathbb{R}^n $.  This is now the realm of of surface finite element
methods, as analyzed in \citet{Dziuk1988,DeDz2007,Demlow2009}.  We
subsequently show how our results of the previous sections recover the
analysis framework and \emph{a priori} estimates of
\citet{Dziuk1988,DeDz2007,Demlow2009}, extending their results from
scalar functions on 2- and 3-surfaces to general $k$-forms on
arbitrary dimensional hypersurfaces.  We also indicate how our results
generalize the \emph{a priori} estimates of
\citet{Dziuk1988,Demlow2009} from nodal finite element methods for the
Laplace--Beltrami operator to mixed finite element methods for the
Hodge Laplacian.

\subsection{A brief review of Hodge--de~Rham theory}

Given a smooth, $m$-dimensional manifold $M$, let $ \Omega ^k (M) $
denote the space of smooth $k$-forms on $M$ for $ k = 0, 1, \ldots, m
$, and let $ \mathrm{d} ^k \colon \Omega ^k (M) \rightarrow \Omega ^{
  k + 1 } (M) $ be the exterior derivative for $ k = 0 , 1, \ldots, m
- 1 $.  Then $ \left( \Omega (M) , \mathrm{d} \right) $ is a cochain
complex,
\begin{equation*}
  \xymatrix@=2em{
    0 \ar[r] & \Omega ^0 (M) \ar[r]^-{\mathrm{d}} & \Omega ^1 (M)
    \ar[r]^-{\mathrm{d}} & \cdots \ar[r]^-{\mathrm{d}} & \Omega ^m (M)
    \ar[r] & 0 .
  }
\end{equation*}
called the \emph{de~Rham complex} on $M$.

Suppose that, in addition, $M$ is oriented and compact, and has a
Riemannian metric $g$.  Then, we can define the \emph{$ L ^2 $-inner
  product} of any $ u, v \in \Omega ^k (M) $ to be
\begin{equation*}
  \left\langle u, v \right\rangle _{ L ^2 \Omega (M) } = \int _M u \wedge \star _g v = \int
  _M \left\langle \! \left\langle u, v \right\rangle \! \right\rangle
  _g \mu _g .
\end{equation*}
Here, $ \star _g \colon \Omega ^k (M) \rightarrow \Omega ^{ m - k }
(M) $ is the Hodge star operator associated to the metric, $
\left\langle \! \left\langle \cdot , \cdot \right\rangle \!
\right\rangle _g \colon \Omega ^k (M) \times \Omega ^k (M) \rightarrow
C ^\infty (M) $ is the pointwise inner product induced by the metric
and $ \mu _g $ is the Riemannian volume form.  (The Hodge star is
defined precisely so that $ u \wedge \star _g v = \left\langle \!
  \left\langle u, v \right\rangle \!  \right\rangle _g \mu _g $, and
it follows that $ \star _g $ is an isometry.)  The Hilbert space $ L
^2 \Omega ^k (M) $ is then defined, for each $k$, to be the completion
of $ \Omega ^k (M) $ with respect to the $ L ^2 $-inner product.

To show that this forms a Hilbert complex $ \left( L ^2 \Omega (M) ,
  \mathrm{d} \right) $, we must now define the \emph{weak exterior
  derivative} $ \mathrm{d} ^k $ on some dense domain of $ L ^2 \Omega
^k (M) $.  Given $ u \in L ^2 \Omega ^k (M) $, we say that $w \in L ^2
\Omega ^{ k + 1 } (M) $ is the weak exterior derivative of $u$, and
write $ \mathrm{d} u = w $, if
\begin{equation*}
  \left\langle u, \mathrm{d} ^\ast v \right\rangle _{ L ^2 \Omega (M)
  } = \left\langle w, v \right\rangle _{ L ^2 \Omega (M) } , \quad
  \forall v \in \Omega ^{k+1} _c (M) ,
\end{equation*}
where $ \Omega ^{k+1} _c (M) $ denotes the space of smooth
$(k+1)$-forms with compact support. Therefore, one defines the dense
domains $ H \Omega ^k (M) \subset L ^2 \Omega ^k (M) $, consisting of
elements in $ L ^2 \Omega ^k (M) $ that have a weak exterior
derivative in $ L ^2 \Omega ^{ k + 1 } (M) $.  Thus, we have
\begin{equation*}
  \xymatrix@=2em{
    0 \ar[r] & H \Omega ^0 (M) \ar[r]^-{\mathrm{d}} & H \Omega ^1 (M)
    \ar[r]^-{\mathrm{d}} & \cdots \ar[r]^-{\mathrm{d}} & H \Omega ^m (M)
    \ar[r] & 0 .
  }
\end{equation*}
where each $ H \Omega ^k (M) $ can be given the graph inner product
\begin{equation*}
  \left\langle u, v \right\rangle _{ H \Omega (M) } = \left\langle u,
    v \right\rangle _{ L ^2 \Omega (M) } + \left\langle \mathrm{d} u ,
    \mathrm{d} v \right\rangle _{ L ^2 \Omega (M) } .
\end{equation*}
(Note the similarity with the definition of the Sobolev spaces $ H ^1
$, $ H \left( \operatorname{curl} \right) $, and $ H \left(
  \operatorname{div} \right) $.)  Since each $ H \Omega ^k (M) $ is
complete, it follows that $ \mathrm{d} ^k $ is a closed operator;
therefore, $ \left( L ^2 \Omega (M) , \mathrm{d} \right) $ is indeed a
Hilbert complex, and $ \left( H \Omega (M) , \mathrm{d} \right) $ is
the corresponding domain complex.  Furthermore, it can be shown that $
\left( L ^2 \Omega (M) , \mathrm{d} \right) $ satisfies the
compactness condition, so these Hilbert complexes are in fact closed
and satisfy the conditions necessary for the improved error estimates.
(For more details on the construction of these complexes, see
\citet{ArFaWi2010}.)

\subsection{Diffeomorphic Riemannian manifolds}

Let $ \left( M , g \right) $ be an oriented, compact, $m$-dimensional
Riemannian manifold, and suppose $ \left( M _h , g _h \right) $ is a
family of oriented, compact Riemannian manifolds, parametrized by $h$
and equipped with orientation-preserving diffeomorphisms $ \varphi _h
\colon M _h \rightarrow M $.  Now, since the pullback $ \varphi _h
^\ast \colon \Omega (M) \rightarrow \Omega \left( M _ h \right) $ and
pushforward $ \varphi _{ h \ast } \colon \Omega \left( M _h \right)
\rightarrow \Omega (M) $ commute with the exterior derivative, they
give a cochain isomorphism between the smooth de~Rham complexes $
\Omega \left( M _h \right) $ and $ \Omega \left( M \right) $.

We now show that these maps are bounded, and hence can be extended to
Hilbert complex isomorphisms between $ L ^2 \Omega \left( M _h \right)
$ and $ L ^2 \Omega (M) $, following the results of
\citet{Stern2010a}.  Given any point $ x \in M _h $, let $ \left\{ e
  _1, \ldots, e _m \right\} $ be a positively-oriented, $ g _h
$-orthonormal basis of the tangent space $ T _x M _h $, and let $
\left\{ f _1, \ldots, f _m \right\} $ be a positively-oriented, $ g
$-orthonormal basis of $ T _{ \varphi _h (x) } M $.  Then, with
respect to these bases, the tangent map $ T _x \varphi _h \colon T _x
M _h \rightarrow T _{ \varphi _h (x) } M $ can be represented by an $
m \times m $ matrix $\Phi$.  Moreover, since $\varphi _h$ is a
diffeomorphism, the matrix $\Phi$ has $m$ strictly positive singular
values,
\begin{equation*}
  \alpha _1 (x) \geq \cdots \geq \alpha _m (x) > 0 .
\end{equation*}
These singular values are orthogonally invariant, so they are
independent of the choice of basis at each $ x $ and $ \varphi _h (x)
$.  Hence, they are an intrinsic property of the diffeomorphism, and
thus we refer to them as the \emph{singular values of $ \varphi _h $ at
  $x$}.

\begin{theorem}[\citet{Stern2010a}, Corollary 6]
  \label{thm:pushforward}
  Let $ \left( M _h , g _h \right) $ and $ \left( M, g \right) $ be
  oriented, $m$-dimensional Riemannian manifolds, and let $ \varphi _h
  \colon M _h \rightarrow M $ be an orientation-preserving
  diffeomorphism with singular values $ \alpha _1 (x) \geq \cdots \geq
  \alpha _n (x) > 0 $ at each $ x \in M _h $.  Given $ p , q \in
  \left[ 1, \infty \right] $ such that $ 1/p + 1/q = 1 $, and some $ k
  = 0 , \ldots, m $, suppose that the product $ \left( \alpha _1
    \cdots \alpha _{ m - k } \right) ^{1/p} \left( \alpha _{ m - k + 1
    } \cdots \alpha _m \right) ^{ -1/q } $ is bounded uniformly on $M
  _h$.  Then, for any $ \omega \in L ^p \Omega ^k \left( M _h \right)
  $,
  \begin{multline*}
    \bigl\lVert \left( \alpha _1 \cdots \alpha _k \right) ^{ 1/q }
    \left( \alpha _{ k + 1 } \cdots \alpha _m \right) ^{-1/p}
    \bigr\rVert _\infty ^{-1} \left\lVert \omega
    \right\rVert _p  \\
    \leq \left\lVert \varphi _{h\ast} \omega \right\rVert _p \leq
    \bigl\lVert \left( \alpha _1 \cdots \alpha _{ m - k } \right)
    ^{1/p} \left( \alpha _{ m - k + 1 } \cdots \alpha _m \right) ^{
      -1/q }\bigr\rVert _\infty \left\lVert \omega \right\rVert _p .
  \end{multline*}
\end{theorem}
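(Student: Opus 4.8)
The plan is to reduce the estimate to a pointwise question of multilinear algebra at each point of $M_h$, and then integrate using the change-of-variables formula. Since $\varphi_h$ is a diffeomorphism, $\varphi_{h\ast} = (\varphi_h^{-1})^\ast$, so at a point $x \in M_h$ the value $(\varphi_{h\ast}\omega)_{\varphi_h(x)}$ is obtained from $\omega_x$ by precomposition with $\Lambda^k$ of the inverse tangent map $(T_x\varphi_h)^{-1}$. Using the singular value decomposition, I would choose a $g_h$-orthonormal basis $\{e_1,\dots,e_m\}$ of $T_xM_h$ and a $g$-orthonormal basis $\{f_1,\dots,f_m\}$ of $T_{\varphi_h(x)}M$ with $T_x\varphi_h(e_i) = \alpha_i(x)\,f_i$, and let $\{e^i\},\{f^i\}$ be the dual bases. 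A direct computation then gives, for $i_1 < \cdots < i_k$, that $e^{i_1}\wedge\cdots\wedge e^{i_k}$ is sent to $(\alpha_{i_1}\cdots\alpha_{i_k})^{-1}\,f^{i_1}\wedge\cdots\wedge f^{i_k}$. Since the induced wedge bases are orthonormal for $g_h$ and $g$ respectively, taking the $k$ largest and the $k$ smallest singular values yields the pointwise two-sided bound
\begin{equation*}
  (\alpha_1\cdots\alpha_k)^{-1}\,\lvert\omega\rvert_x
  \;\le\; \bigl\lvert \varphi_{h\ast}\omega\bigr\rvert_{\varphi_h(x)}
  \;\le\; (\alpha_{m-k+1}\cdots\alpha_m)^{-1}\,\lvert\omega\rvert_x .
\end{equation*}

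Next I would pass from pointwise bounds to $L^p$ bounds. Writing $\mu_g$, $\mu_{g_h}$ for the Riemannian volume forms, the fact that the Jacobian of an orientation-preserving diffeomorphism between oriented Riemannian manifolds is the product of the singular values of its tangent map gives $\varphi_h^\ast\mu_g = (\alpha_1\cdots\alpha_m)\,\mu_{g_h}$. For $p < \infty$ one then has
\begin{equation*}
  \bigl\lVert \varphi_{h\ast}\omega \bigr\rVert_p^p
  = \int_M \bigl\lvert \varphi_{h\ast}\omega\bigr\rvert^p \mu_g
  = \int_{M_h} \bigl\lvert \varphi_{h\ast}\omega\bigr\rvert_{\varphi_h(x)}^p\,(\alpha_1\cdots\alpha_m)\,\mu_{g_h},
\end{equation*}
into which I substitute the pointwise bounds. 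For the upper bound the singular-value factor in the integrand becomes $(\alpha_1\cdots\alpha_{m-k})\,(\alpha_{m-k+1}\cdots\alpha_m)^{1-p}$, which, using $1-p = -p/q$, is exactly the $p$-th power of $(\alpha_1\cdots\alpha_{m-k})^{1/p}(\alpha_{m-k+1}\cdots\alpha_m)^{-1/q}$; pulling the $L^\infty$ norm of that combination out of the integral leaves $\lVert\omega\rVert_p^p$. The lower bound is symmetric: the factor $(\alpha_1\cdots\alpha_k)^{1-p}(\alpha_{k+1}\cdots\alpha_m)$ is the $(-p)$-th power of $(\alpha_1\cdots\alpha_k)^{1/q}(\alpha_{k+1}\cdots\alpha_m)^{-1/p}$, so bounding that combination pointwise by its supremum produces the reciprocal prefactor in the claim. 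Taking $p$-th roots completes the case $p<\infty$; the endpoint $p=\infty$, $q=1$ is handled directly, since no volume form enters an essential supremum and the pointwise bound applies verbatim. The bound on smooth forms then extends to $L^p\Omega^k(M_h)$ by density.

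I expect the only genuine bookkeeping to be in that last step: tracking how the pointwise operator norm $(\alpha_{m-k+1}\cdots\alpha_m)^{-1}$, the Jacobian factor $(\alpha_1\cdots\alpha_m)$, and the conjugacy identity $1/p+1/q=1$ combine into the stated $L^\infty$ norms. The hypothesis that $(\alpha_1\cdots\alpha_{m-k})^{1/p}(\alpha_{m-k+1}\cdots\alpha_m)^{-1/q}$ is bounded uniformly on $M_h$ is used precisely here, to guarantee the right-hand side is finite and hence that $\varphi_{h\ast}$ maps $L^p\Omega^k(M_h)$ boundedly into $L^p\Omega^k(M)$. Everything else — the SVD normal form, the identification of the induced map on $\Lambda^k$, and the orthonormality of the wedge bases — is routine linear algebra.
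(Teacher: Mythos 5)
Your proposal is correct and follows essentially the same route as the paper's own sketch: a pointwise bound on the pushforward in terms of the reciprocals of the $k$ smallest singular values (your SVD computation on wedge bases makes explicit what the paper asserts from $k$-linearity and antisymmetry), followed by the change-of-variables formula introducing the Jacobian factor $\alpha_1\cdots\alpha_m$, H\"older to extract the $L^\infty$ norm, and the conjugacy relation $1-p=-p/q$ to arrive at the stated exponents; your lower-bound argument via the pointwise bound $\lvert\varphi_{h\ast}\omega\rvert \geq (\alpha_1\cdots\alpha_k)^{-1}\lvert\omega\rvert$ is equivalent to the paper's use of $\omega = \varphi_h^\ast\varphi_{h\ast}\omega$ with the pullback inequality. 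No gaps of substance.
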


\begin{proof}[Sketch of proof]
  At each point, a $k$-form is $k$-linear and totally antisymmetric.
  Therefore, the pullback is controlled pointwise by the product of
  the $k$ largest singular values of $\varphi _h$, while the
  pushforward is controlled by the product of the $k$ largest singular
  values of $ \varphi _h ^{-1} $ (i.e., the reciprocals of the $k$
  smallest singular values of $\varphi _h $).  Thus, we obtain
  pointwise inequalities
  \begin{equation*}
    \left\lvert \varphi _h ^\ast \eta \right\rvert \leq \alpha _1 \cdots
    \alpha _k \left( \left\lvert \eta \right\rvert \circ \varphi _h 
    \right) , \qquad 
    \left\lvert \varphi _{h \ast} \omega \right\rvert \leq \bigl[ \left(
      \alpha _{ m - k + 1 } \cdots \alpha _m \right) ^{-1} \left\lvert
      \omega \right\rvert \bigr] \circ \varphi _h  ^{-1} .
  \end{equation*}
  For the $ L ^p $ upper bound, we can apply the pushforward
  inequality to get a factor of $ \left( \alpha _{ m - k + 1 } \cdots
    \alpha _m \right) ^{-p} $ in the integrand.  Using the change of
  variables theorem introduces the Jacobian determinant $ \alpha _1
  \cdots \alpha _m $, so multiplying by this gives a factor of $
  \alpha _1 \cdots \alpha _{ m - k } \left( \alpha _{ m - k + 1 }
    \cdots \alpha _m \right) ^{-p+1} $.  We can then use H\"older's
  inequality to pull out the $ L ^\infty $-norm of this expression,
  and raising to the exponent $ 1/p $ gives 
  \begin{multline*} 
    \bigl\lVert \left( \alpha _1 \cdots \alpha _{ m - k } \right)
    ^{1/p} \left( \alpha _{ m - k + 1 } \cdots \alpha _m \right)
    ^{-1+1/p} \bigr\rVert _\infty \\
    = \bigl\lVert \left( \alpha _1 \cdots \alpha _{ m - k } \right)
    ^{1/p} \left( \alpha _{ m - k + 1 } \cdots \alpha _m \right)
    ^{-1/q} \bigr\rVert _\infty ,
  \end{multline*}
  as desired.  The lower bound follows in a similar fashion, starting
  with the identity $ \omega = \varphi ^\ast _h \varphi _{ h \ast }
  \omega $ and applying the pointwise pullback inequality.
\end{proof}

Since $ M $ and $ M _h $ are compact, the uniform boundedness
hypothesis of this theorem is clearly satisfied.  Therefore, taking $
p = q = 2 $, it follows that the diffeomorphism $ \varphi _h $ induces
Hilbert complex isomorphisms $ \varphi _{ h \ast } \colon L ^2 \Omega
\left( M _h \right) \rightarrow L ^2 \Omega (M) $ and $ \varphi _h
^\ast \colon L ^2 \Omega \left( M \right) \rightarrow L ^2 \Omega
\left( M _h \right) $.

Now, take $ W = L ^2 \Omega (M) $, and suppose that we have discrete
subcomplexes $ W _h \subset L ^2 \Omega \left( M _h \right) $ with
inclusion morphisms $ i _h ^\prime \colon W _h \hookrightarrow L ^2
\Omega \left( M _h \right) $, as well as projection morphisms $ \pi _h
^\prime \colon L ^2 \Omega \left( M _h \right) \rightarrow W _h $
bounded uniformly in $h$.  Following the approach of
\autoref{thm:projection}, we can pull these back to obtain the
injection morphisms $ i _h = \varphi _{ h \ast } \circ i _h ^\prime
\colon W _h \hookrightarrow W $ and projection morphisms $ \pi _h =
\pi _h ^\prime \circ \varphi _h ^\ast \colon W \rightarrow W _h $,
which satisfy $ \pi _h \circ i _h = \mathrm{id} _{ W _h } $.  An
important consequence of this is stated in the following corollary of
\autoref{thm:projection} and \autoref{thm:pushforward}.

\begin{corollary}
  \label{cor:inducedfeec}
  Orientation-preserving diffeomorphisms induce an equivalence of
  families of finite element subcomplexes of the $ L ^2 $-de~Rham
  complex with bounded cochain projections.  In particular, any
  triangulation $ \mathcal{T} _h \rightarrow M $ gives corresponding $
  \mathcal{P} _r ^- $ and $ \mathcal{P} _r $ families
  (cf.~\citet{ArFaWi2006,ArFaWi2010}) of piecewise-polynomial
  differential forms on $M$.
\end{corollary}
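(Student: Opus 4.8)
The plan is to assemble the corollary from \autoref{thm:pushforward} and \autoref{thm:projection}, essentially packaging the construction sketched just before its statement. First I would specialize \autoref{thm:pushforward} to $p = q = 2$: because $M$ and each $M_h$ are compact, the products of the singular values $\alpha_1(x), \dots, \alpha_m(x)$ of $\varphi_h$ and of their reciprocals are bounded on $M_h$, so the hypothesis is automatic, and the inequalities show that $\varphi_{h\ast} \colon L^2\Omega(M_h) \to L^2\Omega(M)$ is a bounded linear isomorphism with bounded inverse $\varphi_h^\ast$. Since pushforward and pullback commute with $\mathrm{d}$, these are mutually inverse isomorphisms of Hilbert complexes. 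It does no harm that, in the piecewise-linear case of interest, $M_h$ is only Lipschitz and $\varphi_h$ only piecewise $C^\infty$: the estimates of \autoref{thm:pushforward} are pointwise almost everywhere and $\varphi_h$ is bi-Lipschitz, so the $L^2$-de~Rham theory and the change-of-variables argument go through unchanged.

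Next I would pull back a given family of finite element subcomplexes. Suppose $W_h \subset L^2\Omega(M_h)$ is a subcomplex with inclusion morphism $i_h^\prime$ and cochain projection $\pi_h^\prime \colon L^2\Omega(M_h) \to W_h$. Applying \autoref{thm:projection} with $W = L^2\Omega(M)$, $W^\prime = L^2\Omega(M_h)$, and $f = \varphi_h^\ast$, and setting $i_h = \varphi_{h\ast} \circ i_h^\prime$ and $\pi_h = \pi_h^\prime \circ \varphi_h^\ast$, gives $\pi_h \circ i_h = \mathrm{id}_{W_h}$. Thus $i_h W_h \subset L^2\Omega(M)$ is again a finite element subcomplex, and composing $\pi_h$ with the inclusion of $i_h W_h$ into $L^2\Omega(M)$ yields a cochain projection onto it. The operator norms are controlled by $\lVert i_h \rVert \le \lVert \varphi_{h\ast} \rVert$ (since $i_h^\prime$ is an isometric inclusion) and $\lVert \pi_h \rVert \le \lVert \pi_h^\prime \rVert \, \lVert \varphi_h^\ast \rVert$, both expressed purely in terms of $L^\infty$ bounds on the singular values of $\varphi_h$ together with the norm of the original projection $\pi_h^\prime$.

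For the ``in particular'' statement, recall that \citet{ArFaWi2006,ArFaWi2010} construct, for a shape-regular family of simplicial triangulations, the polynomial differential form spaces $\mathcal{P}_r^-$ and $\mathcal{P}_r$ together with (bounded commuting or smoothed) cochain projections that are bounded uniformly in $h$. Viewing the triangulation $\mathcal{T}_h$ as the piecewise-flat manifold $M_h$, these furnish exactly the data $(W_h, i_h^\prime, \pi_h^\prime)$ above, and pushing forward along $\varphi_h$ produces the corresponding families on $M$. Uniform boundedness of the resulting $\pi_h$ then follows once the singular values of $\varphi_h$ are bounded above and away from zero uniformly in $h$, which, in the Euclidean hypersurface setting treated in the following subsections, holds because the normal-projection maps $\varphi_h$ converge to the identity as $h \to 0$.

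I expect the only real obstacle to be bookkeeping rather than analysis: verifying that the Lipschitz, piecewise-smooth regularity of $M_h$ and $\varphi_h$ does not disturb \autoref{thm:pushforward} or \autoref{thm:projection} (both of which are insensitive to it), and being careful about the direction of the maps so that $i_h^\prime = f \circ i_h$ in the notation of \autoref{thm:projection} really does match $i_h = \varphi_{h\ast} \circ i_h^\prime$. The genuine analytic content has already been isolated in those two results.
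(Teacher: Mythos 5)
Your proposal is correct and follows essentially the same route as the paper, which obtains this corollary directly from \autoref{thm:pushforward} (with $p=q=2$, the compactness of $M$ and $M_h$ giving the uniform boundedness hypothesis) and \autoref{thm:projection} via $i_h = \varphi_{h\ast}\circ i_h^\prime$ and $\pi_h = \pi_h^\prime \circ \varphi_h^\ast$. Your additional remarks on the Lipschitz/piecewise-smooth regularity of $M_h$ and on the direction of the maps in \autoref{thm:projection} are accurate but not part of the paper's (essentially immediate) argument.
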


Finally, let us see how this definition of $ i _h $ can be used to
control the error term $ \left\lVert I - J _h \right\rVert $.
\autoref{thm:pushforward} implies that, for any $ v _h \in V _h ^k $,
we have the estimate
\begin{multline*}
  \bigl\lVert\left( \alpha _1 \cdots \alpha _k \right) ^{ 1/2 } \left(
    \alpha _{ k + 1 } \cdots \alpha _m \right) ^{ -1/2 } \bigr\rVert
  ^{-1} _\infty \left\lVert v _h  \right\rVert _h \\
  \leq \left\lVert i _h v _h \right\rVert \leq \bigl\lVert \left(
    \alpha _1 \cdots \alpha _{ m - k } \right) ^{ 1/2 } \left( \alpha
    _{ m - k + 1 } \cdots \alpha _m \right) ^{ - 1/2 } \bigr\rVert
  _\infty \left\lVert v _h \right\rVert _h ,
\end{multline*}
and since $ J _h = i _h ^\ast i _h $, this implies
\begin{multline*}
  \bigl\lVert \alpha _1 \cdots \alpha _k \left( \alpha
    _{ k + 1 } \cdots \alpha _m \right) ^{ -1 } \bigr\rVert
  ^{-1} _\infty \left\lVert v _h  \right\rVert _h \\
  \leq \left\lVert J _h v _h \right\rVert _h \leq \bigl\lVert 
    \alpha _1 \cdots \alpha _{ m - k } \left( \alpha _{ m - k
      + 1 } \cdots \alpha _m \right) ^{ -1 } \bigr\rVert _\infty
  \left\lVert v _h \right\rVert _h .
\end{multline*}
This bounds the spectrum of the self-adjoint operator $ J _h $, so
finally we obtain a bound on the error term $ \left\lVert I - J _h
\right\rVert $ in terms of the singular values,
\begin{equation}
  \label{eqn:jacobianMax}
\begin{aligned}
  \left\lVert I - J _h \right\rVert \leq \max \Bigl\{ & \left\lvert 1
    - \bigl\lVert \alpha _1 \cdots \alpha _k \left( \alpha _{ k + 1 }
      \cdots \alpha _m \right) ^{ -1 } \bigr\rVert ^{-1} _\infty
  \right\rvert , \\
  & \left\lvert 1 - \bigl\lVert \alpha _1 \cdots \alpha _{ m - k }
    \left( \alpha _{ m - k + 1 } \cdots \alpha _m \right) ^{ -1 }
    \bigr\rVert _\infty \right\rvert \Bigr\} .
\end{aligned} 
\end{equation}
It follows that, if each singular value satisfies $ \left\lvert 1 -
  \alpha _i \right\rvert \leq C h ^{ s + 1 } $, then $ \left\lVert I -
  J _h \right\rVert \leq C h ^{ s + 1} $ as well, and moreover this
will hold for every $ k = 0, \ldots, m $.  Obtaining such bounds on
the singular values, for particular choices of $ \varphi _h $, will be
the topic of the next subsection.

\subsection{Tubular neighborhoods and Euclidean hypersurfaces}

Suppose that $ \left( N , \gamma \right) $ is an oriented,
$n$-dimensional Riemannian manifold, and let $ j \colon M
\hookrightarrow N $ be the inclusion of a submanifold $M$, endowed
with the metric $ g = j ^\ast \gamma $ inherited from $N$.  If $M$ is
compact, then it is possible to construct a \emph{tubular neighborhood}
$U$ around $M$; this is diffeomorphic to an open neighborhood of the
zero section of the normal bundle of $M$, so there is a normal
projection map $ a \colon U \rightarrow M $.  In particular, there
exists some $ \delta _0 > 0 $ such that the set $ M _{ \delta _0 } $,
consisting of points in $N$ whose Riemannian distance to $M$ is less
than $\delta _0 $, is contained in $U$.  (For details, see, e.g.,
\citet{AbMa1978,Lang2002,Lee1997}.)  Now, let $ j _h \colon M _h
\hookrightarrow N $ be a family of inclusions of $m$-dimensional
submanifolds $ M _h $, parametrized by $h$, each endowed with the
Riemannian metric $ g _h = j _h ^\ast \gamma $.  If $ M _h $ lies
inside the tubular neighborhood $U$ and is transverse to $a$ (i.e., $
M _h $ corresponds to a section of $a$), then it is possible to define
the diffeomorphism $ \varphi _h = a \rvert _{ M _h } \colon M _h
\rightarrow M $.

An important case is when $ N = \mathbb{R}^n $, where $ n = m + 1 $
and $ \gamma $ is the standard Euclidean metric, so that $M \subset
\mathbb{R} ^n $ is an oriented Euclidean hypersurface.  It is possible
to define a signed distance function $ \delta \colon U \rightarrow
\mathbb{R} $ on the tubular neighborhood, so that $ \left\lvert \delta
  (x) \right\rvert = \operatorname{dist} \left( x, M \right) $ and $
\nabla \delta (x) = \nu (x) $ is the outward-facing unit normal to $M$
at $ a (x) $.  Every point $ x \in U $ in the tubular neighborhood has
a unique decomposition
\begin{equation*}
x = a (x) + \delta (x) \nu (x) ,
\end{equation*}
so the normal projection map $ a \colon U \rightarrow M $ can be
written as
\begin{equation*}
  a (x) = x - \delta (x) \nu (x) .
\end{equation*}
Therefore,
\begin{equation*}
  \nabla a = I - \nabla \delta \otimes \nu - \delta \nabla  \nu = I -
  \nu \otimes \nu - \delta \nabla \nu = P + \delta S ,
\end{equation*}
where $P = I - \nu \otimes \nu $ is the projection map onto $ T M $
and $ S = - \nabla \nu = - \nabla ^2 \delta $ is the shape operator,
or Weingarten map.  (Note that \citet{Dziuk1988,DeDz2007,Demlow2009}
define a Weingarten map $ H = - S $ using the opposite sign
convention, but this is less common in the differential geometry
literature.)

Instead of directly computing the tangent map $ T a \colon U
\rightarrow M $, we can look at its adjoint, which ``lifts'' vectors
on $M$ to those on $U$.  Given the pullback map $ a ^\ast \colon
\Omega ^1 (M) \rightarrow \Omega ^1 (U) $, the metric can then be used
to identify covectors with vectors, thereby obtaining a pullback map
of vector fields $ \mathfrak{X} (M) \rightarrow \mathfrak{X} (U) $.
Specifically, let $ Y \in T _y M $ and $ x \in a ^{-1} (y) \subset U
$.  Then define the lifted vector $ a ^\ast Y \in T _x U $ satisfying
\begin{equation*}
  X \cdot a ^\ast Y = T a (X) \cdot Y, \quad \forall X \in T _x U .
\end{equation*}
In terms of the Riemannian sharp and flat maps, this can be written as
\begin{equation*}
  \left[ a ^\ast (Y) \right] ^\flat = a ^\ast \bigl( Y ^\flat \bigr)
  \Longleftrightarrow a ^\ast Y = \bigl[ a ^\ast \bigl( Y ^\flat
    \bigr) \bigr] ^\sharp .
\end{equation*}
The following theorem allows us to compute this lifted vector
explicitly, in terms of the signed distance function and shape
operator.

\begin{theorem}
  Let $M$ be an oriented, compact, $m$-dimensional hypersurface of $
  \mathbb{R} ^{ m + 1 } $ with a tubular neighborhood $U$.  If $ Y \in
  T _y M $ and $ x \in a ^{-1} (y) \subset U $, then the lifted vector
  $ a ^\ast Y \in T _x U $ satisfies
  \begin{equation*} 
    a ^\ast Y = \left( I + \delta S \right) Y .
  \end{equation*}
\end{theorem}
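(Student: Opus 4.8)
The plan is to compute the tangent map $T _x a$ explicitly from the formula for $\nabla a$ already derived, and then extract its adjoint from the defining relation for $a ^\ast Y$. Since $U$ is open in $\mathbb{R}^{m+1}$, we may identify $T _x U$ with $\mathbb{R}^{m+1}$, and the differential of $a$, viewed as a map into $\mathbb{R}^{m+1}$, is simply $X \mapsto (\nabla a)(x)\, X$. Using the expression $\nabla a = P + \delta S$ recorded above, we therefore have $T _x a (X) = (P + \delta S) X$ for every $X \in T _x U$, where $P = I - \nu \otimes \nu$ and $S = - \nabla ^2 \delta$ are understood to be evaluated at $x$.

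Next I would substitute this into the defining equation $X \cdot a ^\ast Y = T _x a (X) \cdot Y$, which is required to hold for all $X \in T _x U = \mathbb{R}^{m+1}$. Because $P$ and $S$ are symmetric matrices — $P$ visibly, and $S$ because it is a Hessian — we obtain $X \cdot a ^\ast Y = \bigl( (P + \delta S) X \bigr) \cdot Y = X \cdot \bigl( (P + \delta S) Y \bigr)$ for all $X \in \mathbb{R}^{m+1}$, and hence $a ^\ast Y = (P + \delta S) Y = P Y + \delta S Y$. At this point it only remains to identify $P Y$.

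The one small subtlety is that $P$ is evaluated at $x$ rather than at $y = a(x)$; but by the very definition of the signed distance function one has $\nabla \delta (x) = \nu(x) = \nu\bigl( a(x) \bigr)$, so $P(x) = I - \nu(x) \otimes \nu(x)$ is precisely the orthogonal projection of $\mathbb{R}^{m+1}$ onto $T _y M$. Since $Y \in T _y M$ by hypothesis, $P Y = Y$, and therefore $a ^\ast Y = Y + \delta S Y = (I + \delta S) Y$, as claimed. The argument is short, and the only thing to watch is matching the basepoints of $\nu$, $P$, and $S$ correctly; the symmetry of $\nabla a$ is exactly what allows its Euclidean adjoint to be read off without any transpose appearing.
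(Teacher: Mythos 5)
Your proposal is correct. It takes a mildly different route from the paper's proof: you work directly from the duality characterization $X \cdot a ^\ast Y = T _x a (X) \cdot Y$, identify $T _x a$ with the Jacobian $\nabla a = P + \delta S$, and use the symmetry of $P$ and of $S = - \nabla ^2 \delta$ to read off $a ^\ast Y = \left( P + \delta S \right) Y$ with no transpose appearing; the paper instead starts from the equivalent form-level definition $a ^\ast Y = \bigl[ a ^\ast \bigl( Y ^\flat \bigr) \bigr] ^\sharp$, extends $Y$ to a constant field so that $Y = \nabla \psi$ for the linear function $\psi (z) = z \cdot Y$, and uses the fact that $\mathrm{d}$ commutes with pullback to obtain $a ^\ast Y = \nabla \left( \psi \circ a \right)$, after which the chain rule gives the same expression. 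Both arguments conclude identically with $P Y = Y$. Your version is somewhat more elementary (pure linear algebra once $\nabla a$ is computed) and makes explicit two points the paper leaves implicit: that the symmetry of $\nabla a$ is exactly what lets its Euclidean adjoint be identified with $\nabla a$ itself, and that $P (x)$ is the projection onto $T _y M$ because $\nu (x) = \nu \bigl( a (x) \bigr)$, so the basepoints match. The paper's potential-function argument, in exchange, stays entirely at the level of pullbacks of forms, in keeping with the rest of the section, and avoids any explicit discussion of adjoints; in the end both are computations of the same quantity, the Euclidean adjoint of $T _x a$ applied to $Y$.
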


\begin{proof}
  Extend $Y$ to a constant vector field on $\mathbb{R}^{m+1}$, so that
  $ Y = \nabla \psi (y) $ for the scalar function $ \psi (x) = x \cdot
  Y $.  Using the definition of the gradient $ \nabla \psi = \left(
    \mathrm{d} \psi \right) ^\sharp $, and the fact that the exterior
  derivative $ \mathrm{d} $ commutes with pullback, we have the
  following chain of equalities:
  \begin{equation*}
    a ^\ast Y = a
    ^\ast \left( \nabla \psi \right) = \left[ a ^\ast \left( \mathrm{d}
        \psi \right) \right] ^\sharp = \left[ \mathrm{d} \left( a ^\ast
        \psi \right) \right] ^\sharp = \nabla \left( \psi \circ a \right)
    .
  \end{equation*} 
  Therefore, applying the chain rule, we get
  \begin{equation*}
    a ^\ast Y = \nabla a (x) \cdot \nabla \psi \left( a (x) \right) =
    \left( P + \delta S \right) Y = \left( I + \delta S \right) Y ,
  \end{equation*}
  where the last equality follows from $ P Y = Y $.
\end{proof}

Finally, when $ x \in M _h $, we can restrict to $ T _x M _h $ by
composing with the adjoint of $ j _h $, i.e., the projection $ P _h =
I - \nu _h \otimes \nu _h $, which gives
\begin{equation*}
  Y _h = j _h ^\ast a ^\ast Y = P _h \left( I + \delta S \right) Y .
\end{equation*}
This map $ j _h ^\ast a ^\ast = P _h \left( I + \delta S \right) $ is
immediately seen to be the adjoint of the restricted tangent map $ T
\varphi _h = T \left( a \rvert _{ M _h } \right) = T \left( a \circ j
  _h \right) = T a \circ T j _h $.  In the next theorem, we bound the
singular values of this map, thereby obtaining an estimate for the
error term $ \left\lVert I - J _h \right\rVert $ in the case of
Euclidean hypersurfaces.

\begin{theorem} \label{thm:jacobianBound}
  Given an oriented, compact, $m$-dimensional hypersurface $ M \subset
  \mathbb{R} ^{ m + 1 } $ with a tubular neighborhood $U$, let $ M _h
  $ be a family of hypersurfaces lying in $U$ and transverse to its
  fibers, such that $ \left\lVert \delta \right\rVert _\infty ,
  \left\lVert \nu - \nu _h \right\rVert _\infty \rightarrow 0 $ as $ h
  \rightarrow 0 $.  Then, for sufficiently small $h$,
  \begin{equation*}
    \left\lVert I - J _h \right\rVert \leq C \left( \left\lVert \delta
      \right\rVert _\infty + \left\lVert \nu - \nu _h \right\rVert
      _\infty ^2 \right) .
  \end{equation*}
\end{theorem}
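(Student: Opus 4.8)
The plan is to bound the singular values $\alpha_1(x) \ge \cdots \ge \alpha_m(x) > 0$ of the diffeomorphism $\varphi_h = a\rvert_{M_h}\colon M_h \to M$ and then apply the estimate \eqref{eqn:jacobianMax}, which already reduces the bound on $\lVert I - J_h\rVert$ to a uniform estimate of the form $\lvert 1 - \alpha_i(x)\rvert \le \rho$ for all $x$ and all $i$. Writing $\rho := C(\lVert\delta\rVert_\infty + \lVert\nu - \nu_h\rVert_\infty^2)$, once I show $\lvert 1 - \alpha_i\rvert \le \rho$ uniformly, each of the products of (at most $m$) singular values appearing in \eqref{eqn:jacobianMax} lies within $O(m\rho)$ of $1$ for $\rho$ small, so that $\lVert I - J_h\rVert \le Cm\rho$, which gives the claim after absorbing $m$ into the constant.

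To estimate the $\alpha_i$, I would work with the adjoint of the tangent map, which by the computation preceding the theorem is $(T_x\varphi_h)^\ast = j_h^\ast a^\ast = P_h(I + \delta S)\colon T_{\varphi_h(x)}M \to T_x M_h$, both tangent spaces being viewed as subspaces of $\mathbb{R}^{m+1}$. Since a linear map and its adjoint have the same singular values, it suffices to control $A := P_h(I+\delta S)\rvert_{T_yM}$, where $y = \varphi_h(x)$. For a unit vector $Y \in T_yM$, using that $P_h = I - \nu_h\otimes\nu_h$ is orthogonal projection onto $T_xM_h$,
\begin{equation*}
  \lvert AY\rvert^2 = \bigl\lvert (I+\delta S)Y\bigr\rvert^2 - \bigl\langle (I+\delta S)Y,\, \nu_h\bigr\rangle^2 .
\end{equation*}
The first term equals $1 + 2\delta\langle SY,Y\rangle + \delta^2\lvert SY\rvert^2 = 1 + O(\lVert\delta\rVert_\infty)$, since $S = -\nabla^2\delta$ is smooth and hence bounded on a fixed compact neighborhood of $M$ that contains all $M_h$ for $h$ small. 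For the second term, the key observation — which is what produces the \emph{quadratic} dependence on $\lVert\nu - \nu_h\rVert_\infty$ — is that $\langle Y, \nu\rangle = 0$, because $\nu(x) = \nabla\delta(x)$ is the normal to $M$ at $a(x) = y$; hence $\langle (I+\delta S)Y, \nu_h\rangle = \langle Y, \nu_h - \nu\rangle + \delta\langle SY, \nu_h\rangle$, which is $O(\lVert\nu - \nu_h\rVert_\infty + \lVert\delta\rVert_\infty)$, so its square is $O(\lVert\nu-\nu_h\rVert_\infty^2 + \lVert\delta\rVert_\infty)$ after absorbing the cross term by Young's inequality.

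Putting these together gives $\lvert AY\rvert^2 = 1 + O(\rho)$ uniformly over unit $Y \in T_yM$ and over $x \in M_h$; since $\lvert AY\rvert^2 = \langle A^\ast A\,Y, Y\rangle$ with $A^\ast A$ self-adjoint, all eigenvalues of $A^\ast A$ lie in $[1 - C\rho, 1 + C\rho]$, whence $\lvert 1 - \alpha_i\rvert \le \max\{1 - \sqrt{1 - C\rho},\ \sqrt{1+C\rho} - 1\} \le C\rho$ once $h$ is small enough that $C\rho < 1$ — which is also what guarantees that $A$ is invertible, so that $\varphi_h$ genuinely has $m$ positive singular values, and is where the transversality hypothesis enters. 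The main obstacle is bookkeeping rather than anything deep: one has to keep careful track of which Euclidean subspace each of $P_h$, $S$, $I + \delta S$ acts on, remember that $\nu$, $S$, $\delta$ are naturally functions on the tubular neighborhood $U$ restricted to $M_h$, and check that ``sufficiently small $h$'' simultaneously forces $M_h$ into a fixed compact neighborhood on which $S$ is bounded and keeps $\lVert\nu - \nu_h\rVert_\infty < 1$. With those points settled, the estimates above are elementary, and feeding the resulting bound on $\lvert 1 - \alpha_i\rvert$ into \eqref{eqn:jacobianMax} finishes the proof.
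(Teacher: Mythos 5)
Your proposal is correct and follows essentially the same route as the paper's proof: bound $\bigl\lvert 1 - \lvert P_h (I+\delta S) Y \rvert^2 \bigr\rvert$ for unit $Y \in T_yM$ by splitting off the $(I+\delta S)$ part (controlled by $\lVert \delta \rVert_\infty$ times a curvature bound) from the projection part, where orthogonality to $\nu$ yields the quadratic dependence on $\lVert \nu - \nu_h \rVert_\infty$, and then feed the resulting singular-value bound into \eqref{eqn:jacobianMax}. The only cosmetic differences are that the paper bounds the eigenvalues of $\delta S$ via the parallel-surface principal-curvature formula rather than plain compactness, and exploits $a^\ast Y \perp \nu$ (writing $\nu_h \cdot a^\ast Y = P\nu_h \cdot a^\ast Y$) instead of your $\langle Y, \nu \rangle = 0$ plus a harmless extra $O(\lVert \delta \rVert_\infty^2)$ term.
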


\begin{proof}
  To obtain bounds on $ Y _h = P _h a ^\ast Y $, and hence on the
  singular values, suppose without loss of generality that $
  \left\lvert Y \right\rvert = 1 $.  By the triangle inequality,
\begin{equation*}
  \left\lvert 1 - \left\lvert Y _h \right\rvert ^2 \right\rvert \leq
  \left\lvert 1 - \left\lvert a ^\ast Y \right\rvert ^2 \right\rvert +
  \left\lvert \left\lvert a ^\ast Y \right\rvert ^2 - \left\lvert Y _h
    \right\rvert ^2 \right\rvert .
\end{equation*}
For the first term, the eigenvalues of the shape operator are the
principal curvatures $ \kappa _1 \left( x \right) , \ldots, \kappa _m
(x) $ for a surface parallel to $M$ at $x$; as noted in
\citet{DeDz2007,Demlow2009}, these are related to the principal
curvatures at $ a (x) \in M $ by
\begin{equation*}
  \kappa _i (x) = \frac{ \kappa _i \left( a (x) \right) }{ 1 + \delta
    (x) \kappa _i \left( a (x) \right) } .
\end{equation*}
It follows that the eigenvalues of $ \delta S $ at $x$ can be
estimated by
\begin{equation*}
  \left\lvert \frac{ \delta (x) \kappa _i \left( a (x) \right) }{ 1 + \delta
      (x) \kappa _i \left( a (x) \right) } \right\rvert = \left\lvert
    1 - \frac{ 1 }{ 1 + \delta (x) \kappa _i \left( a (x) \right) }
  \right\rvert \leq C \left\lvert \delta (x) \right\rvert .
\end{equation*}
Since $ a ^\ast Y = \left( I + \delta S \right) Y $ and $ \left\lvert
  Y \right\rvert = 1 $, this immediately implies
\begin{equation*}
  \left\lvert 1 - \left\lvert a ^\ast Y \right\rvert ^2 \right\rvert
  \leq C \left\lvert \delta \right\rvert .
\end{equation*}
For the remaining term, observe that since $ Y _h = P _h a ^\ast Y $,
\begin{equation*}
  \left\lvert Y _h \right\rvert ^2 = \left\lvert a ^\ast Y - \nu _h
    \left( \nu _h \cdot a ^\ast Y \right) \right\rvert ^2 =
  \left\lvert a ^\ast Y \right\rvert ^2 - \left( \nu _h \cdot a ^\ast
    Y \right) ^2 ,
\end{equation*}
and therefore
\begin{equation*}
  \left\lvert \left\lvert a ^\ast Y \right\rvert ^2 - \left\lvert Y _h
    \right\rvert ^2 \right\rvert = \left( \nu _h \cdot a ^\ast Y
  \right) ^2 = \left( P \nu _h \cdot a ^\ast Y \right) ^2 \leq
  \left\lvert P \nu _h \right\rvert ^2 \left\lvert a ^\ast Y
  \right\rvert ^2 .
\end{equation*}
Now,
\begin{equation*}
  \left\lvert P \nu _h \right\rvert ^2 = \left\lvert \nu _h - \nu
    \left( \nu \cdot \nu _h \right) \right\rvert ^2 = 1 - \left( \nu
    \cdot \nu _h \right) ^2 = \left( 1 + \nu \cdot \nu _h \right)
  \left( 1 - \nu \cdot \nu _h \right) \leq 2 \left( 1 - \nu \cdot \nu
    _h \right),
\end{equation*}
and since $ 2 \left( 1 - \nu \cdot \nu _h \right) = \left\lvert \nu -
  \nu _h \right\rvert ^2 $, it follows that
\begin{equation*}
  \left\lvert \left\lvert a ^\ast Y \right\rvert ^2 - \left\lvert Y _h
    \right\rvert ^2 \right\rvert \leq \left\lvert \nu - \nu _h
  \right\rvert ^2 \left\lvert a ^\ast Y \right\rvert ^2 \leq \left\lvert
    \nu - \nu _h \right\rvert ^2 \left( 1 + \left\lvert 1 -
      \left\lvert a ^\ast Y \right\rvert ^2 \right\rvert \right) \leq C
  \left\lvert \nu - \nu _h \right\rvert ^2 .
\end{equation*}
Putting these together, we have
\begin{equation*}
  \left\lvert 1 - \left\lvert Y _h \right\rvert ^2 \right\rvert \leq C
  \left( \left\lvert \delta \right\rvert + \left\lvert \nu - \nu _h
      \right\rvert ^2 \right) ,
\end{equation*}
from which it follows that at each $ x \in M _h $, the singular values
satisfy
\begin{equation*}
  \left\lvert 1 - \alpha _i \right\rvert \leq C \left( \left\lvert
      \delta \right\rvert + \left\lvert \nu - \nu _h \right\rvert ^2
  \right) , \quad i = 1 , \ldots, m .
\end{equation*}
Finally, applying \eqref{eqn:jacobianMax}, we obtain the uniform bound
\begin{equation*}
  \left\lVert I - J _h \right\rVert \leq C \left( \left\lVert \delta
    \right\rVert _\infty + \left\lVert \nu - \nu _h \right\rVert
    _\infty ^2 \right) ,
\end{equation*}
which completes the proof.
\end{proof}

We now apply this theory to an important class of examples, where $ M
_h $ corresponds to a family of piecewise-linear triangulations (as in
\citet{Dziuk1988,DeDz2007}), or more generally, to the family of
approximate surfaces obtained by degree-$s$ Lagrange interpolation
over each element of the triangulation (as in \citet{Demlow2009}),
where the piecewise-linear case corresponds to $ s = 1 $.  Here, the
elements of this triangulation are assumed to be ``shape-regular and
quasi-uniform of diameter $h$'' \citep{Demlow2009}.  Note that $ M _h
$ is always constructed from an underlying piecewise-linear
triangulation, even in the case of higher-order polynomial
interpolation.  Thus, by \autoref{cor:inducedfeec}, we can define the
$ \mathcal{P} _r ^- $ and $ \mathcal{P} _r $ families of finite
element differential forms on $ M _h $, and obtain bounded cochain
projections, even when $ s > 1 $.

By \citet[Proposition 2.3]{Demlow2009}, for sufficiently small $h$,
the surfaces $ M _h $ obtained by degree-$s$ Lagrange interpolation
satisfy
\begin{equation}
  \label{eqn:demlowBounds}
  \left\lVert \delta \right\rVert _\infty \leq C h ^{ s + 1 } , \qquad
  \left\lVert \nu - \nu _h \right\rVert _\infty \leq C h ^s .
\end{equation}
Therefore, we obtain the following corollary to
\autoref{thm:jacobianBound}.

\begin{corollary}
  \label{cor:surfaceApprox}
  If $ M _h $ is a family of surfaces approximating $M$, obtained by
  degree-$s$ Lagrange interpolation, then $ \left\lVert I - J _h
  \right\rVert \leq C h ^{ s + 1 } $.
\end{corollary}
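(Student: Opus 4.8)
The plan is to obtain the estimate by simply feeding the interpolation bounds \eqref{eqn:demlowBounds} of \citet{Demlow2009} into \autoref{thm:jacobianBound}. First I would verify the hypotheses of \autoref{thm:jacobianBound}: the bounds \eqref{eqn:demlowBounds} state that for sufficiently small $h$ the degree-$s$ Lagrange interpolated surfaces satisfy $\left\lVert \delta \right\rVert _\infty \leq C h ^{ s + 1 }$ and $\left\lVert \nu - \nu _h \right\rVert _\infty \leq C h ^s$, and since $s \geq 1$ both quantities tend to zero as $h \rightarrow 0$. Hence for $h$ small enough $M_h$ lies inside the tubular neighborhood $U$ and is transverse to its fibers, so $\varphi _h = a \rvert _{ M _h }$ is a well-defined diffeomorphism and \autoref{thm:jacobianBound} applies.

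Applying that theorem then gives, for sufficiently small $h$,
\begin{equation*}
  \left\lVert I - J _h \right\rVert \leq C \left( \left\lVert \delta \right\rVert _\infty + \left\lVert \nu - \nu _h \right\rVert _\infty ^2 \right) \leq C \left( h ^{ s + 1 } + h ^{ 2s } \right) .
\end{equation*}
The only remaining point is to absorb the $h^{2s}$ contribution into $h^{s+1}$. Since the interpolation degree satisfies $s \geq 1$, we have $2s - (s+1) = s - 1 \geq 0$, so $h ^{ 2s } \leq h ^{ s + 1 }$ whenever $h \leq 1$; therefore $h ^{ s + 1 } + h ^{ 2s } \leq 2 h ^{ s + 1 }$, and the claimed bound $\left\lVert I - J _h \right\rVert \leq C h ^{ s + 1 }$ follows. (Note that this bound is uniform over all form degrees $k = 0, \ldots, m$, since the estimate \eqref{eqn:jacobianMax} underlying \autoref{thm:jacobianBound} is.)

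I do not expect any genuine obstacle here: the corollary is a direct specialization of \autoref{thm:jacobianBound}, all of the geometric work — estimating the singular values of the normal projection $\varphi_h$ in terms of $\delta$ and $\nu - \nu_h$ via the identity $a^\ast Y = (I + \delta S)Y$ and the curvature relations — having already been carried out there, and the interpolation estimates being quoted verbatim from \citet[Proposition 2.3]{Demlow2009}. The one thing to be careful about is the exponent arithmetic $2s \geq s+1$, which holds precisely because $s \geq 1$, the piecewise-linear case $s = 1$ being the borderline case in which $h^{2s}$ and $h^{s+1}$ agree.
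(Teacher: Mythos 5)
Your proposal is correct and is essentially the paper's own proof: both simply substitute the Demlow bounds \eqref{eqn:demlowBounds} into \autoref{thm:jacobianBound} and absorb $h^{2s}$ into $h^{s+1}$ using $s \geq 1$. Your added remarks on verifying the tubular-neighborhood hypothesis and the borderline case $s=1$ are fine but not a different argument.
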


\begin{proof}
  Applying \eqref{eqn:demlowBounds}, we have 
  \begin{equation*}
    \left\lVert I - J _h \right\rVert \leq C \left( \left\lVert \delta
      \right\rVert _\infty + \left\lVert \nu - \nu _h \right\rVert
      _\infty ^2 \right) \leq C h ^{ s + 1 } + C h ^{ 2 s }
    \leq C h ^{ s + 1 } ,
  \end{equation*} 
  which completes the proof.
\end{proof}

This result generalizes \citet[Proposition 4.1]{Demlow2009}---which
applies only to scalar functions ($ k = 0 $) on hypersurfaces of
dimension $ m = 2, 3 $---to hold for arbitrary $k$-forms, $ k = 0,
\ldots, m $, on hypersurfaces of any dimension.  In particular, the
special case $ k = 0 $, $ m = 2 $, $ s = 1 $, gives $ \left\lVert I -
  J _h \right\rVert \leq C h ^2 $, which recovers the original
estimate of \citet{Dziuk1988} for piecewise-linear triangulation of
surfaces in $\mathbb{R}^3$.  The correspondence between this
framework, and that of Dziuk and Demlow, will be made explicit in the
following worked example.

\begin{example}[The Hodge--Laplace operator on a 2-{D} surface]
Let $M$ be a closed, two-dimensional surface, embedded in
$\mathbb{R}^3$, and suppose the approximate surface $ M _h $ is
obtained from degree-$s$ Lagrange interpolation over a
piecewise-linear triangulation $ \mathcal{T} _h $.  Assume that $
\mathcal{T} _h $ is contained in a tubular neighborhood of $M$, that
its vertices lie on $M$, and that its triangles are shape-regular and
quasi-uniform of diameter $h$.

  Take the continuous Hilbert complex to be the $ L ^2 $-de~Rham
  complex on $M$, i.e., $ W = L ^2 \Omega (M) $ and $ V = H \Omega (M)
  $.  Since $ \mathcal{T} _h $ is piecewise-linear and simplicial, we
  can take the discrete complex to be any of those considered in
  \citet{ArFaWi2006,ArFaWi2010}.  For this example, let us take $ V _h
  ^k $ to be the space of $ \mathcal{P} _r $ $k$-forms, and $ V _h ^{
    k - 1 } $ to be the space of $ \mathcal{P} _{r+1} $ $(k-1)$-forms.
  We emphasize that the fact that $ \mathcal{T} _h $ is a surface
  embedded in $\mathbb{R}^3$, rather than a flat region in
  $\mathbb{R}^2$, does not introduce any additional complications as
  far as the discrete complex is concerned.  Indeed, the shape
  functions are defined with respect to a two-dimensional reference
  triangle, and this reference triangle can be mapped onto a triangle
  embedded in $\mathbb{R}^3$ just as easily as one in $\mathbb{R}^2$.
  These shape functions can, likewise, be lifted up from $ \mathcal{T}
  _h $ to the curved triangles on the interpolated surface $ M _h $.
  For nodal Lagrange finite elements ($k=0$), this observation was
  made by \citet{Dziuk1988} in the piecewise linear case, leading to
  the development of surface finite elements, while \citet{Demlow2009}
  later extended this argument to higher-order Lagrange polynomials.
  (Similar ideas had also been used in the development of
  isoparametric finite elements for Euclidean domains with curved
  boundaries.)

  Now, given some $ f \in L ^2 \Omega ^k (M) $, we obtain a solution $
  \left( \sigma, u, p \right) $ to the variational problem
  \eqref{eqn:mixedProblem} on $M$.  For the discrete variational
  problem \eqref{eqn:discreteProblem}, we can use the tubular
  neighborhood projection to take $ f _h = a \rvert _{ M _h } ^\ast f
  $, thus obtaining a discrete solution $ \left( \sigma _h , u _h , p
    _h \right) $ on $ M _h $.  The modified discrete solution $ \left(
    \sigma _h ^\prime , u _h ^\prime , p _h ^\prime \right)$---which
  is used only for the analysis, but is not necessary for
  computation---also lives on $ M _h $, while its image $ \left( i _h
    \sigma _h ^\prime , i _h u _h ^\prime , i _h p _h ^\prime \right)
  $ lives on $M$ itself.

  Therefore, assuming sufficient elliptic regularity, the ``improved
  estimates'' of \citet{ArFaWi2006,ArFaWi2010} yield the $ L ^2 $
  estimates for the modified problem,
  \begin{align*}
    \left\lVert u - i _h u _h ^\prime \right\rVert + \left\lVert p - i
      _h p _h ^\prime \right\rVert &\leq C h ^{r+1} \left\lVert f
    \right\rVert _{ H ^{ r - 1 } } , \\
    \left\lVert \mathrm{d} \left( u - i _h u _h ^\prime \right)
    \right\rVert + \left\lVert \sigma - i _h \sigma _h ^\prime
    \right\rVert &\leq C h ^r \left\lVert f \right\rVert _{ H ^{ r -
        1 } }, \\
    \left\lVert \mathrm{d} \left( \sigma - i _h \sigma _h ^\prime
      \right) \right\rVert &\leq C h ^{ r - 1 } \left\lVert f
    \right\rVert _{ H ^{ r -1 } } ,
  \end{align*}
  which can be combined into the single estimate
  \begin{multline*}
    \left\lVert u - i _h u _h ^\prime \right\rVert + \left\lVert p - i
      _h p _h ^\prime \right\rVert + h \left( \left\lVert \mathrm{d}
        \left( u - i _h u _h ^\prime \right) \right\rVert +
      \left\lVert \sigma - i _h \sigma _h ^\prime \right\rVert
    \right) \\
    + h ^2 \left\lVert \mathrm{d} \left( \sigma - i _h \sigma _h
        ^\prime \right) \right\rVert \leq C h ^{ r + 1 } \left\lVert f
    \right\rVert _{ H ^{ r -1 } } .
  \end{multline*} 
  Applying \autoref{cor:surfaceApprox} to account for the surface
  approximation error, we obtain the final error estimate for the
  discrete problem,
  \begin{multline*}
    \left\lVert u - i _h u _h \right\rVert + \left\lVert p - i _h p _h
    \right\rVert + h \left( \left\lVert \mathrm{d} \left( u - i _h u
          _h \right) \right\rVert + \left\lVert \sigma - i _h \sigma
        _h \right\rVert
    \right) \\
    + h ^2 \left\lVert \mathrm{d} \left( \sigma - i _h \sigma _h
      \right) \right\rVert \leq C \left( h ^{ r + 1 } \left\lVert f
      \right\rVert _{ H ^{ r -1 } } + h ^{ s + 1 } \left\lVert f
      \right\rVert \right) .
  \end{multline*} 
  In particular, this implies that choosing \emph{isoparametric
    elements}, with $ r = s $, yields the optimal rate of convergence.

  The case $ k = 0 $ and $ r = s = 1 $ corresponds to the lowest-order
  approximation of the Laplace--Beltrami equation, where $ M _h $ is
  piecewise-linear and $ V _h ^0 $ consists of piecewise-linear ``hat
  functions'' on $M _h $.  In this case, the estimate above becomes
  \begin{equation*}
    \left\lVert u - i _h u _h \right\rVert + \left\lVert p - i _h p _h
    \right\rVert + h \left\lVert \nabla \left( u - i _h u _h
      \right) \right\rVert \leq C h ^2 \left\lVert f \right\rVert ,
  \end{equation*} 
  which precisely recovers the estimate of \citet{Dziuk1988,DeDz2007}.
  More generally, taking $ k = 0 $ with arbitrary $ r $ and $s$, we
  have
  \begin{equation*}
    \left\lVert u - i _h u _h \right\rVert + \left\lVert p - i _h p _h
    \right\rVert + h \left\lVert \nabla \left( u - i _h u _h
      \right) \right\rVert \leq C \left( h ^{ r + 1 } \left\lVert f
      \right\rVert _{ H ^{ r -1 } } + h ^{ s + 1 } \left\lVert f
      \right\rVert \right) ,
  \end{equation*} 
  which agrees with \citet{Demlow2009}.

  On the other hand, we can also extend these estimates to the cases $
  k = 1 $, which corresponds to the mixed formulation of the vector
  Laplacian, and $ k = 2 $, which corresponds to the mixed formulation
  of the scalar Laplacian.  For $ k = 1 $, the estimate for general $
  r $ and $s$ becomes
  \begin{multline*}
    \left\lVert u - i _h u _h \right\rVert + \left\lVert p - i _h p _h
    \right\rVert + h \left( \left\lVert \nabla \times \left( u - i _h
          u _h \right) \right\rVert + \left\lVert \sigma - i _h \sigma
        _h \right\rVert
    \right) \\
    + h ^2 \left\lVert \nabla \left( \sigma - i _h \sigma _h \right)
    \right\rVert \leq C \left( h ^{ r + 1 } \left\lVert f \right\rVert
      _{ H ^{ r -1 } } + h ^{ s + 1 } \left\lVert f \right\rVert
    \right) ,
  \end{multline*}
  while for $ k = 2 $, we obtain
  \begin{multline*}
    \left\lVert u - i _h u _h \right\rVert + \left\lVert p - i _h p _h
    \right\rVert + h \left\lVert \sigma - i _h \sigma _h \right\rVert
    \\
    + h ^2 \left\lVert \nabla \cdot \left( \sigma - i _h \sigma _h
      \right) \right\rVert \leq C \left( h ^{ r + 1 } \left\lVert f
      \right\rVert _{ H ^{ r -1 } } + h ^{ s + 1 } \left\lVert f
      \right\rVert \right) .
  \end{multline*}
\end{example}

\subsection{Other variational crimes}

% [ Mass lumping, boundary conditions, \ldots ]
The variational crimes framework developed in \autoref{sec:crimes} is
quite general, representing a natural extension of the Strang lemmas
from Hilbert spaces to Hilbert complexes.  As such, the standard
``crimes'' that are typically analyzed in Hilbert spaces with the
Strang lemmas may now be analyzed in the setting of Hilbert complexes.
These crimes---including numerical quadrature, approximate
coefficients, approximate boundary data, approximate domains, as well
as isoparametric and other geometric approximations to more complex
domain shapes---can all be represented as an approximate bilinear form
$B_h$, an approximate linear functional $F_h$, and an approximation
subspace $V_h \not\subset V$, as in~\eqref{eqn:generalizedGalerkin}.
In addition, techniques such as \emph{mass-lumping}, which yield a
number of benefits, such as discrete maximum principles and more
efficient evolution algorithms for parabolic equations, are often
analyzed in a similar way, and as such may now be analyzed in Hilbert
complexes through the framework developed in \autoref{sec:crimes}.

\section{Conclusion}

We began the article in \autoref{sec:hilbert} with a review of the
mathematical concepts that play a fundamental role in finite element
exterior calculus, as developed in \citet{ArFaWi2010}; these included
abstract Hilbert complexes and their morphisms, domain complexes,
Hodge decomposition, the Poincar\'{e} inequality, the Hodge Laplacian,
mixed variational problems, and approximation using Hilbert
subcomplexes.  In \autoref{sec:crimes}, we then considered
approximation of a Hilbert complex by a second complex, related to the
first complex through an injective morphism rather than through
subcomplex inclusion.  We developed several key results for this pair
of complexes and the maps between them, and then derived error
estimates for generalized Galerkin-type approximations of solutions to
variational problems using the approximating complex; these estimates
can be viewed as generalizing the results of \citet{ArFaWi2010} to
``external'' approximations.  Our main abstract results are thus
essentially \emph{Strang-type lemmas} for approximating variational
problems in Hilbert complexes.

As an application of the new framework developed in
\autoref{sec:crimes}, we developed a second distinct set of results in
\autoref{sec:diffForms} for the case of the Hodge--de~Rham complex of
differential forms on a compact, oriented Riemannian manifold.  We
first reviewed Hodge--de~Rham theory, and then considered a pair of
Riemannian manifolds related by diffeomorphisms.  We then established
estimates for the maps needed to apply the generalized Hilbert complex
approximation framework from \autoref{sec:crimes}, subsequently
specializing this analysis to the case of a Euclidean hypersurface,
with approximating hypersurfaces living in a tubular neighborhood.
The surface finite element methods, as analyzed in
\citet{Dziuk1988,DeDz2007,Demlow2009}, fit precisely into this class
of approximation problems; as such, we illustrated how our results
recover the analysis framework and \emph{a priori} estimates of
\citet{Dziuk1988,DeDz2007,Demlow2009}, and also extend their results
from scalar functions on 2- and 3-surfaces to general $k$-forms on
arbitrary dimensional hypersurfaces.  Our results also generalize
those earlier estimates from nodal finite element methods for the
Laplace--Beltrami operator to mixed finite element methods for the
Hodge Laplacian.  By analyzing surface finite element methods using a
combination of general tools from differential geometry and functional
analysis, we are led to a more geometric analysis of surface finite
element methods, whereby the main results become more transparent.

There remain a number of interesting and challenging problems that
were not addressed in the current article.  One such problem is the
extension of the pointwise error estimates of \citet{Demlow2009} for
$0$-forms to general $k$-forms; this analysis relies on known results
for the Green's function of the Laplace--Beltrami operator on the
continuous surface (cf.~\citep{Aubin1982}), and analogous results
would be needed for general $k$-forms.  A second problem of interest
is an extension of the Hilbert complex framework to more general
Banach complexes, as would be needed to handle some nonlinear
problems.  This leads to a third interesting problem, which would
involve the extension of the weak-$*$ convergence and contraction
frameworks, used for adaptive finite element methods for
linear~\citep{CaKrNoSi2008,MoSiVe2008} and
nonlinear~\citep{HoTsZh2010} problems, to the setting of finite
element exterior calculus, as well as to the surface finite element
setting.

\section*{Acknowledgments}

  MH was supported in part by NSF DMS/CM Awards 0715146 and 0915220,
  NSF MRI Award 0821816, NSF PHY/PFC Award 0822283, and by DOD/DTRA
  Award HDTRA-09-1-0036.

  AS was supported in part by NSF DMS/CM Award 0715146 and by NSF
  PHY/PFC Award 0822283, as well as by NIH, HHMI, CTBP, and NBCR.

% \bibliographystyle{sternurl}
% \bibliography{HoSt2010}

\end{document}